\documentclass[svgnames]{amsart}
\usepackage[english]{babel}
\usepackage[utf8]{inputenc}

\usepackage[margin=3cm]{geometry}

\usepackage[colorlinks=true, pdfstartview=FitV, linkcolor=black, citecolor=black, urlcolor=blue]{hyperref}
\usepackage[nameinlink]{cleveref}

\usepackage{amssymb, amsfonts, amsmath, amsthm, amscd, amsxtra, latexsym, mathabx, mathtools, enumitem}
\usepackage{tikz}\usetikzlibrary{cd}
\usepackage{epigraph}

\theoremstyle{plain}
\newtheorem{thm}{Theorem}[section] 
\newtheorem{lem}[thm]{Lemma} 
\AddToHook{env/lem/begin}{\crefalias{thm}{lem}}
\newtheorem{claim}[thm]{Claim}
\AddToHook{env/claim/begin}{\crefalias{thm}{claim}}
\newenvironment{claimproof}{\begin{proof}}{\end{proof}}
\newtheorem{prop}[thm]{Proposition}
\AddToHook{env/prop/begin}{\crefalias{thm}{prop}}
\newtheorem{cor}[thm]{Corollary}
\AddToHook{env/cor/begin}{\crefalias{thm}{cor}}

\newtheorem{thmintro}{Theorem}

\newtheorem{corintro}[thmintro]{Corollary}
\AddToHook{env/corintro/begin}{\crefalias{thmintro}{corintro}}
\newtheorem{questintro}[thmintro]{Question}
\AddToHook{env/questintro/begin}{\crefalias{thmintro}{questintro}}

\theoremstyle{definition}
\newtheorem{defn}[thm]{Definition}
\AddToHook{env/defn/begin}{\crefalias{thm}{defn}}

\AddToHook{env/ass/begin}{\crefalias{thm}{ass}}
\newtheorem{notation}[thm]{Notation}
\AddToHook{env/notation/begin}{\crefalias{thm}{notation}}
\newtheorem{rem}[thm]{Remark}
\AddToHook{env/rem/begin}{\crefalias{thm}{rem}}
\newtheorem{ex}[thm]{Example}
\AddToHook{env/ex/begin}{\crefalias{thm}{ex}}

\AddToHook{env/construction/begin}{\crefalias{thm}{construction}}

\AddToHook{env/remintro/begin}{\crefalias{thmintro}{remintro}}

\newcounter{gcomments}

\newcounter{ecomments}

\newcommand{\Aut}[1]{\operatorname{Aut}\left(#1\right)}
\newcommand{\AutH}[2]{\operatorname{Aut}_{#2}\left(#1\right)}
\newcommand{\AutbH}[2]{\operatorname{Aut}_{#2}^\partial\left(#1\right)}
\newcommand{\Out}[1]{\operatorname{Out}\left(#1\right)}
\newcommand{\Outstar}[1]{\operatorname{Out}^\star\left(#1\right)}
\newcommand{\TOut}[1]{\widetilde{\operatorname{Out}}\left(#1\right)}
\newcommand{\Outtwo}[1]{\operatorname{Out}_2\left(#1\right)}
\newcommand{\Inn}[1]{\operatorname{Inn}\left(#1\right)}

\newcommand{\piorb}[1]{\pi_1^{\text{orb}}\left(#1\right)}
\newcommand{\MCG}[1]{\operatorname{MCG}\left(#1\right)}
\newcommand{\EMCG}[1]{\operatorname{MCG}^\pm\left(#1\right)}

\newcommand{\EPMCG}[1]{\operatorname{PMCG}^\pm\left(#1\right)}
\newcommand{\PMCG}[1]{\operatorname{PMCG}\left(#1\right)}
\newcommand{\MCGb}[1]{\operatorname{MCG}^\partial\left(#1\right)}

\newcommand{\frakS}{\mathfrak S}
\newcommand{\C}[1]{\mathcal{C}(#1)}
\newcommand{\propnest}{\sqsubsetneq}

\newcommand{\trans}{\pitchfork}
\newcommand{\nest}{\sqsubseteq}

\newcommand{\N}{\mathbb{N}}
\newcommand{\Z}{\mathbb{Z}}
\newcommand{\R}{\mathbb{R}}
\newcommand{\Hyp}{\mathbb{H}^2}

\newcommand{\dist}{\mathrm{d}}
\newcommand{\diam}{\mathrm{diam}}
\newcommand{\Stab}[2]{\operatorname{Stab}_{#1}\left(#2\right)}

\title[Outer automorphism groups, bounded extensions, and HHG]{Outer automorphism groups of hyperbolic groups, bounded extensions, and hierarchical hyperbolicity}

\author[E. Had\v{z}iosmanovi\'{c}]{Ervin Had\v{z}iosmanovi\'{c}}
    \address{(Ervin Had\v{z}iosmanovi\'{c}) Scuola Normale Superiore, Pisa, Italy}
    \email{ervin.hadziosmanovic@sns.it}

\author[G. Mangioni]{Giorgio Mangioni}
    \address{(Giorgio Mangioni) Maxwell Institute and Department of Mathematics, Heriot-Watt University, Edinburgh, UK}
    \email{gm2070@hw.ac.uk}

\begin{document}
\begin{abstract}
    We prove that the outer automorphism group of a one-ended hyperbolic group is virtually a hierarchically hyperbolic group (HHG), under mild orientability conditions on the associated JSJ decomposition. This is done by proving that a finite-index subgroup is a central extension of a product of orbifold mapping class groups, and the extension has bounded Euler class. Our theorem is sharp: we exhibit a surface amalgam whose fundamental group has full outer automorphism group which is not a HHG. 
\end{abstract}
\maketitle

\epigraph{This terminology should not be blamed on me. It was obtained by a democratic process in my course of 1976–77. An orbifold is something with many folds; unfortunately, the word ``manifold" already has a different definition. I tried ``foldamani", which was quickly displaced by the suggestion of ``manifolded". After two months of patiently saying ``no, not a manifold, a manifold\emph{ed}," we held a vote, and ``orbifold" won.}{William Thurston, \emph{The geometry and topology of three-manifolds}}

\section*{Introduction}
Outer automorphism groups of hyperbolic groups are of utmost importance in combinatorial and geometric group theory. One of the many reasons is that this class includes both $\Out{\mathbb{F}_n}$ and mapping class groups of closed surfaces, which are ubiquitous in the study of low-dimensional topology. Furthermore, a key step in the solution of the isomorphism problem for hyperbolic groups (see, among others, \cite{Rips_Sela,Sela,DahmaniGuirardel_Isoprob}) was a description of the outer automorphism group of a one-ended hyperbolic group $G$ in terms of certain virtually Abelian extensions of mapping class groups. We improve this description by showing that $\Out{G}$ is virtually a \emph{bounded} central extension:

\begin{thmintro}[See \Cref{thm:M}]\label{thmintro:virtual_bounded}
    Let $G$ be a one-ended hyperbolic group. Then $\Out{G}$ is virtually a direct product $\Z^q\times M$, where $q\ge 0$ and $M$ fits inside a bounded central extension 
    \[1\to Z_s \to M\to \prod \PMCG{G_v}\to 1.\]
    Here $Z_s$ is finitely generated Abelian, the product has finitely many factors, and each $\PMCG{G_v}$ is a finite extension of a finite-index subgroup of the mapping class group of a hyperbolic orbifold without mirrors.
\end{thmintro}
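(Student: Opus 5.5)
The plan rests on the Levitt/Guirardel--Levitt description of $\Out{G}$ through the canonical JSJ decomposition $\Gamma$ of $G$ over virtually cyclic subgroups.

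\textbf{Reduction to a finite-index subgroup.} $\Out{G}$ acts on the JSJ tree, hence on the finite graph $\Gamma$. The first step is to pass to the finite-index subgroup $\Out^0(G)$ acting trivially on $\Gamma$ and preserving orientations of edge groups. This is where the mirror-free and orientability bookkeeping enters.

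\textbf{The exact sequence for $\Out^0(G)$.} By Levitt's theorem, $\Out^0(G)$ fits into
\[1\to \mathcal T\to \Out^0(G)\to \prod_v \PMCG{G_v}\to 1,\]
where:
\begin{itemize}
\item $v$ ranges over the quadratically hanging (orbifold) vertices;
\item $\PMCG{G_v}$ is the subgroup of outer automorphisms of $G_v$ fixing the conjugacy classes of the peripheral subgroups;
\item $\mathcal T$ is the group of twists, which is virtually $\Z^r$ and generated by Dehn twists around edges.
\end{itemize}
Rigid vertices contribute only finite groups, and these are absorbed by passing to finite index. Each $\PMCG{G_v}$ is, up to finite extension, a finite-index subgroup of the mapping class group of the underlying orbifold. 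After passing to a further finite-index subgroup, $\mathcal T$ is free abelian and central: a pure mapping class preserves each boundary class, so it commutes with the corresponding twist.

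\textbf{Splitting off $\Z^q$.} Next I split $\mathcal T$ into two parts:
\begin{itemize}
\item twists along edges both of whose endpoints are non-QH, or QH vertices whose boundary twist dies in the mapping class group;
\item twists appearing as boundary twists of orbifold vertices.
\end{itemize}
I then choose a section splitting off a free abelian factor $\Z^q$ generated by twists whose extension class vanishes. This gives a finite-index subgroup isomorphic to $\Z^q\times M$, with $1\to Z_s\to M\to\prod\PMCG{G_v}\to1$.

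\textbf{Boundedness of the Euler class (main obstacle).} The extension class of $M$ is a sum of pullbacks of classes from each factor $\PMCG{G_v}$. For each orbifold vertex, the relevant extension is (a quotient of) the mapping class group of the orbifold with boundary components, rel boundary, mapping to the mapping class group with punctures. The kernel is generated by the boundary twists. Its class is the sum of the Euler classes associated with each boundary component, namely the action on the circle at infinity of the lift to the universal cover that fixes the boundary geodesic.

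I would show these classes are bounded via Milnor--Wood. Lifting the action of $\PMCG{G_v}$ on $\partial\Hyp\cong S^1$ gives an action by homeomorphisms of $\R$ commuting with the deck translation. The associated cocycle is defined using translation numbers, which are uniformly bounded. Orbifold points cause no trouble, since one can pass to a finite-index torsion-free surface subgroup or work directly with the Fuchsian action.

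Finally, a product of bounded classes, pulled back along projections and pushed forward to $Z_s$, stays bounded. For a finitely generated abelian $Z_s$, boundedness means each real coordinate is bounded, and any finite torsion part is bounded automatically.
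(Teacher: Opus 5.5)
\textbf{Main gap: the choice of splitting.} You work with Bowditch's canonical JSJ decomposition over all virtually cyclic subgroups. You then say the mirror-free bookkeeping happens when passing to the finite-index subgroup acting trivially on $\Gamma$. That step cannot do it, because passing to a finite-index subgroup of $\Out{G}$ does not change the vertex groups. The quadratically hanging vertices of Bowditch's JSJ are in general finite extensions of Fuchsian groups containing reflections. These are orbifolds with mirrors, corner reflectors, and boundary segments whose peripheral groups are dihedral-like with finite centre. For such a vertex your $\PMCG{G_v}$ is a mapping class group of an orbifold \emph{with} mirrors, so the clause ``without mirrors'' is not established. Relating it to a mirror-free orbifold requires cutting along the boundary of a regular neighbourhood of the mirrors and boundary segments, which is a change of splitting, not a passage to finite index.

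This is exactly what the paper supplies in \Cref{thm:JSJ}. Following Dahmani--Guirardel, one makes this cut, collapses edges whose stabilisers have finite centre, and passes to the tree of cylinders. The resulting $\mathcal Z$-JSJ tree of cylinders is still canonical, and all its quadratically hanging vertices are of conical type. Its edge groups are also maximal virtually cyclic in the adjacent non-elementary vertex groups (\Cref{jsj:centralisers}). That gives $Z_{G_v}(G_e)\le G_e$, which is what actually makes the twists central when $G$ has torsion. Your reason, that a pure mapping class commutes with the boundary twist, is only the surface intuition.

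\textbf{Further repairs.} First, $G_v$ is only a finite extension of $Q_v$, so $\MCGb{G_v}\to\PMCG{G_v}$ is not ``a quotient of'' the orbifold capping extension. Instead it maps to $\MCGb{Q_v}\to\PMCG{Q_v}$, with $\prod_i Z(G_{e_i})/Z(G_v)$ going to $\prod_i H_i$ with finite kernel and finite-index image. Boundedness therefore has to be pulled back. One way is to compose a quasihomomorphism $\MCGb{Q_v}\to\prod_i H_i$ with homomorphic sections $H_i\to G_{e_i}$ and make a bounded modification, as in \Cref{cor:MCG_seq_bounded_V2}.

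Second, your Milnor--Wood argument is a genuinely different route from the paper's, which uses Busemann quasimorphisms on annular curve graphs; the paper itself mentions the Euler-class route as an alternative. As written, however, the circle is wrong. $\PMCG{O'}$ does not act on $\partial\Hyp$, because lifts preserving a boundary geodesic $\ell_i$ are only defined up to powers of the loxodromic $h_i$ along $\ell_i$. What works is the following:
\begin{itemize}
\item Let $\MCG{O}$ act, through the lifts fixing $\ell_i$ pointwise, on the limit set minus the endpoints of $\ell_i$. Only this action is canonical.
\item Collapse the complementary gaps to get a line on which $h_i$ acts as a translation.
\item Check that $T_i$ acts there as $h_i^{\pm1}$ while the other $T_j$ act trivially.
\end{itemize}
Then the $i$-th coordinate of the class is pulled back from $\widetilde{\mathrm{Homeo}}^+(S^1)\to\mathrm{Homeo}^+(S^1)$. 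The translation number then supplies the quasimorphism required by \Cref{bounded via qm:Abelian_ker}, a close cousin of the paper's annular quasimorphism.

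Finally, define $M=\lambda\bigl(\prod_{v\in V_2}\MCGb{G_v}\bigr)$ via Levitt's extension map, as the paper does. This makes your ``pushed forward to $Z_s$'' literal (\Cref{cor:quot_of_bounded_by_kernel}), and a complement to the saturation of $Z_s$ in the twist group then gives the $\Z^q$ factor. Your description of which twists to split off does not parse as stated, since boundary twists never die under capping.
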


\noindent Let us briefly explain the above terminology. We say that a group $G$ is a \emph{finite extension} of a group $H$ if $G$ surjects onto $H$ with finite kernel. Furthermore, a central extension $1\to K\to G\to H\to 1$ is \emph{bounded} if the associated Euler class $[G]\in H^2(H,K)$ is represented by a cocycle $\omega \colon H\times H\to K$ taking bounded values; see \Cref{bounded via qm:Abelian_ker} for an equivalent characterisation in terms of \emph{quasimorphisms}. Finally, we refer to \Cref{subsec:Orbifolds} below for more details on orbifolds. For experts, the product is over all \emph{quadratically hanging subgroups} $G_v$ appearing in the \emph{JSJ tree of cylinders} over the class $\mathcal Z$ of virtually $\Z$ subgroups with infinite center: see \Cref{subsec:jsj} and the outline below.

If we further assume that $G$ is torsion-free, then all orbifolds are genuine surfaces, and all finite extensions are trivial. Hence in this case \Cref{thmintro:virtual_bounded} has a simpler form: 

\begin{corintro}
    Let $G$ be a one-ended, torsion-free hyperbolic group. Then $\Out{G}$ is virtually a bounded central extension of a direct product of mapping class groups of (possibly non-orientable) hyperbolic surfaces.
\end{corintro}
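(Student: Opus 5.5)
The corollary should follow from \Cref{thmintro:virtual_bounded} once we check that, for torsion-free $G$, every qualifier in that statement disappears. The plan has three steps.

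\textbf{Step 1: apply the theorem.} Applying \Cref{thmintro:virtual_bounded}, a finite-index subgroup of $\Out{G}$ is isomorphic to $\Z^q\times M$. Here $M$ is a bounded central extension
\[1\to Z_s\to M\to \prod_v \PMCG{G_v}\to 1.\]

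\textbf{Step 2: the factors are genuine mapping class groups.} Since $G$ is torsion-free, each quadratically hanging vertex group $G_v$ is torsion-free. So the associated hyperbolic orbifold has no cone points or mirrors, and is a compact hyperbolic surface $\Sigma_v$, possibly non-orientable, with boundary.

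The finite kernel of $\PMCG{G_v}\to$ (finite-index subgroup of $\MCG{\Sigma_v}$) comes from finite-order data: torsion in the vertex group, or in the edge groups attached to it. For torsion-free $G$ I expect this kernel to be trivial. This is the step where one must look into the definition of $\PMCG{G_v}$ in \Cref{subsec:Orbifolds} and \Cref{subsec:jsj}; I expect it to be immediate, as the introduction asserts.

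So each factor is a finite-index subgroup $H_v$ of a mapping class group. The extension $M$ then maps onto $\prod_v H_v$, which has finite index in $\prod_v \MCG{\Sigma_v}$.

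\textbf{Step 3: absorb the $\Z^q$ factor.} Set
\[E=\Z^q\times M\to \prod_v H_v,\]
projecting away the $\Z^q$ factor. The kernel is $\Z^q\times Z_s$, which is central and finitely generated abelian.

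To see that the extension is bounded, take a bounded cocycle $\omega$ representing $[M]$, obtained from a bounded set-theoretic section $\sigma$. Use $(0,\sigma)$ as a section of $E$. Its cocycle is $(0,\omega)$, which is still bounded. Equivalently, one can use the quasimorphism characterisation in \Cref{bounded via qm:Abelian_ker}.

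\textbf{Remaining issue: finite index versus full mapping class group.} The quotient is a finite-index subgroup of a product of mapping class groups, not the full product. If the corollary's phrasing requires literally a product of mapping class groups, there are two options.
\begin{itemize}
\item Read ``mapping class groups'' up to finite index, which the word ``virtually'' permits. The extension can then be stated over finite-index subgroups.
\item Note that Step 2 already yields the product of mapping class groups as in the theorem's form.
\end{itemize}
I would adopt the first reading, which is the natural interpretation.

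The only non-formal point is the triviality of the finite extensions in Step 2.
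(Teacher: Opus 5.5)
Your proposal is correct and follows the paper's route. The corollary is \Cref{thmintro:virtual_bounded} specialised to torsion-free $G$, and your absorption of the $\Z^q$ factor into the central kernel, which the paper leaves implicit, is fine; it also follows directly from \Cref{cor:prod_of_bounded_is_bounded}. The step you only ``expect'' closes at once: each $K_v$ is a finite subgroup of the torsion-free group $G$, hence trivial, so $G_v=Q_v$ is a surface group and the map $\overline\rho$ of \Cref{cor:out_of_QH} is the identity. Then \Cref{lem:surface_case} identifies $\PMCG{G_v}$ with the pure mapping class group $\PMCG{O'_v}$ of the punctured surface.
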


\noindent While the existence of the central extension from \Cref{thmintro:virtual_bounded} follows from a careful inspection of several, often implicit, results in the literature (see, among others, \cite{Sela,Bowditch_JSJ,Levitt_authyp,DahmaniGuirardel_Isoprob}), the fact that the extension is bounded is new. To the eye of a geometric group theorist, one of the main consequences of boundedness is that the extension is \emph{quasi-isometrically trivial}, meaning that it is quasi-isometric to the direct product of the kernel and the quotient \cite{Gersten_B_is_QIT}. In the case where all orbifolds appearing in \Cref{thmintro:virtual_bounded} are \emph{orientable}, we can dramatically improve our understanding of the coarse geometry of $\Out{G}$:

\begin{thmintro}[See \Cref{thm:Out(Gone_end_hyp)_is_HHG}]\label{thmintro:virtual_HHG}
    Let $G$ be a one-ended hyperbolic group, and let $\mathcal Z$ be the family of its virtually $\Z$ subgroups with infinite centre. Suppose that all quadratically hanging subgroups appearing in the $\mathcal Z$-JSJ tree of cylinders of $G$ are orientation-preserving. Then $\Out{G}$ is virtually a hierarchically hyperbolic group.
\end{thmintro}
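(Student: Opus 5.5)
We plan to deduce this from \Cref{thmintro:virtual_bounded} together with three facts: hierarchical hyperbolicity is preserved under direct products and under passing to finite-index subgroups and finite extensions; mapping class groups of orientable hyperbolic orbifolds are HHG; and a bounded central extension of a HHG by a finitely generated Abelian group is again a HHG.

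\textbf{Reduction.} By \Cref{thmintro:virtual_bounded}, after passing to a finite-index subgroup we may write $\Out{G}$ as $\Z^q\times M$, with $M$ a bounded central extension
\[1\to Z_s\to M\to \prod \PMCG{G_v}\to 1.\]
Since $\Z^q$ is a HHG with a single flat-like structure and products of HHG are HHG, it suffices to show that $M$ is virtually a HHG.

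\textbf{The quotient.} By the orientation-preserving hypothesis, each $G_v$ is the orbifold fundamental group of a compact orientable hyperbolic 2-orbifold $\mathcal O_v$ without mirrors. Each $\PMCG{G_v}$ is a finite extension of a finite-index subgroup of $\MCG{\mathcal O_v}$, where boundary components are permuted or fixed as appropriate. The mapping class group of such an orbifold is commensurable with, or a finite quotient-related variant of, the mapping class group of a punctured surface: cone points can be treated as marked punctures, and one passes to the pure subgroup. Hence it is a HHG by Behrstock--Hagen--Sisto. Being a HHG is inherited by finite-index subgroups. For finite extensions $1\to F\to E\to H\to 1$ with $F$ finite, $E$ acts on the HHS structure of $H$ via the quotient map with finite kernel, and this action is still geometric. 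We would verify that the HHG axioms only require the action on the index set to be cofinite and equivariant, so $E$ is a HHG. Finite products of these are HHG.

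\textbf{The central extension (main obstacle).} A bounded central extension of a HHG $H$ by $\Z^k$, or more generally by a finitely generated Abelian group (after killing torsion, which only changes things by a finite extension), is a HHG. The plan is to use the characterisation of bounded extensions from \Cref{bounded via qm:Abelian_ker}, namely a quasimorphism $\phi\colon M\to Z_s\otimes\R$ restricting to the identity on $Z_s$. This yields an $M$-equivariant-up-to-bounded-error quasi-isometry $M\to H\times \R^k$, given by $m\mapsto (\bar m,\phi(m))$.

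To construct the HHG structure on $M$, we take the index set of $H$ and add one new domain $U_{\mathrm{cent}}$, orthogonal to everything, with associated space $\R^k$ (or a product of $k$ lines, each of which is a hyperbolic space). The projection to $U_{\mathrm{cent}}$ is $\phi$. $M$ acts on $\R^k$ by translations $x\mapsto x+\phi(m)$, which is isometric only up to the defect of $\phi$. This is the delicate point. To fix it, we would replace $\phi$ by a homogenised quasimorphism, which is a genuine homogeneous quasimorphism, and note that the HHG axioms tolerate maps that are equivariant up to uniformly bounded error. Alternatively, we can invoke an existing theorem (in the spirit of Hagen--Russell--Sisto--Spriano on central extensions) asserting that a central extension of a HHG by $\Z$ with bounded Euler class is a HHG, and then induct on the rank of $Z_s$. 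The remaining checks are uniqueness, bounded geodesic image, and consistency for the new domain. These are immediate from orthogonality, since the new domain neither nests nor is transverse to any old one, and the realisation axiom follows from the quasi-isometry $M\simeq_{QI} H\times\R^k$.
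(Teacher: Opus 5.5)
Your route is the paper's. You reduce via \Cref{thm:M} to $\Z^q\times M$, show that each $\PMCG{G_v}$ is a HHG as a finite extension of a finite-index subgroup of the mapping class group of an orientable punctured surface, and conclude using closure under products and under bounded central extensions. Your option (b) is exactly how the paper handles the last step: \Cref{lem:HHGpropr}.\eqref{lem:HHGpropr_ext_is_HHG}, i.e. \cite[Theorem 3.12]{FFMS}, already covers arbitrary finitely generated Abelian kernels. Killing torsion and inducting on the rank of $Z_s$ is therefore unnecessary, though harmless, since boundedness passes to the relevant quotients by \Cref{cor:quot_of_bounded_by_kernel}.

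Two small corrections on the quotient. First, the relevant group is the pure mapping class group $\PMCG{O'_v}$ of the punctured orbifold (\Cref{lem:surface_case}, \Cref{cor:MCG_seq_bounded_V2}), not $\MCG{O_v}$ of the compact one; the latter is the middle term of the capping extension. Second, ``commensurable with'' would not suffice, because being a HHG is not a commensurability invariant, which is precisely the point of \Cref{thmintro:not_HHG}. What you need, and what \Cref{lem:just_surface_mcg} gives, is that the orbifold mapping class group is a finite-index \emph{subgroup} of a surface mapping class group.

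Your primary plan (a) for the central extension, however, fails as written. \Cref{defn:HHG} requires genuine isometries $g_W\colon \C{W}\to\C{gW}$ with $(gh)_W=g_{hW}\circ h_W$ and exact equivariance of projections, so on a domain fixed by $M$ you need an honest isometric $M$-action. The map $x\mapsto x+\phi(m)$ is not an action, and homogenising does not repair this: a homogeneous quasimorphism is additive only on commuting pairs.

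In fact no isometric action on $\R$ can work in general. Since $\operatorname{Isom}(\R)$ is solvable, if $M$ is perfect then every isometric action of $M$ on a line is trivial. This happens, for instance, when every $O_v$ is a surface of genus at least $3$, since $M$ is a quotient of $\prod_{v\in V_2}\MCGb{G_v}$. In that case $Z_s$ projects to bounded sets in every domain and the uniqueness axiom fails.

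To rescue (a) you must replace $\R$ by a quasi-line built from $\phi$ that carries an isometric $M$-action with the centre acting loxodromically, for instance a Cayley graph of $M$ with respect to $\{m : |\phi(m)|\le C\}$ for $C$ large. You must also add a new $\nest$-maximal domain and the required containers, since a domain orthogonal to the old maximal element contradicts uniqueness of the maximal element. Your claim that the remaining axioms are ``immediate'' skips all of this. That work is exactly what the theorem in option (b) does, so your proof should rest on that citation.
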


\noindent \emph{Hierarchically hyperbolic groups} (HHGs), as first introduced in \cite{HHS_I}, provide a common framework for mapping class group of surfaces, most cubulated groups, most 3-manifold groups, and more. As such, techniques from low-dimensional topology and the world of CAT(0) cube complexes can be exploited to prove a plethora of properties of (virtual) HHGs; a very non-exhaustive list includes results on their asymptotic dimension \cite{asdim}, quasiflats \cite{quasiflat}, and on being quasi-isometric to CAT(0) cube complexes \cite{petyt}.

We emphasise that the splitting from \Cref{thmintro:virtual_bounded} can be trivial: this happens precisely when $G$ is itself a finite extension of an \emph{orientation-preserving, cocompact Fuchsian group of conical type} (that is, the fundamental group of a closed, orientable, hyperbolic $2$-orbifold). In this case $\Out{G}$ is genuinely hierarchically hyperbolic, without need to pass to a finite-index subgroup:

\begin{thmintro}[See \Cref{thm:Out(Vsurface)_is_HHG}]\label{thmintro:out_cocompact_is_HHG}
     Let $G$ be a finite extension of an orientation-preserving, cocompact Fuchsian group of conical type. Then $\Out{G}$ is a hierarchically hyperbolic group.
\end{thmintro}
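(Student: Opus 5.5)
Let $1\to F\to G\to \Gamma\to 1$ be the defining extension, with $F$ finite and $\Gamma=\pi_1^{\text{orb}}(\mathcal O)$ for a closed, orientable, hyperbolic $2$-orbifold $\mathcal O$ with only cone points. The plan is to show that $\Out{G}$ is a finite extension of a finite-index subgroup of $\EMCG{\mathcal O}$, or more precisely of the extended mapping class group relative to the cone points, with finite kernel. Then we transfer the hierarchical hyperbolicity of orbifold mapping class groups through this finite-kernel, finite-index situation.

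\textbf{Step 1: $F$ is characteristic.} Since $G$ is hyperbolic and $\Gamma$ has no nontrivial finite normal subgroup ($\Gamma$ is non-elementary Fuchsian), $F$ is the unique maximal finite normal subgroup of $G$. Hence $F$ is characteristic, and we get a homomorphism $\Phi\colon \Out{G}\to \Out{\Gamma}$.

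\textbf{Step 2: identify $\Out{\Gamma}$.} By the Dehn--Nielsen--Baer theorem for orbifolds (Zieschang, Maclachlan--Harvey), $\Out{\Gamma}\cong \EMCG{\mathcal O}$, the extended mapping class group of $\mathcal O$ permuting cone points of equal order. This group is commensurable with, in fact it contains with finite index, the pure mapping class group of the underlying punctured surface. Since $\mathcal O$ is orientable, $\EMCG{\mathcal O}$ is virtually $\MCG{S}$ for $S$ the surface obtained by puncturing the cone points.

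\textbf{Step 3: kernel and image of $\Phi$.} The image of $\Phi$ has finite index. Indeed, an automorphism of $\Gamma$ lifts to $G$ whenever it preserves the extension class and the outer action $\Gamma\to\Out{F}$, and both conditions are preserved by a finite-index subgroup, as $\Out{F}$ and $H^2(\Gamma,Z(F))$ are finite. For the kernel: an automorphism inducing an inner automorphism on $\Gamma$ can be composed with an inner automorphism of $G$ so that it induces the identity on $\Gamma$. Such automorphisms are of the form $g\mapsto g\,\delta(g)$, with $\delta$ a $1$-cocycle-like map into $F$, and they are determined by their values on finitely many generators. So the kernel is finite. Thus $\Out{G}$ is a finite extension of a finite-index subgroup of $\EMCG{\mathcal O}$.

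\textbf{Step 4: HHG transfer (main obstacle).} Finite-index subgroups of HHGs are HHGs. The delicate point is lifting an HHG structure along a finite kernel $K$: we need $\Out{G}$ to act cocompactly and with the correct equivariance on the hierarchy. Since the map $\Out{G}\to\Phi(\Out{G})$ is a quasi-isometry with finite fibres, we pull back the hierarchy structure: same index set, same hyperbolic spaces (curve graphs of $S$), and the projections composed with $\Phi$. The kernel acts trivially on the index set and on all spaces, and all HHG axioms are coarse conditions invariant under a quasi-isometry which is equivariant with finite fibres.

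The hardest part is the identification in Step 2 with the correct mapping class group, including possibly orientation-reversing elements. We must check that $\EMCG{\mathcal O}$ itself, not just its orientation-preserving part, is an HHG; this is where orientability of $\mathcal O$ is used. I would handle it by showing that $\EMCG{\mathcal O}$ acts on the curve graph of $S$ and on the subsurface curve graphs. Orientation-reversing elements act fine on these graphs, so the standard Behrstock--Hagen--Sisto structure for $\MCG{S}$ extends equivariantly to the finite extension $\EMCG{S}$. It then restricts to the finite-index subgroup preserving cone orders.
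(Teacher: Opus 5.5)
Your proposal is correct and follows essentially the same route as the paper. The paper's steps are: $F$ is characteristic; $\Out{G}\to\Out{\Gamma}$ has finite kernel and finite-index image (\Cref{cor:out_of_QH}, proved with the same $1$-cocycle and finitely-many-extensions arguments you give); Dehn--Nielsen--Baer identifies $\Out{\Gamma}$ with the orbifold mapping class group, a finite-index subgroup of $\EMCG{S}$; and being an HHG passes to finite-index subgroups and to finite-kernel extensions. The only difference is that you sketch why the structure on $\MCG{S}$ extends equivariantly to $\EMCG{S}$, whereas the paper simply quotes \Cref{thm:mcg_is_HHG} for this.
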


\noindent As the full $\Out{G}$ is quasi-isometric to its finite-index HHG subgroup, it has the structure of a hierarchically hyperbolic \emph{space}, the non-equivariant analogue of a HHG. Unfortunately, being a hierarchically hyperbolic \emph{group} might not pass to finite-index overgroups: in other words, there might not be any equivariant structure (see \cite[Corollary 4.5]{PetytSpriano_Eyries} for an explicit counterexample). This is indeed the case in our setting, even under the assumption that $G$ has no torsion. For a concrete counterexample, let $\Sigma=S_1^1$ be a torus with one open disk removed, and let $K$ be the $2$-complex obtained by gluing three copies $\Sigma_1$, $\Sigma_2$, $\Sigma_3$ along the boundary circle $C$, as in \Cref{fig:three_tori}.

\begin{thmintro}[See \Cref{thm:threesurfaces}]\label{thmintro:not_HHG}
    $\pi_1(K)$ is torsion-free and satisfies the assumptions of \Cref{thmintro:virtual_HHG}, but $\Out{\pi_1(K)}$ is not a HHG.
\end{thmintro}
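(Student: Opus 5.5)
Set $G=\pi_1(K)=\Sigma_1\ast_C\Sigma_2\ast_C\Sigma_3$, amalgamated along the cyclic group $\langle c\rangle=\pi_1(C)$. Each $\pi_1(\Sigma_i)\cong F_2$, and $c$ maps to the commutator $[a_i,b_i]$.

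\textbf{Hypotheses.} Torsion-freeness is immediate, since $K$ is an aspherical $2$-complex. For one-endedness and hyperbolicity:
\begin{itemize}
\item Each vertex group is free, and $c$ is not a proper power in any of them.
\item $K$ is a graph of spaces with negatively curved pieces, so $G$ is hyperbolic by the Bestvina--Feighn combination theorem. Alternatively, $K$ admits a CAT$(-1)$ metric obtained by gluing hyperbolic surfaces with geodesic boundary of equal length.
\item One-endedness holds because $c$ is filling in each $\Sigma_i$ (it is the boundary word), so $G$ does not split over finite groups.
\end{itemize}
The $\mathcal Z$-JSJ tree of cylinders has one cylinder vertex stabilised by $\langle c\rangle$, adjacent to three QH vertices $\Sigma_i$. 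Each $\Sigma_i$ is an orientable surface, and it is orientation-preserving in the required sense, so \Cref{thmintro:virtual_HHG} applies.

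\textbf{Structure of $\Out{G}$.} By the description underlying \Cref{thmintro:virtual_bounded}, $\Out{G}$ acts on the JSJ tree and permutes the three QH vertices. This gives a map $\Out{G}\to S_3$ whose kernel $\Out{G}^0$ is an extension involving $\prod_i \MCG{\Sigma_i}$ (with boundary-fixing or sign data) and twists around $c$. Concretely, I would check the following:
\begin{itemize}
\item $\Out{G}$ surjects onto $S_3$, because the three copies are isomorphic.
\item It realises orientation reversals compatibly: the map $c\mapsto c^{-1}$ must be applied on all three pieces simultaneously.
\item Dehn twists along $C$ yield a central $\Z^2$, namely the twists in the three pieces modulo the diagonal, which is inner.
\end{itemize}
So $\Out{G}$ contains a normal subgroup $\Z^2$, and the quotient is, up to finite index, $\prod\MCG{S_1^1}$, extended by $S_3$ and $\Z/2$. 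The $S_3$ acts on this $\Z^2=\Z^3/\Z$ by permutation, which is the irreducible $2$-dimensional representation (the $A_2$ root lattice).

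\textbf{Obstruction.} I would argue by contradiction, assuming a HHG structure $(\Out{G},\frakS)$. The rank-$2$ free abelian normal subgroup $Z=\Z^2$ is central in the finite-index subgroup $\Out{G}^0$. Known results on HHGs, as in \cite{PetytSpriano_Eyries}, constrain such subgroups. The structure one expects is the following:
\begin{itemize}
\item A central $\Z^2$ in an HHG with acylindrically-hyperbolic-type quotient factors must have a finite-index subgroup that splits along orthogonal cyclic directions.
\item That is, $Z$ virtually decomposes as a product of cyclic subgroups, each acting loxodromically on a single domain of a pairwise orthogonal set.
\item This set of domains is permuted by the whole group, because of equivariance of the hierarchy.
\end{itemize}
Hence the conjugation action of $\Out{G}$ on $Z\otimes\mathbb Q$ must preserve a decomposition into lines, up to finite index. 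It would then act through a monomial group, permuting a finite set of lines. But the $S_3$ action on the $A_2$ lattice contains an order-$3$ rotation preserving no pair of lines whose union spans $\mathbb Q^2$ compatibly with the reflections. More precisely, the lines would form an orbit of size $1$ or $2$ spanning $\mathbb Q^2$, and neither option is invariant under $S_3$ acting irreducibly with a non-monomial $3$-cycle of determinant $1$ and no real eigenvectors. Alternatively, the eyries argument of \cite{PetytSpriano_Eyries}, Corollary 4.5, produces exactly this kind of "rotation in a $\Z^2$" obstruction, and I would adapt it directly.

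The main obstacle is the rigidity step: showing that every HHG structure forces the central $\Z^2$ to virtually split into orthogonal cyclic directions that are invariant up to permutation. Here I would rely on the rank/eyrie machinery cited above, applied to the maximal orthogonal collection of unbounded domains on which $Z$ acts loxodromically.
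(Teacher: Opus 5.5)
There is a genuine gap, and it is the rigidity step you flag yourself: it is the whole difficulty of the proof, not a technicality. Your hypothesis check is fine in outline, although ``$c$ is filling'' does not by itself prove one-endedness; the paper instead uses that each $\pi_1(\Sigma_i\cup\Sigma_j)$ is a one-ended surface group. Your closing linear algebra is also correct.

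By \Cref{prop:hrss}, the twist group $\mathcal T=\langle T_1,T_2,T_3\rangle\cong\Z^2$ has unbounded projections to a normaliser-invariant family $\mathcal L$ of $l\ge 2$ pairwise orthogonal domains, and your argument needs $l=2$. But $\Out{G}$ has rank $5$, and nothing in your proposal rules out $l\in\{3,4,5\}$.

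For $l=3$ there is no obstruction at all. The homomorphisms $\phi_i(T_j)=\delta_{ij}-\tfrac13$ are compatible with $T_1T_2T_3=1$ and are cyclically permuted by conjugation by $\sigma$. Together they give a quasi-isometric embedding $\mathcal T\to\R^3$. Their kernels are the three root lines $\langle T_2T_3^{-1}\rangle$, $\langle T_3T_1^{-1}\rangle$, $\langle T_1T_2^{-1}\rangle$, and $S_3$ does preserve this set.

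The principle you state does not follow from centrality either: in $\Z^3$ with its standard structure, the central subgroup $\{a+b+c=0\}$ sees all three coordinate domains. Nor does \cite[Corollary 4.5]{PetytSpriano_Eyries} transfer directly. There the ambient group is itself virtually $\Z^2$, so its rank forces $l=2$ for free.

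What is missing is a proof that $\mathcal T$ is hierarchically quasiconvex, i.e.\ a standard $2$-flat. Partial realisation then forbids a third orthogonal domain, since otherwise the orbit of $\mathcal T$ would coarsely contain a product of three quasilines. The paper obtains hierarchical quasiconvexity from two ingredients that are absent from your proposal.

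\begin{itemize}
\item \emph{Boundedness and rank.} An HHG structure on the finite-index subgroup $\TOut{G}$ forces the central extension $1\to\mathcal T\to\TOut{G}\to\prod_i\MCG{\Sigma_i'}\to1$ to be bounded, by \Cref{lem:HHGpropr}.\eqref{lem:HHGpropr_ext_is_HHG}. Hence $\TOut{G}$ is quasi-isometric to $\Z^2\times\prod_i\MCG{\Sigma'_i}$ and has rank exactly $5$.
\item \emph{Intersecting maximal flats.} Choose infinite cyclic $Z_i,Z_i'\le\MCG{\Sigma_i'}$ with $Z_i\cap Z_i'=1$. The preimages $F,F'\cong\Z^5$ of $\prod Z_i$ and $\prod Z'_i$ have maximal rank, so they are standard flats and hierarchically quasiconvex by \Cref{prop:quasiflat_sgr_are_std}. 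Then $\mathcal T=F\cap F'$ is hierarchically quasiconvex by \Cref{lem:intersection_hqc}.
\end{itemize}

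Only once $|\mathcal L|=2$ is established does your endgame go through. The order-three element $\sigma$ must then fix both domains of $\mathcal L$ and their ends. So every Busemann homomorphism $\phi$ on them satisfies $\phi(T_1)=\phi(T_2)=\phi(T_3)$, whence $3\phi(T_1)=\phi(T_1T_2T_3)=0$. This contradicts the fact that $T_1^2$ acts loxodromically on some domain of $\mathcal L$.
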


\begin{figure}[htp]
    \centering
    \includegraphics[width=0.75\linewidth]{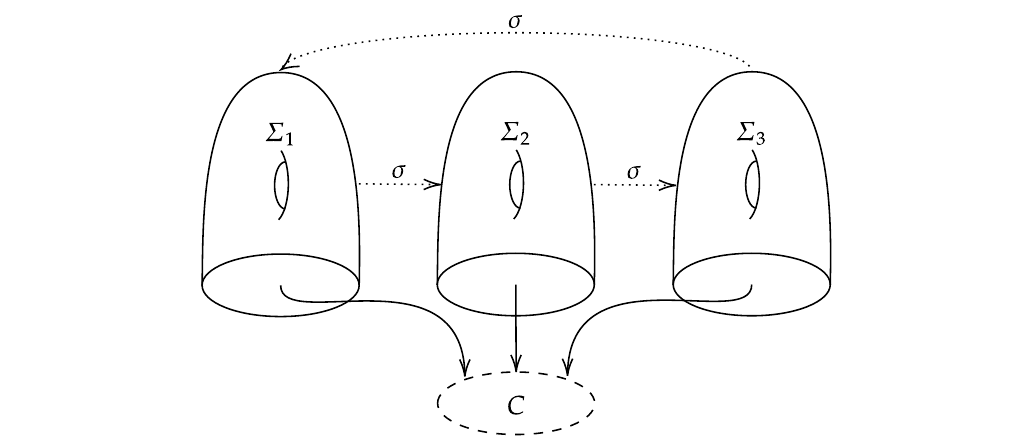}
    \caption{Depiction of the complex $K$, obtained by gluing three tori with one boundary to a common circle (here, the dashed line) along the boundaries. The cyclic permutation of the three surfaces descends to an order-three element $\sigma$ of $\Out{\pi_1(K)}$.}
    \label{fig:three_tori}
\end{figure}

\subsection*{Outline of sections and arguments}
\subsubsection*{JSJ decompositions}
The main tool for the study of outer automorphism groups of hyperbolic groups, which also played a central role in the solution of the isomorphism problem for hyperbolic groups, is Bowditch's \emph{JSJ decomposition} over virtually cyclic subgroups \cite{Bowditch_JSJ}. This is a graph-of-groups decomposition of a one-ended hyperbolic group with virtually cyclic edge groups, and whose vertex groups are either:
\begin{itemize}
    \item maximal virtually cyclic subgroups;
    \item ``rigid" quasiconvex subgroups, which cannot be split further relative to the edge groups;
    \item or \emph{quadratically hanging subgroups}, which are (finite extensions of) fundamental groups of hyperbolic orbifolds.
\end{itemize}
The decomposition is invariant under automorphisms of $G$, in a suitable sense, and this makes it possible to study automorphisms of $G$ by their action on the vertex groups. 

We do not directly use Bowditch's decomposition; instead, following \cite{DahmaniGuirardel_Isoprob}, we manipulate it to get another canonical decomposition, the \emph{$\mathcal Z$-JSJ tree of cylinders}. The new splitting has all properties of Bowditch's decomposition, and the advantage that quadratically hanging subgroups are now extensions of fundamental groups of hyperbolic orbifolds \emph{without mirrors}, i.e. which don't contain reflections along lines of $\Hyp$. This is relevant as mapping class groups of such orbifolds are finite-index subgroups of mapping class groups of surfaces, and this is the key fact underlying the proof of \Cref{thmintro:virtual_HHG}. 

In order to uniformise the notation among the numerous references on the subject, we did an extensive literature review which we then condensed in \Cref{sec:Background}. There the reader will find all the relevant background material on (bounded) central extensions, orbifolds, their mapping class groups, and JSJ decompositions of hyperbolic groups. While we do not claim any completeness of our exposition, as we omitted several facts that we do not need in this paper, we still hope it might serve as a gentle, self-contained entry point for the reader interested in any of the above topics.

\subsubsection*{(Virtual) bounded central extensions}
\Cref{sec:virtual_HHG} contains the virtual description of the outer automorphism group as a bounded central extension of a product of mapping class groups, which proves \Cref{thmintro:virtual_bounded}. Though a sizeable part of what we show is implicit in work of Levitt \cite{Levitt_authyp}, the proof of boundedness is new, and relies on a careful analysis of the analogue of the capping extension for orbifold mapping class groups. 

We now illustrate the general idea with a simple example. Take the complex $K$ depicted in \Cref{fig:three_tori} and let $G$ be its fundamental group. By an analogue of the Dehn-Nielsen-Baer Theorem \cite[Theorem 1.2]{Lafont}, automorphisms of $G$ are realised by self-homeomorphisms of $K$; hence restricting a homeomorphism to (the interior of) each $\Sigma_i$ gives an epimorphism $\rho\colon \TOut{G}\to \prod_{i=1}^3\MCG{S_{1,1}}$, where $\TOut{G}$ is the finite-index subgroup of $\Out{G}$ mapping each torus to itself in an orientation-preserving way, and $S_{1,1}$ is a torus with one puncture. The kernel of $\rho$ is generated by the \emph{Dehn twists} $T_1,T_2,T_3$ around the boundaries of the $\Sigma_i$'s; these elements span a subgroup isomorphic to $\Z^2$ as the product $T_1T_2T_3$ induces the conjugation by the core curve of $C$ on $G$. Since homeomorphisms of $K$ fix $C$ up to homotopy, Dehn twists are central in $\TOut{G}$, so we get a central extension
\[1\to \Z^2\to \TOut{G}\to \prod_{i=1}^3\MCG{S_{1,1}}\to 1.\]
The boundedness of the above extension reduces to the boundedness of the classical \emph{capping extension}
\[1\to \langle T\rangle \to \MCG{S_1^1}\to \MCG{S_{1,1}}\to 1,\]
where again the map on the right is the restriction to the interior; in turn, this follows from the existence of a certain quasimorphism $\MCG{S_1^1}\to \langle T\rangle\cong \Z$, obtained from the action on the \emph{annular curve graph} of the boundary \cite[Section 2.4]{masurminsky2}.

We stress that the capping extension can be shown to be bounded by other means: for instance, one could combine the boundedness of the classical Euler class \cite[Sections 10.2, 10.3]{frigerio} with the results in \cite[Section 5.5]{FM}. Such an argument could likely be extended to general orbifolds, but we preferred to use quasimorphism, as our techniques are more elementary.

\subsubsection*{Hierarchical hyperbolicity} Very roughly, a \emph{hierarchically hyperbolic group} is a finitely generated group admitting suitable ``coordinate projections'' to a collection of Gromov-hyperbolic spaces: see \Cref{defn:HHS} and \Cref{defn:HHG} below. The motivating example is the mapping class group of a finite-type, orientable surface, which projects to the curve graphs of its subsurfaces via subsurface projection. Combining this with the fact that bounded central extensions of HHGs are themselves HHGs \cite[Theorem 1.3]{FFMS}, in \Cref{subsec:virtual_HHG_of_Out} we deduce \Cref{thmintro:virtual_HHG} from \Cref{thmintro:virtual_bounded}. In the case where $G$ is itself a finite extension of a cocompact Fuchsian group of conical type, the same techniques describe $\Out{G}$ as a finite extension of a finite-index subgroup of a mapping class group, thus proving \Cref{thmintro:out_cocompact_is_HHG} without the need to pass to a finite-index subgroup. We strived to make \Cref{subsec:virtual_HHG_of_Out} as clear as possible for the reader who is not an expert on HHG: the only prerequisites we need are given as self-contained blackboxes, while a comprehensive background on HHGs is postponed to \Cref{subsec:HHS}.

\subsubsection*{The counterexample}
Finally, in \Cref{sec:counterex} we construct the counterexample, thus proving \Cref{thmintro:not_HHG}. Very roughly, assuming that $\Out{\pi_1(K)}$ is a HHG, we show that the $(3,3,3)$ triangle group $\Z^2\rtimes S_3\le \Out{\pi_1(K)}$, generated by Dehn twists around the boundaries of the $\Sigma_i$'s and the homeomorphisms permuting the three tori, is \emph{hierarchically quasiconvex}, the suitable notion of quasiconvexity in the hierarchical world. In analogy with quasiconvex subgroups in other non-positively curved groups, hierarchically quasiconvex subgroups inherit some of the geometric structure of the ambient group; we use this to deduce a contradiction, mimicking an argument of Petyt-Spriano who proved that the $(3,3,3)$ triangle group carries no HHG structure (see \cite[Corollary 4.5]{PetytSpriano_Eyries}).  

The proof of \Cref{thmintro:not_HHG} relies on the technical \Cref{prop:quasiflat_sgr_are_std}. For HHG experts, the latter states that an Abelian subgroup $A$ of maximal rank in a HHG coincides with a standard flat. The first version of this paper deduced this fact from the Quasiflat theorem \cite[Theorem A]{quasiflat}, using that the cocompact action of $A$ onto itself forbids the presence of ``singular points" in the quasiflat. Later, Pénélope Azuelos pointed out to us that the result is actually a straightforward consequence of \cite[Proposition 2.17]{HRSS}. We therefore decided to shorten our argument, and we refer anyone who might be interested in the original tools to the first ArXiv version of this article.

\subsection*{Questions}
As explained in \Cref{rem:where_orientable}, we could remove the orientability assumption in \Cref{thmintro:virtual_HHG}, and therefore prove virtual hierarchical hyperbolicity for outer automorphism of all one-ended hyperbolic groups, if the following question had a positive answer:
\begin{questintro}\label{questintro:MCG_nonor_orb}
    Let $O$ be a non-orientable, hyperbolic orbifold without mirrors. Is the mapping class group of $O$ a hierarchically hyperbolic group?
\end{questintro}
\noindent As pointed out in \Cref{lem:just_surface_mcg}, $\MCG{O}$ has finite index in the mapping class group of the surface obtained by removing the cone points. Since being a hierarchically hyperbolic group passes to finite-index subgroups, it would be enough to answer the following:
\begin{questintro}\label{questintro:MCG_nonor_surf}
    Is the mapping class group of a non-orientable surface, possibly with punctures and boundaries, a hierarchically hyperbolic group?
\end{questintro}
\noindent To the best of our knowledge, this is unknown even for closed non-orientable surfaces.

A positive answer to \Cref{questintro:MCG_nonor_surf} would follow from a more general result about centralisers in HHGs. Indeed, let $N$ be a non-orientable surface, let $S$ be its orientable double cover, and let $J\colon  S\to S$ be the order-two deck transformation. By combining results of \cite{Birman_chillingworth,HopeTillmann,Lima_Guaschi_Maldonado,Katayama_Kuno} one gets that, except in a few sporadic cases, $\MCG{N}$ embeds in $\EMCG{S}$ as the subgroup of orientation-preserving mapping classes that commute with $J$ (see \Cref{thm:orientable_double_cover_centraliser} below). So we ask:
\begin{questintro}\label{quest:finite_centraliser}
    Let $G$ be a HHG, and let $F\le G$ be a finite subgroup. Is the centraliser of $F$ a HHG?
\end{questintro}
\noindent The above is motivated by the principle that centralisers in non-positively curved groups should inherit the geometric properties of the ambient group: for example, centralisers in hyperbolic groups are quasiconvex, hence hyperbolic (see e.g. \cite[Proposition III.$\Gamma$.3.9]{BridsonHaefliger}), and centralisers in CAT(0) groups are themselves CAT(0) \cite{Ruane}. The analogue of \Cref{quest:finite_centraliser} for centralisers of virtually Abelian subgroup will be considered in forthcoming work of Azuelos and Hagen \cite{HHS_flat_torus}.

\subsection*{Acknowledgements}
We thank Alessandro Sisto for hosting EH during his research stay at the Maxwell Institute in Edinburgh, where this work was written, for asking the questions we answer, and for several insightful comments and suggestions. We are grateful to Pénélope Azuelos and Mark Hagen for numerous exchanges, and for suggesting how to shorten the proof of the counterexample. Many thanks to Alex Wright for sharing his ideas on how to tackle \Cref{quest:finite_centraliser}. We also acknowledge Gilbert Levitt for enlightening discussion on the technicalities hidden in defining mapping class groups for orbifolds. Finally, the first author was partially supported by the GNSAGA group of INdAM.

\section{Background}\label{sec:Background}

\subsection{Bounded central extensions}\label{sec:bdd_ext}
Here we gather generalities on (bounded) central extensions, and recall a characterisation of boundedness in terms of quasihomomorphisms: see \Cref{bounded via qm:Abelian_ker}.

\begin{defn} Given a central extension $1\to K\to E\to G\to 1$ with finitely generated kernel, let $\sigma \colon G \to E$ be a set-theoretic section. The map $\omega(g, h) = \sigma(g)\sigma(h)\sigma(gh)^{-1}\in K$ is a \emph{$2$-cocycle}, i.e. for all $g_1,g_2,g_3\in G$
\[\omega(g_2, g_3) - \omega(g_1g_2, g_3) + \omega(g_1, g_2g_3) - \omega(g_1, g_2) = 0.\]
Hence $\omega$ defines a class in $H^2(G; K)$, called the \emph{Euler class} of the extension, which is independent of the choice of $\sigma$ and we denote by $[E]$. We say the extension is \emph{bounded} if $[E]$ is bounded, meaning that there exists a $2$-cocycle $\alpha\colon G^2\to K$ such that $[\alpha]=[E]$ and $\alpha$ takes finitely many values.
\end{defn}

\noindent Bounded extensions are quasi-isometrically trivial, in the following sense:
\begin{thm}[{\cite{Gersten_B_is_QIT}}]\label{thm:gersten}
    Let $1\to K\to E\to G\to 1$ be a bounded central extension. Then $E$ is quasi-isometric to the direct product $K\times G$.
\end{thm}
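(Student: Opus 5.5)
We construct an explicit quasi-isometry $E\to K\times G$ from a bounded cocycle. Since $K$ is finitely generated abelian, and $E$ is finitely generated whenever $G$ is (implicitly assumed, as $G$ must carry a word metric for the statement to make sense), we fix finite generating sets throughout.

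\textbf{Step 1: a normalized set-theoretic section.} Since $[E]$ is bounded, choose a cocycle $\alpha$ with $[\alpha]=[E]$ taking finitely many values. As $\alpha$ and the cocycle $\omega$ of the extension differ by a coboundary $\delta\beta$ with $\beta\colon G\to K$, we replace $\sigma$ by $\sigma'(g)=\beta(g)^{-1}\sigma(g)$ (or $\beta(g)\sigma(g)$, depending on sign conventions). Then $\sigma'(g)\sigma'(h)\sigma'(gh)^{-1}=\alpha(g,h)$ lies in a finite set $F\subset K$. Every $e\in E$ is then uniquely written as $e=k\,\sigma'(g)$ with $k\in K$ and $g\in G$.

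\textbf{Step 2: the candidate quasi-isometry.} Define $\Phi\colon E\to K\times G$ by $\Phi(k\sigma'(g))=(k,g)$; this is a bijection. Using centrality, the multiplication on $E$ transports to $(k,g)(k',h)=(kk'\alpha(g,h),gh)$. Fix a finite generating set $S$ of $G$ and $T$ of $K$. Then $E$ is generated by $T\cup\sigma'(S)$, and $K\times G$ by $T\cup S$.

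\textbf{Step 3: Lipschitz bounds in both directions.} Right multiplication by a generator $t\in T$ changes $(k,g)$ to $(kt,g)$. Right multiplication by $\sigma'(s)$ changes $(k,g)$ to $(k\alpha(g,s),gs)$. Since $\alpha(g,s)\in F$ and $F$ is finite, this moves the image a distance at most $1+\max_{f\in F}|f|_T$. Hence $\Phi$ is Lipschitz. Symmetrically, moving $(k,g)$ to $(k,gs)$ in $K\times G$ corresponds in $E$ to $e\,\sigma'(s)\,\alpha(g,s)^{-1}$, which has bounded length, and moving to $(kt,g)$ corresponds to $e\,t$. So $\Phi^{-1}$ is also Lipschitz. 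Therefore $\Phi$ is a bi-Lipschitz bijection, in particular a quasi-isometry.

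The main obstacle, such as it is, lies in Step 1: checking that cohomologous cocycles correspond to changing the section by a function $G\to K$, so that the bounded representative is realised by an actual section. This follows directly from the definition of the coboundary, using that $K$ is central.
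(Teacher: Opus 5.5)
Your proof is correct. Realising the bounded representative by a section $\sigma'$ works because $K$ is central, so cohomologous cocycles differ by some $\delta\beta$ with $\beta\colon G\to K$. The bijection $k\sigma'(g)\mapsto(k,g)$ then moves Cayley-graph neighbours a bounded distance in both directions, so it is bi-Lipschitz.

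The paper gives no argument of its own and simply cites Gersten. As far as I recall, Gersten's proof is this same identification of $E$ with $K\times G$ carrying the twisted multiplication $(k,g)(k',h)=(kk'\alpha(g,h),gh)$.
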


\begin{defn}
\label{def:qm}
    Let $(K, \| \cdot \|)$ be a normed Abelian group, and let $E$ be another group. A map $\chi\colon E\to K$ is a \emph{quasihomomorphism} if there exists $D(\chi)\ge 0$, called the \emph{defect} of $\chi$, such that, for every $e_1,e_2\in E$, 
    \[\|\chi(e_1)+\chi(e_2)-\chi(e_1e_2)\|\le D(\chi).\]
    When $K=\Z$ or $\R$ with the Euclidean norm, we say \emph{quasimorphism} instead of quasihomomorphism.  A quasihomomorphism is \emph{homogeneous} if it restricts to a homomorphism on every cyclic subgroup.
\end{defn}

\noindent It is clear that a bounded modification of a quasihomomorphism is itself a quasihomomorphism, possibly with a larger defect. We shall often use this fact without mentioning it.

\begin{rem}\label{rem:homogeneisation}
    Given any quasihomomorphism $\phi\colon E\to \R^n$, the map $\psi(e)=\lim_{k\to \infty}\frac{\phi(e^k)}{k}$, called the \emph{homogeneisation} of $\phi$, is a homogeneous quasihomomorphism within finite distance from $\phi$ (see e.g. \cite[Lemma 2.21]{scl}). Notice that a homogeneous quasimorphism $\psi$ coincides with its homogeneisation, and is therefore invariant under conjugation: given $g,h\in E$, 
    \[\psi(ghg^{-1})=\lim_{k\to \infty}\frac{\psi(gh^kg^{-1})}{k}=\lim_{k\to \infty}\frac{\psi(g)+\psi(h^k)+\psi(g^{-1})}{k}=\psi(h),\]
    where we used that $\psi(gh^kg^{-1})$ differs from the sum of the $\psi$-images by at most $2D(\psi)$.
\end{rem}

\noindent Natural examples of quasimorphisms arise from actions on quasilines:
\begin{ex}[{Busemann quasimorphism, see e.g. \cite[Section 4.1]{Manning_actions_on_hyp}}]\label{ex:busemann}
    Let $E$ be a group acting on a \emph{quasiline} $X$, i.e. a space quasi-isometric to $\R$. Suppose the action does not swap the ideal endpoints. Given a sequence $\{x_n\}\in X$ converging to an ideal endpoint $p$, let $\phi_{\{x\}}\colon E\to \R$ be defined as
    \[\phi_{\{x\}}(g)=\limsup_{n\to \infty} \dist(gx_0,x_n)-\dist(x_0,x_n),\]
    and let $\phi(g)=\lim_{k\to \infty}\frac{\phi_{\{x\}}(g^k)}{k}$. One can show that the limit exists, does not depend on the choice of the sequence, and gives a homogeneous quasimorphism $\phi\colon E\to \R$ which is non-trivial on $g$ if and only if $g$ acts loxodromically on $X$.
\end{ex}

\noindent We recall the following equivalent characterisation of boundedness for central extensions, which is very useful for applications: see \cite[Proposition 2.9]{FFMS} and, independently, \cite[Lemma 5.6]{tao2025properactionsfiniteproducts}.
\begin{prop}\label{bounded via qm:Abelian_ker}
    Let $1 \to K \to E \xrightarrow{\pi} G \to 1$ be a central extension with finitely generated kernel. The following are equivalent:
    \begin{enumerate}
        \item $[E]$ is bounded.
        \item There exists a quasihomomorphism $\chi \colon E \to K$ such that $\chi|_K$ is the identity.
    \end{enumerate}
\end{prop}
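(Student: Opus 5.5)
We prove the two implications separately, writing $K$ additively and fixing a norm on $K$ (say the $\ell^1$ norm with respect to an identification $K\cong \Z^r\oplus T$ with $T$ finite, where the torsion part contributes boundedly). The correspondence to aim for is between sections $\sigma$ of $\pi$ whose cocycle is bounded and maps $\chi$ that are the identity on $K$: given a section, write $e=\chi(e)+\sigma(\pi(e))$; conversely, given $\chi$, define a section by correcting any lift by $\chi$.

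\emph{(2)$\Rightarrow$(1).} Let $\chi$ be a quasihomomorphism with $\chi|_K=\mathrm{id}$. Choose any set-theoretic section $s\colon G\to E$ and set $\sigma(g)=s(g)-\chi(s(g))$, where $K$ is central so this is a well-defined element of $E$. Then $\pi\sigma=\mathrm{id}$. Since $\chi(k e)$ differs from $k+\chi(e)$ by at most $D(\chi)$, we get $\|\chi(\sigma(g))\|\le D(\chi)$ for all $g$. Now compute $\omega(g,h)=\sigma(g)\sigma(h)\sigma(gh)^{-1}\in K$, so $\chi(\omega(g,h))=\omega(g,h)$. The quasihomomorphism inequality, applied twice (note $\chi(x^{-1})$ is within $D+\|\chi(1)\|$ of $-\chi(x)$), bounds this by $\|\chi(\sigma(g))\|+\|\chi(\sigma(h))\|+\|\chi(\sigma(gh))\|+O(D)$, which is uniformly bounded. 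A bounded-norm subset of a finitely generated abelian group is finite, so $\omega$ takes finitely many values and $[E]$ is bounded.

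\emph{(1)$\Rightarrow$(2).} Suppose $\alpha$ is a finitely-valued cocycle with $[\alpha]=[E]$. Then $\alpha$ differs from the cocycle $\omega_s$ of some section $s$ by a coboundary $\delta\beta$, with $\beta\colon G\to K$. Replacing $s$ by $s\cdot\beta^{\pm1}$, using centrality and with the sign chosen to match the coboundary convention, gives a section $\sigma$ whose cocycle is exactly $\alpha$. Define $\chi(e)=e\,\sigma(\pi(e))^{-1}\in K$. Then $\chi|_K=\mathrm{id}$ provided we normalise $\sigma(1)=1$, which costs only a bounded change. Using centrality,
\[\chi(e_1e_2)=e_1e_2\sigma(\pi e_1\pi e_2)^{-1}=\chi(e_1)+\chi(e_2)+\alpha(\pi e_1,\pi e_2),\]
so the defect is $\max\|\alpha\|<\infty$.

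The only delicate points are bookkeeping: the sign conventions relating coboundaries to changes of section, the normalisation $\sigma(1)=1$ (the modified cocycle stays bounded because $\alpha(1,1)$ is a single value), and the fact that finitely many values in a finitely generated $K$ is equivalent to bounded norm. I expect the normalisation/coboundary step in (1)$\Rightarrow$(2) to be the main place where care is needed, though it is routine.
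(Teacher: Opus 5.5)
Your proof is correct and complete. In (2)$\Rightarrow$(1), the corrected section $\sigma(g)=s(g)\chi(s(g))^{-1}$ satisfies $\|\chi(\sigma(g))\|\le D(\chi)$ (this uses $\chi|_K=\mathrm{id}$, in particular $\chi(1_E)=0$), and that bounds its cocycle. In (1)$\Rightarrow$(2), the map $\chi(e)=e\,\sigma(\pi(e))^{-1}$, built from a section realising the bounded cocycle, has defect $\max\|\alpha\|$, and the constant $\sigma(1)=\alpha(1,1)$ is removed by a bounded shift.

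The paper does not prove this proposition itself; it quotes it from \cite[Proposition 2.9]{FFMS} and \cite[Lemma 5.6]{tao2025properactionsfiniteproducts}. Your section-correction argument is the standard direct proof. The only thing to tidy is notation: $s(g)-\chi(s(g))$ should be written multiplicatively as $s(g)\chi(s(g))^{-1}$, since $E$ need not be abelian.
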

\noindent We give some easy applications of the above criterion, which are surely known to experts via other means. The first regards products of central extensions:
\begin{cor}\label{cor:prod_of_bounded_is_bounded}
    For $i=1,2$ let $1 \to K_i \to E_i \to G_i \to 1$ be bounded central extensions with finitely generated kernels. Then the direct product $1 \to K_1\times K_2 \to E_1\times E_2 \to G_1\times G_2 \to 1$ is a bounded central extension.
\end{cor}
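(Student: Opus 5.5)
We argue directly through the quasihomomorphism criterion of \Cref{bounded via qm:Abelian_ker}. First, the product sequence $1 \to K_1\times K_2 \to E_1\times E_2 \to G_1\times G_2 \to 1$ is exact, and $K_1\times K_2$ is central in $E_1\times E_2$, because each $K_i$ is central in $E_i$ and the two factors commute. Moreover $K_1\times K_2$ is finitely generated. So \Cref{bounded via qm:Abelian_ker} applies to the product extension. We fix norms $\|\cdot\|_i$ on $K_i$; for instance, word norms with respect to finite generating sets, which is what the criterion implicitly uses since all such norms on a finitely generated Abelian group are bi-Lipschitz equivalent. On $K_1\times K_2$ we use the $\ell^1$ norm $\|(k_1,k_2)\|=\|k_1\|_1+\|k_2\|_2$, which is the word norm for the union of the generating sets.

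By the implication $(1)\Rightarrow(2)$ of \Cref{bounded via qm:Abelian_ker}, applied to each factor, there are quasihomomorphisms $\chi_i\colon E_i\to K_i$ with defects $D_i$ such that $\chi_i|_{K_i}=\mathrm{id}$. Define
\[\chi\colon E_1\times E_2\to K_1\times K_2,\qquad \chi(e_1,e_2)=(\chi_1(e_1),\chi_2(e_2)).\]
Multiplication in the product is coordinatewise, so for $(e_1,e_2),(e_1',e_2')$ the defect vector is
\[\bigl(\chi_1(e_1)+\chi_1(e_1')-\chi_1(e_1e_1'),\ \chi_2(e_2)+\chi_2(e_2')-\chi_2(e_2e_2')\bigr).\]
Its $\ell^1$ norm is at most $D_1+D_2$, so $\chi$ is a quasihomomorphism. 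For $(k_1,k_2)\in K_1\times K_2$ we have $\chi(k_1,k_2)=(k_1,k_2)$, so $\chi$ restricts to the identity on the kernel. The implication $(2)\Rightarrow(1)$ of \Cref{bounded via qm:Abelian_ker} then gives that the product extension is bounded.

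There is no real obstacle here. The only point needing care is that the notion of quasihomomorphism does not depend on the chosen norm on the kernel, up to changing the defect. This holds because any two word norms on a finitely generated Abelian group are bi-Lipschitz equivalent.

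Alternatively, one can give a direct cocycle argument. Take bounded representatives $\alpha_i$ of $[E_i]$ and set $\alpha((g_1,g_2),(h_1,h_2))=(\alpha_1(g_1,h_1),\alpha_2(g_2,h_2))$. This is a $2$-cocycle taking finitely many values. It represents the Euler class of the product extension, because it arises from the product of the sections $\sigma_i$ realising the $\alpha_i$.
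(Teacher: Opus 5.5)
Your proof is correct and is essentially the paper's argument. The paper also takes the product $\psi_1\times\psi_2\colon E_1\times E_2\to K_1\times K_2$ of the quasihomomorphisms provided by \Cref{bounded via qm:Abelian_ker}, notes that it is a quasihomomorphism restricting to the identity on $K_1\times K_2$, and concludes by the same criterion; your remarks on norms and your alternative product-cocycle argument are valid but not needed.
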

\begin{proof}
    If for $i=1,2$ we choose a quasimorphism $\psi_i\colon E_i\to K_i$ which is the identity on $K_i$, then $\psi_1\times\psi_2\colon E_1\times E_2\to K_1\times K_2$ is a quasihomomorphism which is the identity on $K_1\times K_2$.
\end{proof}
\noindent The second consequence regards sub-extensions:
\begin{cor}\label{cor:subext_of_bounded_is_bounded}
    Let $1 \to K \to E \xrightarrow{\pi} G \to 1$ be bounded central extensions with finitely generated kernel, and let $E'\le E$. Then the central extension $1 \to K\cap E' \to E' \to \pi(E')\to 1$ is bounded.
\end{cor}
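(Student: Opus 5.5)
The plan is to use the quasihomomorphism criterion of \Cref{bounded via qm:Abelian_ker} twice, once in each direction. Set $K'=K\cap E'$. First I check the hypotheses of that proposition for the restricted extension. $K'$ is central in $E'$ because $K$ is central in $E$. $K'$ is finitely generated, being a subgroup of the finitely generated Abelian group $K$. Finally, $\pi|_{E'}$ has kernel exactly $K'$, so $1\to K'\to E'\to \pi(E')\to 1$ is indeed a central extension with finitely generated kernel.

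Since the original extension is bounded, \Cref{bounded via qm:Abelian_ker} gives a quasihomomorphism $\chi\colon E\to K$ with $\chi|_K=\mathrm{id}_K$. Restricting, $\chi|_{E'}\colon E'\to K$ is a quasihomomorphism that is the identity on $K'$. Its values lie in $K$, not necessarily in $K'$, so I will compose it with a coarse retraction $\rho\colon K\to K'$. By \emph{coarse retraction} I mean a quasihomomorphism that restricts to the identity on $K'$. For norms I use word norms with respect to finite generating sets; for finitely generated Abelian groups, a set is bounded exactly when it is finite, so any two such norms give the same notion of quasihomomorphism.

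To build $\rho$, let $K''=\{k\in K : nk\in K' \text{ for some } n\ge 1\}$ be the saturation of $K'$. The quotient $K/K''$ is torsion-free and finitely generated, hence free, so $K''$ is a direct summand: $K=K''\oplus C$. Let $p\colon K\to K''$ be the projection. Since $K''$ is finitely generated and $K''/K'$ is torsion, the index $[K'':K']$ is finite. Fix a finite set $T$ of coset representatives, so that every $k\in K''$ is uniquely $k=r(k)+t_k$ with $r(k)\in K'$ and $t_k\in T$. Then
\[r(a+b)-r(a)-r(b)=t_a+t_b-t_{a+b}\]
ranges over a finite set, so $r\colon K''\to K'$ is a quasihomomorphism. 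Choosing $0\in T$ makes $r$ the identity on $K'$.

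Set $\rho=r\circ p$ and $\chi'=\rho\circ\chi|_{E'}\colon E'\to K'$. Since $p$ is a homomorphism, it sends the finitely many defect values of $\chi$ to finitely many values. So $p\circ\chi$ is a quasihomomorphism, and then $\chi'$ is one as well, since $r$ adds only a bounded error. On $K'$ we have $\chi'=r(p(\mathrm{id}))=\mathrm{id}$, because $K'\subseteq K''$. Applying \Cref{bounded via qm:Abelian_ker} once more, now to the extension $1\to K'\to E'\to\pi(E')\to 1$, gives boundedness.

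The only step needing care is the construction of the coarse retraction $K\to K'$. The point there is that $K'$ need not be a direct summand of $K$, which is handled by passing through the saturation $K''$ and using that $K'$ has finite index in it.
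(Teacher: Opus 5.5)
Your proof is correct and follows the paper's argument. Like the paper, you restrict the quasihomomorphism $E\to K$ to $E'$ and compose it with a quasihomomorphism $K\to K\cap E'$ that is the identity on $K\cap E'$; the paper only asserts that such a retraction exists because $K$ is Abelian, whereas you construct it explicitly via the saturation of $K\cap E'$ and finitely many coset representatives.
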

\begin{proof}
    Let $\psi\colon E\to K$ be a quasihomomorphism which is the identity on $K$. The restriction $\psi|_{E'}\colon E'\to K$ is a quasihomomorphism which is the identity on $K\cap E'$. Since $K$ is an abelian group, we can then find a quasihomomorphism $\xi\colon K\to K\cap E'$ which is the identity on $K\cap E'$. Hence the composition $\xi\circ \psi|_{E'}\colon E'\to E'\cap K$ is a quasihomomorphism which is the identity on $K\cap E'$, proving that the extension is bounded. 
\end{proof}

\noindent The last consequence was already pointed out in \cite[Corollary 2.10]{FFMS}:
\begin{cor}\label{cor:quot_of_bounded_by_kernel}
Let $1 \to K \to E\to G \to 1$ be a bounded central extension with finitely generated kernel. If $L\le K$, then the extension $1 \to K/L \to E/L\to G \to 1$ is also bounded.
\end{cor}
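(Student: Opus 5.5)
We will prove the statement using the quasihomomorphism criterion of \Cref{bounded via qm:Abelian_ker}, following the template of the two preceding corollaries.

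First, note that $1 \to K/L \to E/L \to G \to 1$ is indeed a central extension. The map $E/L\to G$ is well defined and surjective because $L\le K=\ker(E\to G)$, and its kernel is $K/L$. Since $K$ is central in $E$, $L$ is normal in $E$ and $K/L$ is central in $E/L$. Moreover, $K/L$ is finitely generated, being a quotient of $K$. So \Cref{bounded via qm:Abelian_ker} applies to both extensions.

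By boundedness of the original extension, \Cref{bounded via qm:Abelian_ker} gives a quasihomomorphism $\chi\colon E\to K$ with $\chi|_K=\mathrm{id}_K$. We want a quasihomomorphism $\bar\chi\colon E/L\to K/L$ restricting to the identity on $K/L$.

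To construct it, fix a set-theoretic section $s\colon E/L\to E$ and set
\[\bar\chi(x)=p(\chi(s(x))),\]
where $p\colon K\to K/L$ is the projection. On $K/L$ we choose $s$ to take values in $K$, so that $\bar\chi(kL)=p(\chi(k))=kL$, i.e. $\bar\chi$ restricts to the identity on $K/L$.

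The main point is checking that $\bar\chi$ has bounded defect. For $x,y\in E/L$ we have $s(x)s(y)=s(xy)\ell$ for some $\ell\in L\subseteq K$. Since $K$ is central, $\chi(s(xy)\ell)$ differs from $\chi(s(xy))+\chi(\ell)=\chi(s(xy))+\ell$ by at most $D(\chi)$. Projecting to $K/L$ kills $\ell$, hence
\[\|\bar\chi(x)+\bar\chi(y)-\bar\chi(xy)\|\le 2D(\chi),\]
provided we equip $K/L$ with a norm for which $p$ does not increase norms. The natural choice is the quotient norm $\|kL\|=\min_{\ell\in L}\|k\ell\|$; the word norm with respect to the image of a finite generating set also works up to constants, since any two word norms on a finitely generated Abelian group are bi-Lipschitz equivalent.

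The only subtle step is this choice of norm on $K/L$. Once it is fixed, $\bar\chi$ is a quasihomomorphism which is the identity on $K/L$, and \Cref{bounded via qm:Abelian_ker} yields that $1 \to K/L \to E/L\to G \to 1$ is bounded.
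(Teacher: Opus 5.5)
Your proof is correct. The paper gives no argument of its own here; it defers to \cite[Corollary 2.10]{FFMS}. Your proof is therefore a self-contained substitute, in the same spirit as \Cref{cor:prod_of_bounded_is_bounded} and \Cref{cor:subext_of_bounded_is_bounded}. The defect estimate $\|\bar\chi(x)+\bar\chi(y)-\bar\chi(xy)\|\le 2D(\chi)$ is right.

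On the norm: if $K$ carries the word norm for a finite generating set $S$, the quotient norm on $K/L$ is \emph{exactly} the word norm for $p(S)$, not just equal up to constants, so no subtlety remains. Also, the estimate $\|\chi(s(xy)\ell)-\chi(s(xy))-\ell\|\le D(\chi)$ uses only the quasihomomorphism inequality and $\chi|_K=\mathrm{id}$; centrality of $K$ plays no role there.

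A shorter alternative bypasses \Cref{bounded via qm:Abelian_ker} and works directly with the definition of boundedness. Let $\sigma\colon G\to E$ be a section with cocycle $\omega$, and let $q\colon E\to E/L$ be the quotient. Then $q\circ\sigma$ is a section of $E/L\to G$ whose cocycle is $p\circ\omega$. Hence $[E/L]=p_*[E]$ under the coefficient map $p_*\colon H^2(G;K)\to H^2(G;K/L)$. Since $p_*$ sends a cocycle taking finitely many values to a cocycle taking finitely many values, $[E/L]$ is bounded.

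That route needs no lift $s\colon E/L\to E$ and no choice of norm. Yours has the advantage of staying in the quasihomomorphism language the paper uses throughout.
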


\subsection{Hyperbolic orbifolds and their mapping class groups}\label{subsec:Orbifolds}
We recall here definition and properties of \emph{hyperbolic 2-orbifolds} and their \emph{mapping class groups}, gathering notation and results from across the literature for improved clarity. We refer to \cite{Maclachlan_OrbifoldMCG, Bowditch_JSJ,Fujiwara, DahmaniGuirardel_Isoprob} (among several others) for the original treatments, and to \cite{Martelli} for a more recent, comprehensive introduction.

\subsubsection{Bounded Fuchsian groups}
In what follows, let $\Hyp$ be the hyperbolic plane, and let $\partial\Hyp$ its boundary at infinity, which is homeomorphic to a circle $\mathbb{S}^1$.
\begin{rem}[Isometries of $\Hyp$]\label{rem:types}
An isometry $g\in \operatorname{Isom}(\Hyp)$ is of one of $5$ \emph{types}. If $g$ is orientation-preserving, then it is either:
\begin{itemize}
    \item\emph{elliptic} if it fixes a point in $\Hyp$;
    \item\emph{parabolic} if it fixes exactly one point of the boundary $\partial\Hyp$;
    \item\emph{loxodromic} if it fixes exactly two ideal points of $\partial \Hyp$ and acts as a translation along the unique geodesic that connects them, called the axis of $g$.
\end{itemize}
If instead $g$ reverses orientation, then it is either:
\begin{itemize}
    \item a \emph{reflection} along a geodesic;
    \item a \emph{glide-reflection} along a geodesic, that is, the composition of a reflection along a geodesic and a loxodromic along the same geodesic.
\end{itemize}
The subgroup of $\operatorname{Isom}(\Hyp)$ fixing a point is isomorphic to the orthogonal group $O(2)$ via the action on the boundary $\partial \Hyp$, seen as a circle with the standard round metric. 
\end{rem}

\begin{defn}[Bounded Fuchsian group]\label{defn:Fuchsian} A \emph{bounded Fuchsian group} is a non-virtually-cyclic, finitely generated, discrete subgroup $G$ of $\operatorname{Isom}(\Hyp)$ without parabolics. Such a group acts properly and cocompactly on the \emph{convex hull} $D_G$ of the limit points $\Lambda(G)\subseteq \partial\Hyp$ for the action. If moreover $D_G=\Hyp$ then $G$ is called a \emph{cocompact} Fuchsian group. A bounded Fuchsian group is \emph{orientation-preserving}
\end{defn}

\begin{rem} The term ``bounded Fuchsian group" has appeared in the literature with different meanings. For example, in \cite{DahmaniGuirardel_Isoprob}, Dahmani and Guirardel do not require the action on $\Hyp$ to be faithful, but only to have finite kernel; their bounded Fuchsian groups are therefore \emph{finite extensions} of groups satisfying \Cref{defn:Fuchsian}. Their notion is taken from Bowditch's paper \cite{Bowditch_JSJ}, which further makes a distinction between bounded Fuchsian groups and cocompact ones. Classically (see e.g. \cite{katok_fuchsian}) Fuchsian groups are commonly understood as subgroups of $\operatorname{Isom}^+(\Hyp)$, so we prefer to exclude non-faithful actions. As a consequence, with our notation Fuchsian groups have no non-trivial finite normal subgroups: see \Cref{lem:nofinite_normal_sgr_orbifold}. On the other hand, we do include orientation-reversing isometries: our cocompact Fuchsian groups are also known as \emph{non-Euclidean crystallographic groups} \cite{NEC}. Finally, in contrast with Bowditch's notation, we consider cocompact Fuchsian group as special examples of bounded ones, to simplify the notation later.
\end{rem}

\noindent The following is well-known, but we provide a proof for completeness.
\begin{lem}\label{lem:nofinite_normal_sgr_orbifold}
Let $G$ be a bounded Fuchsian group. Then $G$ is Gromov-hyperbolic. Moreover, it has no non-trivial finite normal subgroups, and in particular it has trivial centre.
\end{lem}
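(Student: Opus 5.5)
Hyperbolicity follows from the Švarc--Milnor lemma. By \Cref{defn:Fuchsian}, $G$ is finitely generated and acts properly and cocompactly by isometries on the convex hull $D_G$. Since $G$ is not virtually cyclic, $\Lambda(G)$ is infinite, so $D_G$ is a nonempty closed convex subset of $\Hyp$. Equipped with the restricted metric, $D_G$ is a geodesic space; it is also $\delta$-hyperbolic with the same constant as $\Hyp$, because geodesic triangles in $D_G$ are geodesic triangles in $\Hyp$. Švarc--Milnor then gives a quasi-isometry from $G$ to $D_G$, and Gromov-hyperbolicity is a quasi-isometry invariant.

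For the second claim, let $F\trianglelefteq G$ be a finite normal subgroup. The first step is to locate the limit set of $G$ inside the fixed set of $F$. Since $F$ is finite, it has a bounded orbit in $\Hyp$. Its fixed set $\operatorname{Fix}(F)\subseteq\Hyp$ is therefore nonempty; one can see this either by taking the circumcentre of an orbit, or by the Cartan fixed point theorem in the CAT($-1$) space $\Hyp$. This fixed set is closed and convex, and normality gives $g\operatorname{Fix}(F)=\operatorname{Fix}(gFg^{-1})=\operatorname{Fix}(F)$ for every $g\in G$, so it is $G$-invariant. Pick $x\in\operatorname{Fix}(F)$. The limit set $\Lambda(G)$ is the set of accumulation points of the orbit $Gx$ in $\partial\Hyp$, so it lies in the boundary at infinity $\partial\operatorname{Fix}(F)$.

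The second step is to show that $F$ fixes $\Lambda(G)$ pointwise, and I expect the case analysis here to be the main obstacle. The fixed set of a finite group of isometries of $\Hyp$ is either all of $\Hyp$, a geodesic (this happens when $F$ is generated by a single reflection), or a single point. A point has empty boundary at infinity, and a geodesic has only two boundary points. Since $\Lambda(G)$ is infinite, we must have $\operatorname{Fix}(F)=\Hyp$. To be safe, I would give a direct argument rather than cite this classification. Every nontrivial isometry of $\Hyp$ with a fixed point $p$ is conjugate to an element of $O(2)$ acting on $\partial\Hyp$, as in \Cref{rem:types}. A nontrivial rotation fixes no boundary point, and a reflection fixes exactly two. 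So any nontrivial $f\in F$ fixes at most two points of $\partial\Hyp$. On the other hand, $f$ fixes $\partial\operatorname{Fix}(F)$ pointwise, and this set contains the infinite set $\Lambda(G)$. Hence $f$ is the identity on $\Hyp$.

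Finally, $G$ acts faithfully, being a subgroup of $\operatorname{Isom}(\Hyp)$, so $F=\{1\}$. For the centre: $Z(G)$ is normal, so it suffices to show it is finite. Any central element $z$ commutes with a loxodromic $g\in G$; such loxodromics exist because $G$ is non-elementary. Then $z$ preserves the two-point fixed set $\operatorname{Fix}_{\partial\Hyp}(g)$. The same holds for every loxodromic $g\in G$, and their endpoints are dense in the infinite set $\Lambda(G)$. Since $z$ acts as a homeomorphism of $\partial\Hyp$ preserving each of these pairs, continuity forces $z^2$ to fix $\Lambda(G)$ pointwise. Hence $z^2=1$ by the argument of the previous step. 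Alternatively, in a hyperbolic group which is not virtually cyclic, the centre is finite, since the centraliser of a loxodromic element is virtually cyclic. In either case $Z(G)$ is a finite normal subgroup, hence trivial.
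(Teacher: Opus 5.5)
Your proof is correct and follows the paper's approach: hyperbolicity comes from the proper cocompact action on the convex set $D_G$; a finite normal subgroup acts trivially on the infinite limit set, while a nontrivial isometry fixing a point of $\Hyp$ fixes at most two points of $\partial\Hyp$; and the centre is finite via centralisers in hyperbolic groups. The only real difference is that you prove directly, through the $G$-invariant set $\operatorname{Fix}(F)$, that finite normal subgroups lie in the kernel of the action on $\Lambda(G)$, whereas the paper imports this from Baik--Jang.

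One small slip occurs in your first route for the centre. Here $z^2$ is not known to fix a point of $\Hyp$, so you should instead use that every nontrivial isometry of $\Hyp$ fixes at most two boundary points. You should then conclude because $\langle z\rangle$ is a finite normal subgroup, rather than asserting that $Z(G)$ itself is finite. Your second route already covers this.
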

\begin{proof} Gromov-hyperbolicity follows from the fact that $G$ acts properly and cocompactly on $D_G$, which is a convex subspace of $\Hyp$ and is therefore hyperbolic. Therefore, since $G$ is not virtually cyclic, it admits a maximal finite normal subgroup $K$, which is the kernel of the action of $G$ on $\Lambda(G)=\partial D_G$ (see e.g. \cite[Corollary 1.2]{baik-jang}, which is far more general). Now $D_G$ has more than two points at infinity, as otherwise $G$ would be virtually cyclic; hence every element $k\in K$ is a finite-order isometry of $\Hyp$ fixing at least three points at infinity, and is therefore trivial (this is because any finite-order isometry fixes a point in $\Hyp$, and an element of $O(2)$ fixing three points on the circle is trivial). Hence $G$ has no non-trivial finite normal subgroups; in particular, $G$ has trivial centre, as the latter is a finite normal subgroup (see e.g. the far more general \cite[Theorem 6.14.(a)]{DGO}).
\end{proof}

\subsubsection{Orbifold fundamental groups}
\noindent We now specialise the more general definition of an \emph{orbifold} to our setting. To each bounded Fuchsian group we shall associate two orbifolds, obtained as quotients of $D_G$ and $\Hyp$, respectively. The distinction will be relevant as the former will have boundary components which will not appear in the latter.

\begin{defn}[Orbifold]\label{defn:orbifold}
    Let $G$ be a bounded Fuchsian group. The associated \emph{compact orbifold} is the quotient $O=D_G/G$, while we denote the full quotient $O'=\Hyp/G$ as the \emph{punctured orbifold} associated to $G$. The group $G$ is called the \emph{orbifold fundamental group} of both $O$ and $O'$, denoted $\piorb{O}=\piorb{{O'}}$. We say that $O$ and $O'$ are \emph{orientable} if $G$ is orientation-preserving.
\end{defn}
\begin{rem}
    One could define orbifolds (and their fundamental groups) in greater generality as topological spaces locally modelled after quotients of the Euclidean space by finite subgroups of isometries, without passing through the universal cover: see, e.g. \cite[Section 3.6]{Martelli}.
\end{rem}

\begin{defn}[Singular locus]
    The \emph{singular locus} is the image in $O$ (resp. $O'$) of the points of $\Hyp$ with non-trivial $G$-stabiliser. Given a point $x$ in the singular locus, and a preimage $p\in \Hyp$ of $x$, we say that:
    \begin{itemize}
        \item $x$ is a \emph{cone point} if $\Stab{G}{p}\cong \Z/m$, generated by a rotation; $m$ is called the \emph{weight} of $x$, and by construction the total angle around $x$ is $2\pi/m$.
        \item $x$ is a \emph{mirror point} if $\Stab{G}{p}\cong \Z/2$, generated by a reflection.
        \item $x$ is a \emph{corner reflector} if $\Stab{G}{p}$ is a dihedral group, generated by two reflections along lines intersecting at $p$.
    \end{itemize}
    The quotient map $D_G\to O$ (resp. $\Hyp\to O'$) is a branched covering, branching over the singular locus. See \Cref{fig:hyperelliptic} for an example of an orbifold with cone points, and \Cref{fig:666} for an example of an orbifold with mirrors and corner reflectors.
\end{defn}

\begin{figure}[htp]
    \centering
    \includegraphics[width=\linewidth]{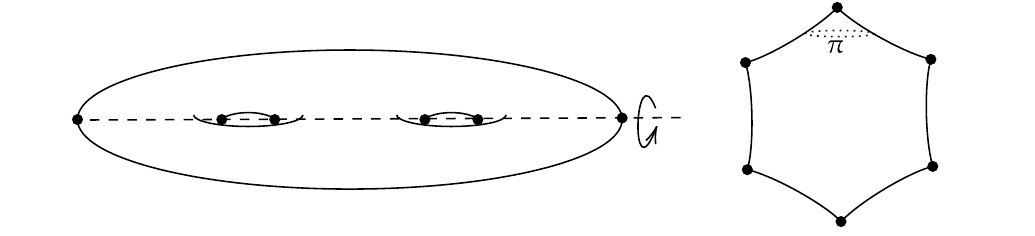}
    \caption{Consider the genus two closed surface $\Sigma_2$,  and choose a hyperbolic metric making the rotation $\rho$ by an angle of $\pi$ around the dashed axes an isometry (one can either explicitly construct such a metric, or invoke Nielsen's realization theorem, see e.g. \cite[Theorem 7.1]{FM}). The quotient $\Sigma_2/\langle \rho\rangle$ is a sphere with six cone points, each of weight 2, and indeed the total angle around each cone point is $\pi$.}
    \label{fig:hyperelliptic}
\end{figure}

\begin{figure}[htp]
    \centering
    \includegraphics[width=0.3\linewidth]{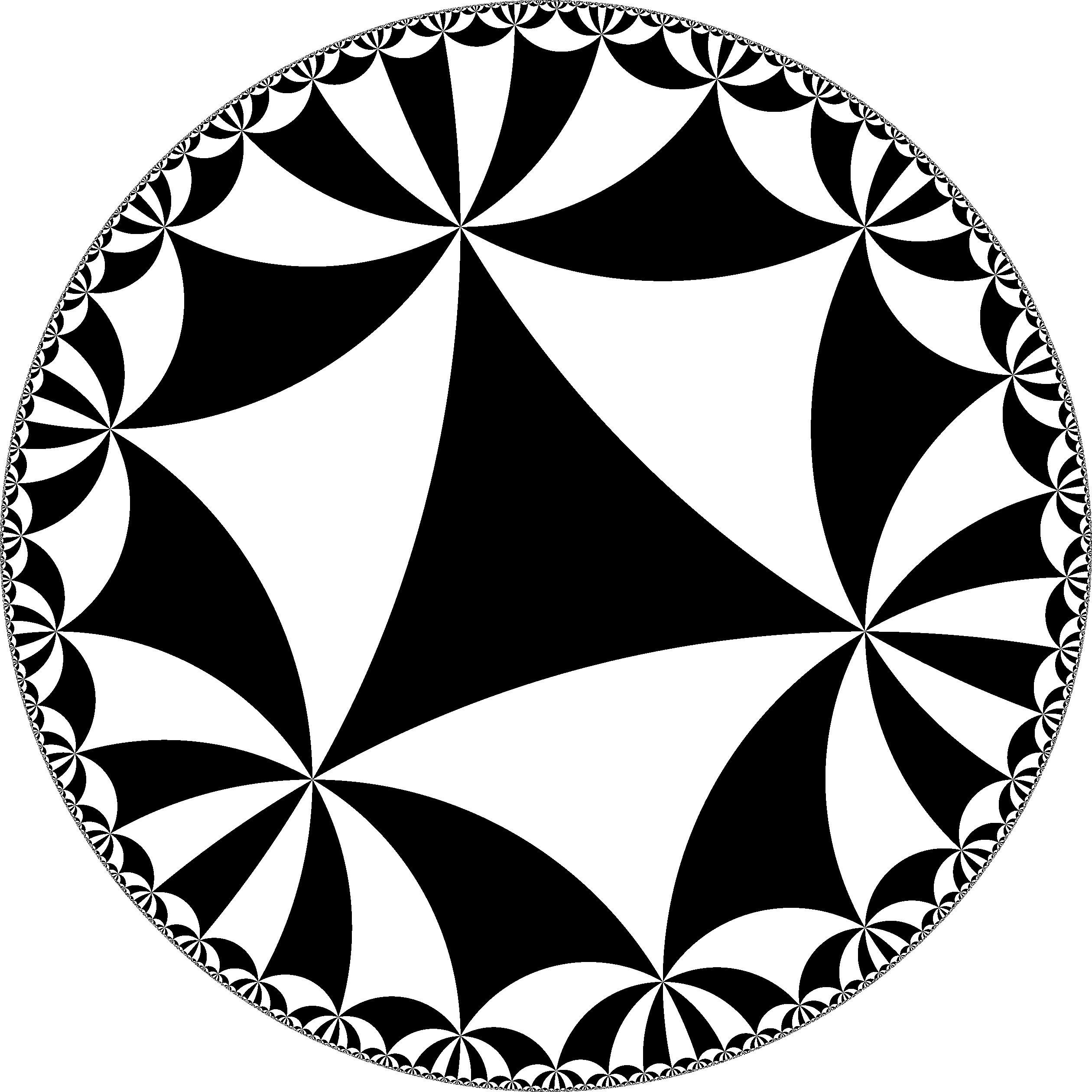}
    \caption{The above image is work of A. Sherwood, whose source can be found on \href{https://commons.wikimedia.org/w/index.php?curid=24949653}{Wikipedia}. The $(6,6,6)$ triangle group (that is, the Coxeter group on a triangle with all edge labels $6$) acts on $\Hyp$ by reflections along the sides of a hyperbolic triangle $T$ with all angles $\pi/6$, whose translates cover the space. The quotient orbifold is isomorphic to $T$ itself, and has three mirror segments intersecting at three corner reflectors of angles $\pi/6$. The stabiliser of any vertex $v$ in $T$ is a copy of the dihedral group $D_6$, generated by the reflections along the sides of $T$ meeting at $v$.}
    \label{fig:666}
\end{figure}

\begin{defn}[Orbifold boundary]
    The \emph{orbifold boundary} of the compact orbifold $O$, denoted $\partial O$, is the image of the boundary of $D_G$. Given a boundary line $\ell\subset \partial D_G$, the stabiliser of $\ell$ in $G$ is isomorphic to either $\Z$, generated by a loxodromic along $\ell$, or the infinite Dihedral group $D_{\infty}\cong \Z/2 *\Z/2$, generated by two reflections along lines orthogonal to $\ell$. Hence a boundary component $C$ of $O$ is either a circle or a segment, respectively. A \emph{peripheral subgroup} associated to $C$ is the $\Z$ or $D_{\infty}$ subgroup of $\piorb{O}$ stabilising a line in the preimage of $C$; any two such subgroups are conjugate, so with a little abuse of notation we speak of ``the" peripheral subgroup associated to $C$.
\end{defn}

\begin{rem}[Topological boundary versus orbifold boundary]\label{rem:orbifold_boundary}
    Notice that the orbifold boundary might be a proper subset of the topological boundary of $O$. Indeed, the latter might also contain \emph{mirrors}, which do not come from the boundary of $D_G$: see \Cref{fig:hexagon} for an example. We therefore stress that, in this paper $\partial O$ will always denote the \emph{orbifold} boundary.
\end{rem}

\begin{figure}[htp]
    \centering
    \includegraphics[width=\linewidth]{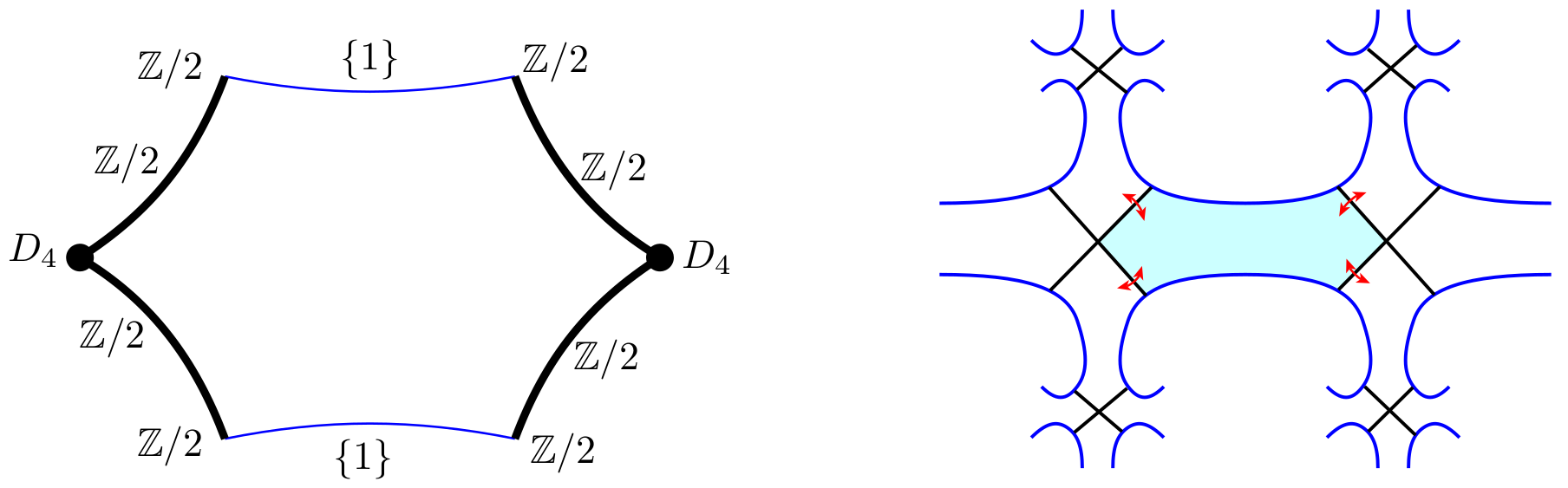}
    \caption{The above is \cite[Figure 5]{Guirardel_Levitt}. On the right, the bounded Fuchsian group $D_4* D_4$ acts cocompactly on a fattened tree, with each generator acting as the reflection along one of the black lines. On the right, the quotient compact orbifold is a hexagon with two orbifold boundary components (in blue), two corner reflectors with stabilisers $D_4$ (the black dots), and four mirror segments (in black). The orbifold boundary is thus smaller than the whole topological boundary of the hexagon. The peripheral subgroup associated to each boundary component is a copy of $D_{\infty}$, generated by the reflections along two consecutive black lines.}
    \label{fig:hexagon}
\end{figure}

\begin{rem}\label{rem:funnels}
    The punctured orbifold $O'$ is obtained from $O$ by attaching to each orbifold boundary component $C\subseteq \partial O$ a copy of $C\times [0,\infty)$, which is the image in $O'$ of a half-space of $\Hyp$ cut out by a boundary line of $D_G$. If $C$ is a circle, this is topologically the same as capping the circle with a once-punctured disk, justifying the terminology ``punctured orbifold". We stress that, while $O$ and $O'$ are homotopy equivalent and have the same (orbifold) fundamental group, they are not homeomorphic; this distinction will later be reflected also in their mapping class groups. 
\end{rem}

\subsubsection{Mapping class groups for orbifolds of conical type}
For the rest of the Section, unless otherwise stated, we shall work under the following assumption:

\begin{defn}
    A bounded Fuchsian group is \emph{of conical type} if it does not contain reflections. As a consequence, the associated orbifolds $O$ and $O'$ contain no mirrors, and in particular the topological boundary of $O$ coincide with its orbifold boundary. Notice that orientation-preserving bounded Fuchsian groups are automatically of conical type.
\end{defn}

\begin{defn}[Orbifold mapping class group]\label{defn:orbifold_MCG}
Let $G$ be a bounded Fuchsian group of conical type, and let $P$ be either the compact or the punctured orbifold associated to $G$. Let $\operatorname{Homeo}(P)$ be the group of self-homeomorphisms of $P$ which fix the orbifold boundary $\partial P$ pointwise, and which map each cone point to a cone point of the same weight. The \emph{extended orbifold mapping class group} of $P$, denoted by $\EMCG{P}$, is the quotient of $\operatorname{Homeo}(P)$ by isotopies relative to $\partial P$ and to the cone points. If $P$ is orientable, we define the \emph{mapping class group} of $P$, denoted by $\MCG{P}$, in the same way, but restricting to orientation-preserving homeomorphism. If $P$ has punctures, the \emph{pure} (extended) mapping class group, denoted $\PMCG{P}$ (resp. $\EPMCG{P}$), is the finite index subgroup of $\MCG{P}$ (resp. $\EMCG{P}$) of mapping classes which fix each puncture. 
\end{defn}

\begin{rem}
    Notice that $\MCG{O}$ and $\MCG{O'}$ are not isomorphic: the former must fix each boundary component pointwise, while the latter can permute punctures. The precise relationship between them is clarified in \Cref{lem:capping_ext} below. 
\end{rem}

\noindent Let $P$ be one of the two orbifolds associated to a bounded Fuchsian group of conical type, and denote by $S$ the surface obtained from $P$ by removing cone points, which are therefore replaced by punctures. The following obvious lemma relates the mapping class groups of $S$ and $P$:

\begin{lem}\label{lem:just_surface_mcg}
$\EMCG{P}$ is the finite-index subgroup of $\EMCG{S}$. More precisely, an element $f\in\EMCG{S}$ belongs to $\EMCG{P}$ if and only if it setwise preserves each collection of punctures coming from cone points of the same weight.
\end{lem}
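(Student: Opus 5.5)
The plan is to identify the two groups through their defining homeomorphism groups and then compare the isotopy relations. Let $\Sigma\subset P$ be the finite set of cone points, so that $S=P\setminus\Sigma$ and the cone points become punctures of $S$. The orbifold boundary $\partial P$ coincides with the topological boundary, since there are no mirrors in conical type. Moreover $\partial P$ is disjoint from $\Sigma$: cone points have preimages in the interior of $D_G$, because stabilisers of boundary lines contain no rotations.

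First I will build a homomorphism $\EMCG{P}\to\EMCG{S}$. A homeomorphism $h\in\operatorname{Homeo}(P)$ fixes $\partial P$ pointwise and permutes $\Sigma$, so it restricts to a homeomorphism of $S$ fixing $\partial S=\partial P$. An isotopy of $P$ relative to $\partial P$ and to the cone points also restricts to an isotopy of $S$. Hence restriction gives a well-defined map, which is clearly a homomorphism.

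Next I will describe the image. Conversely, any homeomorphism of $S$ fixing $\partial S$ extends uniquely to the one-point compactifications of the punctures coming from $\Sigma$. It therefore gives a homeomorphism of the underlying surface of $P$ permuting $\Sigma$. This extension is an element of $\operatorname{Homeo}(P)$ exactly when cone points are sent to cone points of equal weight. The condition depends only on the induced permutation of punctures, which is an isotopy invariant. So the image is precisely the subgroup of classes that setwise preserve each collection of punctures of a given weight. Since the permutation action on punctures has finite image, this subgroup has finite index, being the preimage of a subgroup of a finite symmetric group.

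Finally I will check injectivity, which is the only step with real content. Suppose the restriction of $h$ to $S$ is isotopic to the identity relative to $\partial S$. The isotopy $h_t$ of $S$ extends over the punctures: each $h_t$ extends to the compactification, and the extended family is continuous because it fixes each puncture throughout. This gives an isotopy of $P$ relative to $\partial P$ and the cone points. The delicate point is continuity of the extension at the filled-in points. I will handle it with the standard fact that homeomorphisms of a punctured surface extend to the filled surface, applied to the homeomorphism $S\times[0,1]\to S\times[0,1]$ given by $(x,t)\mapsto(h_t(x),t)$. Alternatively one may quote the usual identification of mapping class groups of punctured surfaces with those of surfaces with marked points, as in \cite{FM}. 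With injectivity in hand, the map is an isomorphism onto the subgroup described above, which proves the lemma.
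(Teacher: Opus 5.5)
Your argument is correct, and it is the routine restriction/extension argument the paper has in mind when it calls this lemma obvious; the paper gives no proof. One small slip: when $P=O'$, the surface $S$ also has punctures coming from the funnels $C\times[0,\infty)$, so a homeomorphism of $S$ extends over the cone points only if it maps the set of cone-point punctures to itself, not always as you claim; this is harmless, because your weight condition forces exactly that, so your description of the image stands.
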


\noindent The surface $S$ is of finite-type (i.e. it is obtained from a compact surface after removing finitely many punctures). By the classification of surfaces of finite-type (see, e.g. \cite{Classification_of_cpt_surf}), $S$ is homeomorphic to either:
\begin{itemize}
    \item $S^b_{g,p}$, an orientable surface of genus $g$ with $p$ punctures and $b$ boundaries;
    \item $N^b_{g,p}$, a non-orientable surface obtained from the connected sum of $g$ projective planes $\mathbb{RP}^2$ by removing $p$ punctures and $b$ open disks.
\end{itemize}
For later purposes, we note that the mapping class group of $N^b_{g,p}$ can be realised as a centraliser inside the mapping class group of an orientable surface:
\begin{thm}\label{thm:orientable_double_cover_centraliser}
Suppose that $(g, p, b) \not\in\{ (1, 0, 0), (2, 0, 0)\}$. The orientation double cover $S^{2b}_{g-1,2p}\to N^b_{g,p}$ induces an injective homomorphism $\MCG{N^b_{g,p}}\hookrightarrow \MCG{S^{2b}_{g-1,2p}}$ whose image consists of the orientation-preserving homeomorphisms which commute with the deck transformation. 
\end{thm}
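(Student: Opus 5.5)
Let $\pi\colon S=S^{2b}_{g-1,2p}\to N=N^b_{g,p}$ be the orientation double cover, with deck involution $J$; $J$ is fixed-point free and orientation-reversing. The plan is to build the homomorphism by lifting homeomorphisms, show its image is exactly the orientation-preserving mapping classes commuting with $[J]$, and show injectivity. Since the result is assembled in the literature from \cite{Birman_chillingworth,HopeTillmann,Lima_Guaschi_Maldonado,Katayama_Kuno}, the proof will be organised around these inputs rather than reproved from scratch.

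\emph{Construction.} Every homeomorphism $f$ of $N$ (fixing $\partial N$ pointwise) lifts to $S$, because the orientation cover is characteristic: it corresponds to the kernel of the orientation character $w\colon\pi_1(N)\to\Z/2$, and $w$ is preserved by every homeomorphism. There are exactly two lifts, $\tilde f$ and $J\tilde f$. Exactly one of them preserves the orientation of $S$, since $J$ reverses it; call it $\tilde f$. By construction $\tilde f$ commutes with $J$. Isotopies lift to isotopies, so this gives a well-defined homomorphism $\MCG{N}\to\MCG{S}$ into the centraliser $C$ of $[J]$ among orientation-preserving classes. Because boundary components of $S$ must be fixed pointwise, I need that the chosen lift fixes each lifted boundary circle rather than swapping the two preimages. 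I will handle this by checking that the orientation-preserving lift is the one that restricts to the identity near $\partial S$. Indeed, near a boundary circle the orientation cover is trivial, so the lift that is the identity there is orientation-preserving.

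\emph{Surjectivity onto $C$.} Let $\phi\in\MCG{S}$ commute with $[J]$ up to isotopy. The task is to find a representative that commutes with $J$ on the nose. This is the Birman--Hilden / Maclachlan--Harvey type statement: for the free involution $J$, an equivariant representative exists away from the sporadic cases. I would argue at the level of $\pi_1$ and Dehn--Nielsen--Baer, as follows.
\begin{itemize}
\item An outer automorphism of $\pi_1(S)$ commuting with the outer class of $J$ lifts to an automorphism of $\pi_1^{\mathrm{orb}}=\pi_1(N)$ normalising $\pi_1(S)$.
\item This is possible precisely when the centraliser of $\pi_1(S)$ in $\pi_1(N)$ is trivial. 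That holds because $\pi_1(S)$ is a non-elementary Fuchsian group with trivial centraliser in $\mathrm{Isom}(\Hyp)$, using \Cref{lem:nofinite_normal_sgr_orbifold}.
\item The resulting automorphism is then realised by a homeomorphism of $N$, using Dehn--Nielsen--Baer for non-orientable surfaces and its relative version with boundary and punctures.
\end{itemize}

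\emph{Injectivity.} Suppose $\tilde f$ is isotopic to the identity on $S$. Then the induced automorphism of $\pi_1(N)$ restricts to an inner automorphism on the index-two subgroup $\pi_1(S)$, say conjugation by $x$. For $y\in\pi_1(N)\setminus\pi_1(S)$, the element $f_*(y)y^{-1}x^{-1}$ centralises $\pi_1(S)$, hence is trivial. So $f_*$ is inner, and $f$ is isotopic to the identity by Dehn--Nielsen--Baer. The sporadic cases are exactly where this breaks down. For $N=\mathbb{RP}^2$ the cover is $S^2$, which is not hyperbolic and has trivial $\pi_1$. For the Klein bottle the cover is the torus, where the relevant centraliser is not trivial, and indeed $\MCG{N_{2}}\cong\Z/2\times\Z/2$ does not embed correctly.

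\emph{Main obstacle.} I expect the hardest part to be the boundary case. There, mapping classes are rel boundary, centralisers of $\pi_1(S)$ are trivial but Dehn--Nielsen--Baer must be replaced by its version for automorphisms preserving peripheral structure with fixed boundary. In addition, Dehn twists about two-sided boundary-parallel curves must be matched with the products of lifted twists about their two preimages. I would first try to reduce to the punctured case via the capping sequence and a five-lemma argument, citing \cite{Katayama_Kuno} for the boundary refinement.
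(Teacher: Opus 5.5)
Your proposal is correct in outline, but it takes a genuinely different route from the paper. The paper's proof only makes the construction explicit, exactly as you do: the orientation-preserving lift is unique because the cover has degree two and $J$ reverses orientation. For injectivity and the description of the image it simply cites \cite{Birman_chillingworth}, \cite{Szepietowski}, \cite{Lima_Guaschi_Maldonado} and \cite{Katayama_Kuno}, which cover the closed, bounded, punctured and mixed cases respectively.

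You instead actually prove the closed and punctured cases. The triviality of $C_{\pi_1(N)}(\pi_1(S))$ does two jobs. It lets you extend a $J$-commuting outer automorphism of $\pi_1(S)$ to $\pi_1(N)$. It also shows that an automorphism of $\pi_1(N)$ which is inner on $\pi_1(S)$ is inner. Dehn--Nielsen--Baer on $S$ and on $N$ then translates both facts into statements about mapping classes. This buys an explanation of where exceptions come from, namely $\pi_1(S)$ being trivial or abelian. The cost is that you must handle separately every case where one of the two ingredients fails, whereas the citations cover everything at once.

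Three points need attention.

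\emph{The injectivity computation.} The element that centralises $\pi_1(S)$ is $y^{-1}x^{-1}f_*(y)x$, not $f_*(y)y^{-1}x^{-1}$. Its triviality gives $f_*(y)=xyx^{-1}$, so $f_*$ is conjugation by $x$.

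\emph{The small cases.} The excluded cases are not the only ones where your centraliser argument breaks. For the M\"obius bands, $(g,p,b)=(1,1,0)$ and $(1,0,1)$, the cover is an annulus and $\pi_1(S)\cong\Z$ is central in $\pi_1(N)\cong\Z$. These cases are allowed by the hypothesis, so they must be checked directly; both sides are tiny there, so this is easy.

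\emph{The boundary case.} Here Dehn--Nielsen--Baer no longer sees boundary twists, and a bare five lemma does not give the image. After capping and applying the punctured case, you are left with a $J$-invariant element of the kernel $\langle T_{\eta_i},T_{\eta'_i}\rangle\cong\Z^{2b}$. You then need $JT_{\eta_i}J^{-1}=T_{\eta'_i}^{-1}$, which holds because $J$ reverses orientation. It follows that the $J$-fixed part of the kernel is exactly $\langle T_{\eta_i}T_{\eta'_i}^{-1}\rangle$, the image of the boundary twists of $N$. For injectivity you need exactness of the non-orientable capping sequence. Be careful here: the paper deduces precisely that exactness in \Cref{lem:capping_ext} from the present theorem. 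So you must import it independently, for instance from the Birman exact sequence for non-orientable surfaces, or else fall back on \cite{Katayama_Kuno} as the paper does.
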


\begin{proof} This is proven in \cite[Theorem 1]{Birman_chillingworth} for closed surfaces, in \cite[Lemma 3]{Szepietowski} for surfaces with boundaries, in \cite[Theorem 1.1]{Lima_Guaschi_Maldonado} for surfaces with punctures, and finally in \cite[Lemma 3.1]{Katayama_Kuno} for surfaces with boundaries and punctures. Explicitly, one can represent $S^{2b}_{g-1,2p}$ inside $\R^3$ in such a way that it is invariant under the antipodal map $J(x,y,z)=(-x,-y,-z)$, and then notice that $S^{2b}_{g-1,2p}\to S^{2b}_{g-1,2p}/J\cong N^b_{g,p}$ is a covering. The map $\MCG{N^b_{g,p}}\to \MCG{S^{2b}_{g-1,2p}}$ then corresponds to lifting a representative of a mapping class on $N^b_{g,p}$ to a orientation-preserving homeomorphism of $S^{2b}_{g-1,2p}$; the choice is unique as the covering has degree two and $J$ is orientation-reversing.
\end{proof}

\noindent The mapping class groups of the two orbifolds associated to a bounded Fuchsian group of conical type fit into the following central extension:
\begin{lem}[Capping extension]\label{lem:capping_ext}
Let $G$ be a bounded Fuchsian group of conical type, and let $O'=\Hyp/G$ and $O=D_G/G$ be the associated orbifolds. Let $\gamma_1,\ldots, \gamma_k$ be the boundary components of $O$, and denote by $T_i$ the Dehn twist along $\gamma_i$. The map $\MCG{O}\to \PMCG{O'}$, given by extending a mapping class on $O$ to the identity on $O'-O$, fits into a central extension
\[1\to \Z^k\to \MCG{O}\to \PMCG{O'}\to 1,\]
where $\Z^k$ is generated by $\{T_i\}_{i=1\ldots,k}$.
\end{lem}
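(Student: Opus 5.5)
The plan is to reduce everything to the classical capping sequence for surfaces, using \Cref{lem:just_surface_mcg}. Remove the cone points from $O$ and $O'$ to get finite-type surfaces $S$ and $S'$. Then $S$ has boundary circles $\gamma_1,\dots,\gamma_k$, and $S'$ is obtained by gluing a punctured disk onto each $\gamma_i$ (see \Cref{rem:funnels}). By \Cref{lem:just_surface_mcg}, $\MCG{O}$ and $\PMCG{O'}$ are the finite-index subgroups of the surface mapping class groups that preserve each weight class of cone punctures. Here $\PMCG{O'}$ also fixes each of the $k$ new punctures. (If $O$ is non-orientable, read $\MCG{}$ as the extended version. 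The argument is identical, since boundary curves are two-sided.)

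The steps, in order, are as follows.

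\textbf{Well-definedness and homomorphism.} A homeomorphism of $O$ that fixes $\partial O$ pointwise extends by the identity on $O'-O$. The extension fixes every new puncture and preserves the weights of cone points. Isotopies rel $\partial O$ extend as well. So the capping map $\MCG{O}\to\PMCG{O'}$ is a well-defined homomorphism.

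\textbf{Surjectivity.} Take $f\in\PMCG{O'}$. It fixes each puncture of $O'$ coming from $\partial O$. Isotope $f$ so that it is the identity on a small punctured-disk neighbourhood of each such puncture. The required isotopy of a homeomorphism fixing a puncture to one that is standard near it is a classical fact, as in \cite[Section 3.6]{FM}, and it can be carried out away from the cone points. The restriction of the resulting map to $O$ is then a preimage.

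\textbf{Kernel.} This is the classical capping/puncture exact sequence \cite[Proposition 3.19]{FM}, applied once for each boundary component. Filling a boundary circle with a once-punctured disk has kernel generated by the Dehn twist about that circle. Here cone points are treated as punctures, which is legitimate because the isotopies are rel cone points. Iterating over the $k$ components shows that the kernel is generated by $T_1,\dots,T_k$.

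\textbf{Centrality.} The twists $T_i$ are supported in collar neighbourhoods of $\partial O$. Every element of $\MCG{O}$ fixes $\partial O$ pointwise, so it can be isotoped to preserve each collar and act as the identity on it. Hence $fT_if^{-1}=T_{f(\gamma_i)}=T_i$, and the kernel is central.

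\textbf{Freeness of $\langle T_1,\dots,T_k\rangle$.} It remains to show that $\langle T_1,\dots,T_k\rangle\cong\Z^k$. The twists commute, so it is enough to show that the boundary curves are pairwise non-isotopic and essential in $S$, and are not isotopic to a puncture. Then the multitwist formula (e.g. via intersection numbers, \cite[Proposition 3.2]{FM}) gives that $\prod T_i^{n_i}$ is trivial only when all $n_i=0$. The curves are essential because $O$ is hyperbolic. The delicate case is when $S$ is an annulus or pants-like and a boundary curve might be homotopic to a cone point or to another boundary. This is excluded because $G$ is not virtually cyclic and is discrete without parabolics. For example, two boundary components cobounding an annulus would force $G\cong\Z$.

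\textbf{Main obstacle.} I expect the hardest step to be this last injectivity claim in low-complexity cases, together with making sure the $\Z^k$ really embeds. I would handle it by a direct check on the hyperbolic structure. Each $\gamma_i$ corresponds to a primitive loxodromic peripheral element of $G$. Distinct boundary components give non-conjugate cyclic subgroups, so no two $\gamma_i$ are freely homotopic. Twisting about such a curve acts nontrivially on arcs joining $\gamma_i$ to another boundary component or to itself, and such arcs exist since $O$ is not an annulus.
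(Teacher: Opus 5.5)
In the orientable case your route is the paper's: remove the cone points, pass to finite-index subgroups of surface mapping class groups, and invoke \cite[Proposition 3.19]{FM}. The step you call the main obstacle is not one. Iterating the capping sequence already gives an infinite cyclic kernel at each stage, so $\langle T_1,\dots,T_k\rangle$ is poly-$\Z$ of Hirsch length $k$ and generated by commuting elements, hence $\Z^k$. As written, that step would also fail: \cite[Proposition 3.2]{FM} with closed curves cannot detect boundary twists, since every closed curve has zero intersection with $\gamma_i$, so arcs are needed.

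The genuine gap is the non-orientable case, which you dismiss with ``the argument is identical, since boundary curves are two-sided''. It is not identical, and your surjectivity step fails there. On a non-orientable surface a mapping class can fix a puncture $x$ while reversing the local orientation at $x$, for example by sliding $x$ once around a one-sided loop. Such a class cannot be isotoped to the identity near $x$. It is also not in the image of capping, because a homeomorphism fixing $\gamma_i$ pointwise preserves local orientation near $\gamma_i$. With your reading of $\MCG{}$ as the extended group, the statement is therefore false. In the non-orientable case, $\PMCG{O'}$ has to mean the classes preserving local orientation at each puncture coming from $\partial O$. The paper builds this in implicitly by requiring the bottom-right group to embed in $\MCG{S^{0}_{g-1,2p+2k},*}$, i.e.\ lifts fix all $2k$ punctures. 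It is also what matches $\PMCG{Q_v}$ in \Cref{lem:surface_case}, where boundary subgroups go to conjugates of themselves, not of their inverses.

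The kernel claim is likewise not covered by \cite[Proposition 3.19]{FM}, which is proved for orientable surfaces. Its fibration argument must be redone, and the new component of the fibre is exactly this local-orientation issue. The paper avoids this by passing to the orientation double cover via \Cref{thm:orientable_double_cover_centraliser}. There the orientable sequence has kernel $\langle T_{\eta_i},T_{\eta_i'}\rangle\cong\Z^{2k}$. Since the deck involution $J$ reverses orientation, $JT_{\eta_i}J^{-1}=T_{\eta_i'}^{-1}$, so the $J$-invariant part of that kernel is exactly $\langle T_{\eta_i}T_{\eta_i'}^{-1}\rangle$, the image of $\langle T_i\rangle$. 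For surjectivity, an arbitrary preimage differs from its $J$-conjugate by some $t$ in the kernel with $t\,(JtJ^{-1})=1$, i.e.\ $t\in\langle T_{\eta_i}T_{\eta_i'}\rangle$. Any such $t$ has the form $s\,(JsJ^{-1})^{-1}$ for $s$ in the kernel, so the preimage can be corrected to a $J$-equivariant one. You need either this argument or a genuine non-orientable capping sequence to close the proof.
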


\begin{proof} Let $S$ (resp. $S'$) be the surface obtained from $O$ (resp. $O'$) by removing the singular locus, and let $\MCG{S',*}$ be the finite-index subgroup of $\MCG{S'}$ of all mapping classes which fix each puncture coming from a boundary component of $S$, but are allowed to permute punctures coming from cone points. Since $\MCG{O}$ (resp. $\PMCG{O'}$) sits as a finite-index subgroup of $\MCG{S}$ (resp. $\MCG{S',*}$), it is enough to prove the existence of a central extension 
\[1\to \Z^k\to \MCG{S}\to \MCG{S',*}\to 1.\]
If $S$ is orientable, this is precisely the capping exact sequence from e.g. \cite[Proposition 3.19]{FM}. In the non-orientable case, suppose that $S=N^k_{g,p}$, so that $S'=N^0_{g,p+k}$. By the explicit construction in \Cref{thm:orientable_double_cover_centraliser}, we get a commutative diagram 
$$\begin{tikzcd}
   1\ar{r}& \Z^{2k}\ar{r}& \MCG{S^{2k}_{g-1,2p}}\ar{r}& \MCG{S^{0}_{g-1,2p+2k},*}\ar{r}&1\\
    1\ar{r}&\langle T_i\rangle_{i=1,\ldots, k}\ar{r}\ar[u,hook, "i"]& \MCG{N^{k}_{g,p}}\ar[u,hook]\ar{r}& \MCG{N^{0}_{g,p+k},*}\ar[u,hook]\ar{r}&1
\end{tikzcd}$$
where $\MCG{S^{0}_{g-1,2p+2k},*}$ are all mapping classes fixing the $2k$ punctures coming from the boundaries of $S^{2k}_{g-1,2p}$, and $i$ maps $\langle T_{i}\rangle$ to $\langle T_{\eta_i}T_{\eta'_i}^{-1}\rangle$, where $\eta_i,\eta'_i$ are the two boundary curves which lift $\gamma_i$. Then the conclusion follows from the fact that the sequence above is central and exact, and therefore so is the sequence below.
\end{proof}

\noindent The central extension from \Cref{lem:capping_ext} is almost never trivial (i.e., a product). Take for example $S=S^k_{g,0}$, so that $S'=S^0_{g,k}$. If the extension were trivial, the first homology group $H_1(\MCG{S},\Z)$ would contain a $\Z^k$ factor by the K\"{u}nneth formula (see e.g. \cite[Theorem 3B.5]{Hatcher_algtop}), contradicting the fact that $H_1(\MCG{S},\Z)$ is finite as long as $g\ge2$ \cite[Theorem 5.1]{Korkmaz_Abelianization}. However:

\begin{lem}\label{lem:mcg_central_and_bounded}
    The extension from \Cref{lem:capping_ext} is bounded.
\end{lem}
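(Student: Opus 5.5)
By \Cref{bounded via qm:Abelian_ker}, it suffices to construct a quasihomomorphism $\chi\colon \MCG{O}\to \Z^k=\langle T_1,\dots,T_k\rangle$ restricting to the identity on $\Z^k$. I will build one quasimorphism $\chi_i\colon \MCG{O}\to\Z$ for each boundary component $\gamma_i$, with $\chi_i(T_j)=\delta_{ij}$, and set $\chi=(\chi_1,\dots,\chi_k)$. A product of quasimorphisms into the coordinates is a quasihomomorphism, and it is the identity on $\Z^k$ provided each $\chi_i$ restricts on $\Z^k$ to the $i$-th coordinate projection. If each $\chi_i$ is homogeneous with $\chi_i(T_j)=\delta_{ij}$, this holds exactly: the $T_j$ commute, and a homogeneous quasimorphism restricted to an abelian subgroup is a homomorphism. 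Indeed, it is a homogeneous quasimorphism on an amenable group, hence a homomorphism. Alternatively one can argue directly: $\chi_i(a+b)-\chi_i(a)-\chi_i(b)$ is bounded and homogeneous in $(a,b)$, so it vanishes.

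First I reduce to surfaces, as in the proof of \Cref{lem:capping_ext}. $\MCG{O}$ is a finite-index subgroup of $\MCG{S}$, where $S$ is $O$ with cone points removed. Restricting a quasimorphism on $\MCG{S}$ to $\MCG{O}$ is still a quasimorphism, and it contains all the $T_i$. So it is enough to treat $\MCG{S}$.

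For orientable $S$, I use the annular curve graph construction of \cite[Section 2.4]{masurminsky2}. For each boundary curve $\gamma_i$, consider the annular cover $\widetilde S_i$ of $S$ associated to $\gamma_i$. Since $\gamma_i$ is a boundary component, every mapping class fixes it pointwise, and therefore lifts canonically to $\widetilde S_i$ fixing the boundary lift. This gives an action of $\MCG{S}$ on the annular curve graph $\mathcal{C}(\gamma_i)$, which is a quasiline. The action fixes the two ends, because mapping classes preserve orientation and fix $\gamma_i$ pointwise. $T_i$ acts loxodromically, with translation length $1$ up to normalization, while $T_j$ for $j\ne i$ is supported away from $\gamma_i$ and acts with bounded orbits. \Cref{ex:busemann} then yields a homogeneous quasimorphism $\phi_i$ with $\phi_i(T_j)=0$ for $j\neq i$ and $\phi_i(T_i)=c\ne 0$. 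To land in $\Z$ rather than $\R$, I take $\chi_i=\lfloor \phi_i/c\rfloor$, which is a bounded perturbation and hence still a quasimorphism. On $\Z^k$, the map $\phi_i/c$ is exactly the $i$-th coordinate and so is integer-valued. Thus $\chi=(\chi_i)$ is the identity on $\Z^k$.

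The non-orientable case, where $S=N^k_{g,p}$, is the step I expect to require most care. The plan is to pull back along the embedding of \Cref{thm:orientable_double_cover_centraliser}, $\MCG{N^k_{g,p}}\hookrightarrow\MCG{S^{2k}_{g-1,2p}}$, which sends $T_i$ to $T_{\eta_i}T_{\eta_i'}^{-1}$. Composing the quasimorphism $\phi_{\eta_i}$ of the orientable cover with this embedding gives a homogeneous quasimorphism on $\MCG{S}$. Its value on $T_i$ is $\pm c$, which is nonzero, and its value on $T_j$ for $j\neq i$ is $0$, since those twists are supported away from $\eta_i$. Rescaling and rounding as before then finishes the argument. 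The points to double-check are that the signs do not cancel, which is fine because only $\eta_i$ contributes to $\phi_{\eta_i}$, and that the sporadic cases excluded from \Cref{thm:orientable_double_cover_centraliser} are irrelevant. The latter holds because here $k\ge1$; if $k=0$ the statement is trivial.
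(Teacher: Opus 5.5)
Your proposal is correct and follows essentially the paper's proof: you reduce to surfaces without cone points, pass to the orientable double cover in the non-orientable case, and build the quasihomomorphism from rescaled, rounded Busemann quasimorphisms on the annular curve graphs of the boundary curves. The differences are cosmetic. You pull back the quasimorphisms directly instead of invoking \Cref{cor:subext_of_bounded_is_bounded}, and you explicitly check that the product map is exactly the identity on $\Z^k$, using homogeneity on the abelian group $\langle T_1,\dots,T_k\rangle$ and $\phi_i(T_j)=0$ for $j\neq i$. The paper leaves that last point implicit.
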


\begin{proof}
Since boundedness passes to sub-extensions by \Cref{cor:subext_of_bounded_is_bounded}, we can assume $O$ has no cone points, up to replacing it by the underlying $S$ with punctures replacing cone points. By the same \Cref{cor:subext_of_bounded_is_bounded}, the commutative diagram from the proof of \Cref{lem:capping_ext} lets us reduce to showing that the extension
\[\Z^{k}\to\MCG{S^{k}_{g,p}}\to \MCG{S^{0}_{g,p+k},*}\]
is bounded. For every curve $\gamma_i$ corresponding to a circular boundary component, let $\C{\gamma_i}$ be the associated \emph{annular curve graph}, as in \cite[Section 2]{masurminsky2}, which is a quasiline on which $\MCG{S^{k}_{g,p}}$ acts without inverting the ideal endpoints and where the Dehn twist around $\gamma_i$ acts loxodromically. Let $\phi_i\colon \MCG{O}\to \R$ be the Busemann quasimorphism associated to the action, as in \Cref{ex:busemann}, which we can rescale to map $T_i$ to $1$. The integer part of $\phi_i$ is therefore a quasimorphism $\psi_i\colon \MCG{O}\to \Z$ which identifies $\langle T_i\rangle$ with $\Z$. Taking the product over all boundary components yields a quasihomomorphism $\psi=(\psi_1,\ldots,\psi_n)\colon \MCG{O}\to \Z^n$ which is the identity on $\Z^n\cong\langle T_i\rangle_{i=1,\ldots,n}$, so the extension is bounded by \Cref{bounded via qm:Abelian_ker}.
\end{proof}

\noindent We finally recall how one can relate the mapping class group of a punctured orbifold $O'=\Hyp/G$ with the \emph{type-preserving} outer automorphisms of $G$: 

\begin{defn}[Type preserving automorphisms] Let $G$ be a bounded Fuchsian group. An automorphism $\phi\in\Aut{G}$ is \emph{type-preserving} if it preserves types of elements, as defined in \Cref{rem:types}, also distinguishing when a loxodromic has an axis in the interior or on the boundary of $D_G$. Equivalently, an automorphism is type-preserving if it preserves the conjugacy classes of peripheral subgroups. We denote by $\Outstar{G}$ the quotient of type-preserving automorphisms of $G$ by inner automorphisms (which are clearly type-preserving, in view of the equivalent formulation). 
\end{defn}

\noindent Suppose that $G$ is of conical type, and let $O'$ be the associated punctured orbifold. We can identify $G=\piorb{O'}$ with the group of deck transformations $\Aut{\pi\colon \Hyp\to O'}$ of the branched covering: see e.g. \cite[Proposition 4.6.4]{Orbifold_choi}.

Now fix a basepoint $x_0\in O'$ which is not a cone point, and let $\widetilde{x}_0\in \Hyp$ be a lift of $x_0$. Let $S$ be the surface obtained by removing cone points from $O'$, so that the branched covering $\pi\colon \Hyp\to O'$ restricts to a regular covering over $S$. Given a mapping class $\phi\in \EMCG{O'}$, choose a representative $f\in \operatorname{Homeo}(O')$ fixing $x_0$: this can always be achieved, up to isotopy. By regular covering theory we can lift $f|_{S}$ to an homeomorphism $f'$ of $\pi^{-1}(S)$ fixing $\widetilde{x}_0$; then, arguing as in \cite{Maclachlan_OrbifoldMCG}, one can extend $f'$ to a homeomorphism $\widetilde{f}\colon \Hyp\to \Hyp$ which lifts $f$. 

For every $\gamma\in G$, the conjugation $\widetilde{f}_*(\gamma)=\widetilde{f}\gamma\widetilde{f}^{-1}\colon \Hyp\to \Hyp$ is a deck transformation for $\pi$, and is therefore an element of $G$. Hence $f$ defines an automorphism $\widetilde{f}_*\in \Aut{G}$. Changing representative $f$ for $\phi$, or changing the basepoints $x_0$ and $\widetilde{x}\in \pi^{-1}(x_0)$, gives elements of $\Aut{G}$ which differ by an inner automorphism; hence we get a well-defined map $\EMCG{O'}\to \Out{G}$, independent on the choices we made, which is easily seen to be a group homomorphism. Since elements of $\EMCG{O}$ are required to preserve weights of cone points and permute punctures, the above map lands in the type-preserving outer automorphisms of $G$. Summing up, we get a well-defined homomorphism
\[\EMCG{O'}\to \Outstar{G}.\]

\begin{rem}
    In the case where $G$ is torsion-free, so that $O'$ is a surface with punctures, the above construction recovers the usual map $\EMCG{O'}\to \Outstar{G}$ induced by the action on the fundamental group: every mapping class sends closed curves to closed curves, and therefore acts on the conjugacy classes of elements of $\pi_1(O')$ (see e.g. \cite[Section 8.1]{FM}).
\end{rem}

\noindent We now state the Dehn-Nielsen-Baer theorem in the language of our setup. 

\begin{thm}[Dehn-Nielsen-Baer]\label{thm:DNB}
Let $G$ be a bounded Fuchsian group of conical type, and let $O'=\Hyp/G$ be the associated punctured orbifold. The map $\EMCG{O'}\to \Outstar{G}$ defined above is an isomorphism.
\end{thm}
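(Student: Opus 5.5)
The plan is to reduce to the classical Dehn--Nielsen--Baer theorem for punctured surfaces, using \Cref{lem:just_surface_mcg} and the branched covering $\pi\colon\Hyp\to O'$. Let $S$ be the surface obtained from $O'$ by removing the cone points, and let $\Gamma=\pi_1(S)$. The branched covering $\Hyp\to O'$ restricts to a regular covering $\pi^{-1}(S)\to S$ with deck group $G$. This gives an epimorphism $\Gamma\to G$ whose kernel $N$ is normally generated by the powers $c_j^{m_j}$, where $c_j$ is a loop around the $j$-th cone point and $m_j$ is its weight. The classical theorem (\cite[Theorem 8.8]{FM} and its non-orientable and punctured analogues) identifies $\EMCG{S}$ with $\Outstar{\Gamma}$, the outer automorphisms preserving the set of conjugacy classes of the peripheral cyclic subgroups.

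\textbf{Injectivity.} Suppose $\phi\in\EMCG{O'}$ induces an inner automorphism of $G$. After composing the lift $\widetilde f$ with a deck transformation, we may assume $\widetilde f$ commutes with every element of $G$. I will then argue that $f$ acts trivially on $\Outstar{\Gamma}$, and conclude by the classical theorem applied to $S$ together with \Cref{lem:just_surface_mcg}.

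The key input is that a finite-order element of $G$ determines a unique cone point. Each such element fixes a unique point of $\Hyp$, and its conjugacy class of maximal cyclic subgroups determines the corresponding cone point. So $f$ fixes each cone point, and also each puncture, since parabolics are absent and punctures correspond to boundary loxodromics. For a simple closed curve $\alpha$ on $S$, its image in $G$ is freely homotopic to $f(\alpha)$. To pass from ``same conjugacy class in $G$'' to ``isotopic in $S$'' I plan to use hyperbolic geometry of $O'$: realise the curves as geodesics in the orbifold, and use the fact that distinct isotopy classes of essential non-peripheral curves in $S$ give distinct closed orbifold geodesics, unless they bound a disk containing at most one cone point, in which case they are trivial or peripheral. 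Hence $f$ fixes every isotopy class of curves in $S$, so it is trivial in $\EMCG{S}$ up to the standard finitely many exceptions, which I will handle directly.

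\textbf{Surjectivity.} Given a type-preserving $\theta\in\Aut{G}$, I first realise it geometrically. Let $X\subseteq\Hyp$ be the union of the fixed points of torsion elements, a discrete $G$-invariant set. Since $\theta$ preserves types, it maps torsion to torsion and preserves the conjugacy classes of elliptic subgroups together with their rotation orders. Choose a $\theta$-equivariant quasi-isometry $\Hyp\to\Hyp$ (it exists because $G$ acts cocompactly on $D_G$, extended across the funnels). It induces a $\theta$-equivariant boundary homeomorphism $\partial\Hyp\to\partial\Hyp$, which sends the boundary arcs of $D_G$ to boundary arcs.

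Next I will build a $\theta$-equivariant homeomorphism $\widetilde h$ of $\Hyp$ with $\widetilde h(X)=X$. Then $\widetilde h$ descends to a homeomorphism $h$ of $O'$ preserving cone points and weights, and by construction $h$ maps to $[\theta]$. An alternative route is to lift $\theta$ to $\Gamma$, using that $\theta$ maps each $c_j$ to a conjugate of some $c_{j'}^{\pm1}$ with matching order; realise the lift on $S$ by the classical theorem; and check that the resulting homeomorphism respects the weights. I expect \textbf{this lifting step to be the main obstacle}, since automorphisms of $G$ need not obviously lift to $\Gamma$. I would first try to lift via the standard one-relator presentation of $G$, sending each generator to a preimage chosen according to $\theta$, and then check that the relations are preserved modulo $N$, correcting by the peripheral structure. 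If that fails, I fall back on the equivariant-boundary-map construction above, with Maclachlan's extension argument \cite{Maclachlan_OrbifoldMCG} to pass from $\partial\Hyp$ to a homeomorphism of $\Hyp$.
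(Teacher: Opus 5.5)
The paper gives no argument of its own here. It cites Farb--Margalit for closed surfaces, Maclachlan and Zieschang for orientable orbifolds (including the punctured case), and Fujiwara for the non-orientable case. Your proposal is a genuinely different, direct route. As written, however, both orbifold-specific steps are asserted rather than proved, and one of your two surjectivity routes rests on a premise you are not entitled to.

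\textbf{Surjectivity.} Drop the lifting route. It assumes that $\theta$ sends each $c_j$ to a conjugate of $c_{j'}^{\pm1}$. Type preservation only gives that $\theta(\langle c_j\rangle)$ is a maximal finite subgroup, so $\theta(c_j)$ is conjugate to $c_{j'}^{k}$ with $m_{j'}=m_j$ and $\gcd(k,m_j)=1$. That $k\equiv\pm1\pmod{m_j}$ is a consequence of the theorem, not an input. Moreover, $G$ is not one-relator (the $c_j^{m_j}$ are relators too). Lifting images of generators to $\Gamma$ gives at best an endomorphism of $\Gamma$, with no reason to be an automorphism, so ``correcting by the peripheral structure'' is not an argument.

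Your boundary route is the right one, and it closes up cleanly. Type preservation is used exactly when you extend the $\theta$-equivariant homeomorphism of $\Lambda(G)$ across the complementary arcs of $\partial\Hyp$, each of which is stabilised by a boundary subgroup. To see why it is needed, take $\Z/5*\Z/5$ (a disk with two cone points of order $5$). The automorphism $c_1\mapsto c_1^2$, $c_2\mapsto c_2^2$ sends the boundary element $c_1c_2$ to an interior loxodromic, and the extension fails. Once you have the circle homeomorphism, take a conformally natural extension $\widetilde h$ of it (e.g.\ Douady--Earle).
\begin{itemize}
\item Naturality makes $\widetilde h$ automatically $\theta$-equivariant.
\item Equivariance forces $\widetilde h(\mathrm{Fix}(g))=\mathrm{Fix}(\theta(g))$ for every elliptic $g$, so $\widetilde h(X)=X$ with stabiliser orders preserved.
\item Comparing rotation numbers of $c_j$ and $\theta(c_j)$ on $\partial\Hyp$ gives $k\equiv\pm1$ with a global sign, which is exactly what the lifting route had to assume.
\end{itemize}

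\textbf{Injectivity.} Everything rests on the claim that non-isotopic essential, non-peripheral curves of $S$ have non-conjugate images in $G$. This is the real orbifold content of injectivity. It replaces the Baer--Epstein step, which you cannot apply directly: an equivariant homotopy in $\Hyp$ descends to a homotopy of $O'$ that may move points across cone points, not to a homotopy of $S$. In your proposal the claim is only asserted. Your ``unless'' clause is also vacuous, since curves bounding a disk with at most one cone point are already inessential or peripheral in $S$.

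You need a lifting argument along these lines:
\begin{itemize}
\item Take $\langle g\rangle$-invariant lifts $\tilde\alpha,\tilde\beta$ with common endpoints.
\item Use that the full preimage of a simple curve of $S$ is a family of pairwise disjoint lines in $\Hyp\setminus X$.
\item Show that no point of $X$ lies between $\tilde\alpha$ and $\tilde\beta$: a rotation about such a point would make some translate of $\tilde\alpha$ or $\tilde\beta$ cross it.
\item Only then build the isotopy in $S$.
\end{itemize}
Curves bounding two cone points of order $2$, whose geodesic representative is a doubled arc, need extra care.

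The sporadic cases are not a formality either. When $S$ is a pair of pants (triangle orbifolds and their versions with funnels), $S_{1,1}$, $S_{1,2}$, $S_2$, or a small non-orientable surface, nontrivial curve-fixing classes exist. A uniform way to dispose of them:
\begin{itemize}
\item Realise such a finite-order class by an isometry, for a suitable Fuchsian realisation of $O'$, with a lift $R$ normalising $G$.
\item If $R$ acts on $G$ as conjugation by $g$, then $g^{-1}R$ centralises the non-elementary group $G$ in $\mathrm{Isom}(\Hyp)$.
\item So $g^{-1}R$ fixes every fixed point of a loxodromic and is trivial. Hence $R\in G$ and the class is trivial.
\end{itemize}
Orbifold effects genuinely appear here. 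On a torus with one cone point of order $m$, the hyperelliptic involution lifts to the rotation by $\pi/m$ about a cone-point lift, not by $\pi$; the rotation by $\pi$ would lie in $G$ when $m$ is even.
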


\begin{proof}
    The classical version of the Dehn-Nielsen-Baer theorem concerns \emph{closed} orientable surfaces, and its proof can be found in e.g. \cite[Chapter 8]{FM}. For orientable orbifolds, we refer to \cite{Maclachlan_OrbifoldMCG} and \cite{Zieschang}, who also treat the punctured case. These results were then generalised in \cite[Section 3]{Fujiwara} to the non-orientable setting.
\end{proof}

\begin{rem}
    Though we won't need it, we also mention that Elysia Wang recently generalised \Cref{thm:DNB}, proving an isomorphism between the mapping class group of an orientable surface \emph{with boundary} and its fundamental \emph{groupoid}, with one basepoint on each boundary component \cite{wang2026dehnnielsenbaertheoremboundedsurfaces}.
\end{rem}

\begin{rem}[Beyond conical type]
    In the literature, mapping class groups have been defined also for several orbifolds of non-conical-type. For instance, Fujiwara \cite{Fujiwara} considers orbifolds with mirrors but without corner reflectors. With his definition, mapping classes are required to preserve boundary components, mirrors, and cone points with the same weight, all setwise; in other words, his mapping class group is the same as our $\MCG{O'}$, after one conflates circular mirrors and circular boundary components, and in particular it is a finite-index subgroup of the surface obtained by removing boundaries and cone points, similarly to our \Cref{lem:just_surface_mcg}. In the same paper, Fujiwara also proved a version of the Dehn-Nielsen-Baer theorem for these orbifolds. Though Fujiwara's setting extends ours, we preferred to restrict our exposition to the minimal setting that we shall need later, in the interest of conciseness.
\end{rem}

\subsection{Bass-Serre theory}\label{subsec:BS_theory}
In this Subsection we recall the definition of a graph-of-groups decomposition and the associated Bass-Serre tree, referring to \cite{trees} or \cite[Section 16]{bogopolski} for further details.
\begin{defn}[Graph-of-groups]
A \emph{bidirected graph} $\Gamma$ consists of two sets $V,E$ together with maps
\begin{align*}
    E&\to V\times V; &\quad E&\to E;\\
    e&\to (i(e),t(e)) &\quad e&\to \overline{e}
\end{align*}
satisfying $\overline{\overline{e}}=e$, $e\neq \overline{e}$, and $i(\overline{e})=t(e)$. To parse this definition, the reader should think of $V$ as the set of vertices; $E$ as the set of oriented edges, which come in pairs; $\overline{e}$ as the opposite edge of $e$, and $i(e),t(e)$ as the initial and terminal vertices of $e$. 

A \emph{graph-of-groups} is the data of:
\begin{itemize}
    \item a finite bidirected graph $\Gamma$, with vertices $V$ and directed edges $E$;
    \item a collection of \emph{vertex groups} $\{G_v\}_{v\in V}$;
    \item for every edge $e\in E$, an \emph{edge group} $G_e$, satisfying $G_e=G_{\overline e}$, together with an injective homomorphism $\tau_e\colon G_e\to G_{t(e)}$.
\end{itemize}
For every $v\in V$ let $E_v$ be the collection of edges whose initial endpoint is $v$. With a little abuse of notation, we often implicitly identify $G_e$ with $\tau_{\overline{e}}(G_e)$ and regard it as a subgroup of $G_v$.

Let $Y$ be a spanning tree inside $\Gamma$. The \emph{fundamental group} of $\mathcal G$ based at $Y$ is the group $\pi_1(\mathcal G, Y)$ obtained from the free product
\[\left(\bigast_{v\in V}G_v\right)*\left(\bigast_{e\in E} \langle x_e\rangle\right)\]
by adding the following relations:
\begin{itemize}
    \item $x_e=x_{\overline e}^{-1}$;
    \item $x_e=1$ if $e$ is an edge of $Y$;
    \item For all $g\in G_e$, $x_e \tau_e(g) x_e^{-1}=\tau_{\overline{e}}(g)$.
\end{itemize}
By \cite[Corollary 16.7]{bogopolski}, up to isomorphism the fundamental group does not depend on the choice of the spanning tree, so we just denote it by $\pi_1(\mathcal G)$.

A group $G$ \emph{splits} over a family of subgroups $\mathcal A$ if $G$ is isomorphic to the fundamental group of a graph of groups $\mathcal G$ whose edge groups are in $\mathcal A$. In this case we say that $\mathcal G$ is a \emph{splitting} of $G$ over $\mathcal A$. A splitting is \emph{relative} to a family of subgroups $\mathcal P$ if every $P\in\mathcal P$ is conjugated into a vertex group. A splitting is \emph{trivial} if $G$ coincides with one of the vertex groups.
\end{defn}

For later purposes, we recall the following pioneering result of Stallings, which one can take as the definition of one-endedness for groups:
\begin{thm}[{\cite{Stallings}}]\label{thm:stallings}
    A finitely generated, infinite group $G$ is one-ended if and only if it does not split over finite subgroups.
\end{thm}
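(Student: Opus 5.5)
The statement is Stallings' theorem relating one-endedness to splittings over finite subgroups. I will treat the forward direction by Bass-Serre theory and the reverse direction by quoting Stallings' ends theorem, which is the genuinely deep input. Throughout, $G$ is finitely generated and infinite; recall that $G$ is one-ended if some (equivalently, any) Cayley graph $X$ of $G$ has exactly one end.

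\emph{Splitting implies more than one end.} Suppose $G=\pi_1(\mathcal G)$ is a non-trivial splitting over finite subgroups, and let $T$ be the Bass-Serre tree. The action of $G$ on $T$ has no global fixed point, so some edge $e$ of $T$ separates $T$ into two half-trees $T_1,T_2$, each containing vertices in infinitely many $G$-orbit translates. I will fix a base vertex $v\in T$ and consider the $G$-equivariant orbit map $g\mapsto gv$. Let $A_j$ be the set of $g\in G$ with $gv\in T_j$. Both $A_j$ are infinite: since the action has no fixed point, $G$ contains a hyperbolic element whose axis crosses $e$, and the positive and negative powers of this element land on opposite sides.

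For each generator $s$, the pairs $(g,gs)$ with $gv$ and $gsv$ on opposite sides of $e$ correspond to paths $[gv,gsv]$ crossing $e$. These paths have length at most $\max_s d(v,sv)$, so $g$ lies in finitely many cosets of the stabiliser $G_e$. As $G_e$ is finite, only finitely many edges of $X$ join $A_1$ to $A_2$. Removing this finite set from $X$ leaves at least two infinite components, hence $G$ has more than one end. Therefore a one-ended group admits no non-trivial splitting over finite subgroups.

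\emph{More than one end implies a splitting.} If $G$ is infinite and not one-ended, it has either two or infinitely many ends, since Hopf's theorem rules out other finite values greater than one. If $G$ is two-ended, it is virtually $\Z$, and therefore maps onto $\Z$ or onto $D_\infty$ with finite kernel $F$. In the first case it is an HNN extension over $F$; in the second it is an amalgam $A*_F B$ of two finite groups. Both are non-trivial splittings over finite subgroups.

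The infinitely-ended case is exactly Stallings' structure theorem, and this is the main obstacle. Its proof requires constructing an almost-invariant set with finite coboundary, extracting a nested family of cuts from it, and building a $G$-tree from that family, for instance via the Dunwoody track or cut-nesting argument. I would not reprove this, and would cite \cite{Stallings} directly. Combining the two directions gives the equivalence.
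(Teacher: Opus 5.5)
Your proposal is correct and consistent with the paper, which gives no proof and simply cites \cite{Stallings}. Like the paper, you rely on Stallings for the deep direction; in addition you correctly prove the easy direction via the Bass--Serre tree (only finitely many Cayley-graph edges cross the cut because $G_e$ is finite), and you correctly handle the two-ended case through the surjection onto $\Z$ or $D_\infty$. One point to tidy is the order of choices. First pick a hyperbolic element $h$, which exists since $G$ is finitely generated and acts without global fixed point, and only then take $e$ on its axis. An arbitrary edge need not work: for example, an edge leading to a leaf of a non-minimal Bass--Serre tree does not have infinitely many orbit points on both sides.
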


\begin{defn}[Bass-Serre tree]
    Given a graph of groups decomposition $\mathcal G$ of a group $G$, the associated \emph{Bass-Serre tree} is the bidirected tree whose vertices are the $G$-cosets of vertex groups, and for every edge $e=(v,w)\in E$ and $g\in G$ there is an oriented edge $gG_e=(gG_v,gx_eG_w)$. We often think of $T$ as the underlying undirected graph, forgetting edge orientations. There is a $G$-action on $T$ by simplicial isometries and without edge inversions, defined by $h\cdot gG_v=hgG_v$ for every $v\in V$ and $g,h\in G$. Notice that the stabiliser of $gG_v$ is $gG_vg^{-1}$ for every $v\in V$, and similarly $\text{Stab}({gG_e})=g\tau_{\overline{e}}(G_e)g^{-1}=gx_e\tau_{e}(G_e)x_e^{-1}g^{-1}$ for every $e\in E$.
\end{defn}
\begin{defn}
    A subgroup $H\le G$ is \emph{elliptic} in a decomposition $\mathcal G$ if it is conjugate to a subgroup of a vertex group, or equivalently if it fixes a vertex in the Bass-Serre tree.
\end{defn}
\begin{rem}[Canonical splitting]
    If vertex groups coincide with their normalisers in $G$, we can equivalently see the Bass-Serre tree as a graph whose vertices are \emph{conjugates} of the vertex groups, and where $G$ acts by \emph{conjugations}. In particular, the action factors through the central quotient $G/Z(G)$, identified with the group of inner automorphisms $\Inn{G}\le \Aut{G}$. We further say that the splitting is \emph{canonical} if the $\Inn{G}$-action on $T$ extends to an $\Aut{G}$-action by simplicial isometries; if this happens, every automorphism permutes the conjugacy classes of vertex groups.
\end{rem}

\subsection{JSJ trees of cylinders for hyperbolic groups}\label{subsec:jsj}
We conclude the background section with some properties of the \emph{$\mathcal{Z}$-JSJ tree of cylinders} of a one-ended hyperbolic group $G$, and the structure that this splitting induces on $\Out{G}$.

\begin{thm}\label{thm:JSJ}
    Let $G$ be a one-ended hyperbolic group, and let $\mathcal Z$ be the family of its virtually $\Z$ subgroups with infinite centre. There exists a canonical splitting $\mathcal G$ of $G$ over $\mathcal Z$, whose vertex groups $\{G_v\}_{v\in E(\Gamma)}$ come in three pairwise exclusive types:
\begin{enumerate}
    \item \label{item:elm_vertex_sgr} \emph{Elementary vertex subgroups}: $G_v$ is a maximal virtually $\Z$ subgroup of $G$.
    \item \label{item:QH_sgr} \emph{Quadratically hanging subgroups}: $G_v$ fits in an extension
    \[1\to K_v\to G_v\xrightarrow{\pi_v} Q_v\to 1,\]
    where $K_v$ is finite and $Q_v$ is a bounded Fuchsian group of conical type.
    The edge groups incident to $G_v$ coincide with the preimages in $G_v$ of the boundary subgroups of $Q_v$.
    \item \label{item:rigid_sgr} \emph{Rigid vertex subgroups}: $G_v$ is a non-elementary quasiconvex subgroup (not of type \ref{item:QH_sgr}), and admits no splitting over $\mathcal Z$ relative to the edge groups.
\end{enumerate}  We denote by $V_i$ for $i=1,2,3$ the disjoint subsets of $V=V(\Gamma)$ corresponding to the three types of vertices above. Furthermore, every edge of $\Gamma$ connects a vertex in $V_1$ to a vertex in $V_2\cup V_3$.
\end{thm}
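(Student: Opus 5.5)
This statement is essentially a repackaging of known results, so the plan is to assemble it from the literature rather than prove it from scratch, checking at each step that the conventions match ours. The starting point is Bowditch's JSJ decomposition \cite{Bowditch_JSJ} of the one-ended hyperbolic group $G$ over virtually cyclic subgroups. It is canonical, since it is built from the local cut point structure of $\partial G$, which every automorphism preserves. Its vertex groups are maximal elementary, rigid, or ``hanging Fuchsian''. The last type are finite-by-bounded-Fuchsian in Bowditch's sense, so in our convention (\Cref{defn:Fuchsian}) they are finite extensions of bounded Fuchsian groups. One-endedness, via \Cref{thm:stallings}, guarantees that no edge group is finite.

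Following \cite{DahmaniGuirardel_Isoprob} (and Guirardel--Levitt's theory of trees of cylinders), I would then pass to the $\mathcal Z$-JSJ deformation space and take its tree of cylinders. Two edges lie in the same cylinder when their stabilisers are commensurable. The tree of cylinders is a canonical tree depending only on the deformation space, so $\Aut{G}$ still acts on it. Its vertices are of two kinds. Cylinder vertices have as stabiliser the maximal virtually $\Z$ subgroup containing the edge groups, and these give type \ref{item:elm_vertex_sgr}. The remaining vertices are original non-cylindrical vertices, of type \ref{item:QH_sgr} or \ref{item:rigid_sgr}. The bipartite structure, with every edge joining $V_1$ to $V_2\cup V_3$, is automatic from the construction. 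Edge stabilisers are intersections of a maximal elementary group with a non-elementary vertex group; I would check they have infinite centre, i.e. lie in $\mathcal Z$, using the facts that they are virtually $\Z$ and preserve the orientation of their axis.

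The key point, and the main obstacle, is the claim that QH vertices are extensions by finite groups of bounded Fuchsian groups \emph{of conical type}. Working over $\mathcal Z$ rather than all virtually cyclic groups is exactly what kills mirrors. A reflection in $Q_v$ along a line $\ell$ lifts to an element of $G_v$ normalising the $D_\infty$-type subgroup stabilising $\ell$, and such a subgroup has finite centre, so it is not in $\mathcal Z$. One can then further split $G_v$ over $\mathcal Z$-subgroups corresponding to curves bounding neighbourhoods of mirrors, contradicting JSJ maximality. Alternatively, mirror parts become rigid or are absorbed, as in \cite[Section 5]{Guirardel_Levitt} and \cite{DahmaniGuirardel_Isoprob}. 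I would import this statement from Dahmani--Guirardel and Guirardel--Levitt, making sure that ``orbifold with boundary and no mirrors'' there matches our conical-type definition. I would also verify that the incident edge groups coincide exactly with the preimages of the boundary subgroups: each boundary subgroup is used by some edge, and no two edges share one.

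Finally, rigid vertices are quasiconvex because they are vertex groups of a splitting over quasiconvex edge groups of a hyperbolic group. They are non-elementary and admit no $\mathcal Z$-splitting relative to their incident edge groups, by the defining property of JSJ trees. Exclusivity of the three types holds because elementary vertices are virtually $\Z$, while QH and rigid vertices are non-elementary and rigid ones are by definition not QH.
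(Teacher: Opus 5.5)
Your outline follows the same chain as the paper's proof: Bowditch's splitting, the Dahmani--Guirardel passage to a $\mathcal Z$-JSJ tree, its tree of cylinders, and \cite[Theorem 6.5]{Guirardel_Levitt} for the non-cylinder vertices. There is, however, a genuine gap at the step you postpone, namely that the edge stabilisers $G_x\cap G_Y$ of the tree of cylinders lie in $\mathcal Z$. Your proposed check assumes what must be shown, since preserving the orientation of the axis is exactly the point at issue. The claim fails as soon as some $r\in G_x$ normalises an incident edge group $G_e$ and reverses its axis. Then $re\neq e$ has the same stabiliser, hence lies in the same cylinder $Y$, so $r\in G_x\cap G_Y$ and this edge group has finite centre.

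This does happen. Take a cocompact reflection group $R$ of $\mathbb H^3$ and $c=s_1s_2$, with $s_1,s_2$ reflections in two ultraparallel faces. Take a closed hyperbolic $3$-manifold group $\Lambda$ with a maximal cyclic subgroup $\langle c'\rangle$, and set $G=R*_{c=c'}\Lambda$. This $G$ is one-ended and hyperbolic, because $\langle c'\rangle$ is malnormal in $\Lambda$. Both factors have boundary $S^2$, so they admit no splittings over finite or two-ended groups and are elliptic in every $\mathcal Z$-tree. The maximal virtually cyclic subgroup $E(c)$ lies in $R$ (by malnormality on the $\Lambda$ side) and contains $s_1$, which inverts $c$; so $E(c)\notin\mathcal Z$. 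In any splitting with the listed properties, $R$ and $\Lambda$ fix distinct vertices $p\neq p'$, since otherwise $G$ itself is a vertex group of none of the three types. Then $c$ fixes $[p,p']$, so the neighbour $q\in V_1$ of $p$ on this path has $G_q=E(c)\le R$. It follows that $E(c)$ fixes both $p$ and $q$, so $G_{pq}=E(c)\notin\mathcal Z$. Hence this step cannot be closed for the statement as written. The paper's own proof handles it only by quoting \cite[Lemma 2.32]{DahmaniGuirardel_Isoprob}. An extra hypothesis is needed (torsion-freeness suffices, since then every virtually $\Z$ subgroup is cyclic), or a conclusion that tolerates such edges.

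Independently of this, your outline is thinner than the paper's in three places.
\begin{enumerate}
\item Bowditch's theorem excludes finite extensions of cocompact Fuchsian groups. The paper starts from the trivial splitting there, and invokes \cite{Swarup} to cover all other one-ended groups.
\item Passing from Bowditch's tree to a $\mathcal Z$-JSJ tree takes two operations. First, split each hanging Fuchsian vertex along $\partial N$, where $N$ is a regular neighbourhood of the mirrors \emph{and} of the boundary segments. Second, collapse every edge whose stabiliser has finite centre. It is \cite[Proposition 4.5]{DahmaniGuirardel_Isoprob} that certifies the result is a $\mathcal Z$-JSJ tree, and your appeal to ``JSJ maximality'' presupposes this bridge.
\item Your explanation of why mirrors disappear is wrong for circular mirrors. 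A lift of such a mirror has stabiliser $\Z/2\times\Z$, which has infinite centre and lies in $\mathcal Z$. What removes mirrors is the splitting along $\partial N$, not the centre of mirror stabilisers.
\end{enumerate}
Your justification that rigid vertices are quasiconvex is fine and matches the paper's appeal to \cite{Bowditch_JSJ}.
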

\noindent We shall call the splitting from \Cref{thm:JSJ} the \emph{$\mathcal Z$-JSJ tree of cylinders} for $G$. We say that a quadratically hanging subgroup $G_v$ is \emph{orientation-preserving} if the quotient Fuchsian group $Q_v$ is orientation-preserving.

\begin{rem}
The terminology ``quadratically hanging subgroups" first appeared in work of Rips and Sela in the torsion-free case \cite{Rips_Sela,Sela}. Bowditch \cite{Bowditch_JSJ} calls these subgroups ``maximal hanging Fuchsian" (again because, with his notation, Fuchsian group can act non-faithfully on $\Hyp$). We refer to \cite[page 40]{Guirardel_Levitt} for various other names these groups have assumed.
\end{rem}

\begin{rem}[Quick prerequisites]
    For the proof of the theorem above we shall assume the reader has some familiarity with the theory of JSJ decompositions, trees of cylinders, and deformation spaces of $G$-trees, so we give a vague intuition here and refer to \cite{GL_defspace, GL_tree_cylinders, Guirardel_Levitt} for all details. A splitting $\mathcal G$ over $\mathcal  Z$ is a \emph{JSJ decomposition} if it is \emph{universally elliptic} (meaning that its edge groups are elliptic in any splitting of $G$ over $\mathcal Z$) and \emph{dominates} every other universally elliptic splitting $\mathcal G'$ over $\mathcal Z$ (meaning that its vertex stabilisers are elliptic in $\mathcal G'$). Such a decomposition exists by \cite[Theorem 2.16]{Guirardel_Levitt} (to apply the theorem, one has to consider splittings over all \emph{subgroups} of groups in $\mathcal Z$, which are either in $\mathcal Z$ themselves or finite, and then notice that $G$ is one-ended and therefore does not split over finite subgroups by \Cref{thm:stallings}). $\mathcal Z$-JSJ splittings are, in general, not unique: they are arranged in a \emph{JSJ deformation space}, an $\Aut{G}$-invariant simplicial complex akin to Outer space for $\Out{\mathbb{F}_n}$. One can nonetheless produce a canonical splitting by taking any $\mathcal Z$-JSJ decomposition and then passing to the \emph{JSJ tree of cylinders}. Roughly, one defines a \emph{cylinder} as a maximal sub-tree of the Bass-Serre tree consisting of edges with pairwise commensurable stabilisers; then the tree of cylinders is roughly obtained from the Bass-Serre tree by collapsing cylinders to points, and then connecting two cylinders if they overlap over some vertex. In general, the tree of cylinders associated to a deformation space might not belong to it, as its edge stabilisers might not fall in the original class; however, results from \cite{DahmaniGuirardel_Isoprob} will imply that, in our setting, the tree of cylinders is again a $\mathcal Z$-JSJ tree, and satisfies the properties we listed in \Cref{thm:JSJ}. 
\end{rem}

\begin{proof}[Proof of \Cref{thm:JSJ}] 
We first recall that a decomposition as in the Theorem exists, if one allows vertices in $V_2$ to be finite extensions of bounded Fuchsian groups, not necessarily of conical type. Indeed, if $G$ is a finite extension of a cocompact Fuchsian group, it is enough to consider the trivial splitting, whose unique vertex group (of type \eqref{item:QH_sgr}) is $G$ and without edge groups. For all other one-ended hyperbolic groups, the existence of a canonical splitting with the required vertex and edge groups is a result of Bowditch \cite[Theorem 0.1]{Bowditch_JSJ}, who built on work of Rips and Sela \cite{Rips_Sela,Sela}; in view of a result of Swarup \cite[Theorem 1]{Swarup}, the construction applies to all one-ended hyperbolic groups (besides finite extension of cocompact Fuchsian groups, which we already covered).

We now argue as in \cite[Section 4.2]{DahmaniGuirardel_Isoprob} to produce a new decomposition, this time over $\mathcal Z$ subgroups, whose quadratically hanging subgroups are finite extensions of fundamental groups of orbifolds of conical type. Let $\mathcal G_0$ be the decomposition we constructed above, and perform the following two operations:
\begin{itemize}
    \item For every vertex $v$ of $\mathcal G_0$ of type \ref{item:QH_sgr}, let $O$ be the associated orbifold with boundary, and let $N$ be a regular neighbourhood of the union of all boundary segments and all mirrors of $O$ which avoids cone points. Then one splits the vertex group $G_v$ over the preimages of all subgroups corresponding to $\partial N$, which are now curves in the interior of $O$. 
    \item Next, one collapses all edges of the new decomposition whose stabiliser has finite centre.
\end{itemize}
The resulting decomposition $\mathcal G_1$ is now a $\mathcal Z$-JSJ decomposition by \cite[Proposition 4.5]{DahmaniGuirardel_Isoprob}; furthermore, vertex groups coming from the original quadratically hanging subgroups are now finite extensions of bounded Fuchsian groups of conical type, and the incident edge groups correspond to the preimages of boundary subgroups.

$\mathcal G_1$ might not be canonical any more, but this can be fixed by passing to the tree of cylinders. Let $T_1$ be the tree corresponding to the splitting $\mathcal{G}_1$, let $T$ be its tree of cylinders, and denote by $\mathcal{G}$ be the splitting associated to the latter. We claim that $\mathcal G$ satisfies the requirements of \Cref{thm:JSJ}. Firstly, $\mathcal G$ is again a $\mathcal Z$-JSJ splitting by \cite[Lemma 2.32]{DahmaniGuirardel_Isoprob}, and it is canonical by \cite[Proposition 2.33]{DahmaniGuirardel_Isoprob}, which is an instance of \cite[Corollary 7.4]{Guirardel_Levitt}. By construction of $T$, vertices of $\mathcal G$ come in two types:
\begin{itemize}
    \item Those corresponding to cylinders of $T_1$, whose stabilisers correspond to commensurators of edge groups of $\mathcal G_1$. In particular, such a vertex group is the maximal virtually cyclic subgroup of $G$ containing some edge group of $\mathcal G_1$.
    \item Those corresponding to vertices of $T_1$ with non-elementary stabiliser, which belong to more than two cylinders. Notice that the stabiliser of such a vertex is the same in $\mathcal G_1$ and $\mathcal G$, and it is quasiconvex by \cite[Theorem 1.2]{Bowditch_JSJ}.
\end{itemize}
By \cite[Theorem 6.5]{Guirardel_Levitt}, vertices of the second type can be further subdivided into two sub-types:
\begin{itemize}
    \item Quadratically hanging subgroups, whose underlying orbifold is of conical type and whose incident edge groups are the preimages of boundary subgroups; 
    \item Rigid subgroups, which admit no splitting relative to the adjacent edge groups. 
\end{itemize}
This yields the required partition of the vertices of $\mathcal G$ into $V_1$, $V_2$ and $V_3$. Finally, by construction of the tree of cylinders, an edge $e$ of $\mathcal G$ always connects a cylinder $Y$ with a vertex with non-elementary stabiliser, proving the ``furthermore" part of the statement.
\end{proof}

\noindent Before moving on, we point out that edge groups are maximal inside non-elementary vertex groups:
\begin{lem}\label{jsj:centralisers} If $v\in V_2\cap V_3$ and $e\in E_v$, $G_e$ is maximal among virtually cyclic subgroups of $G_v$. In particular, the centraliser $Z_{G_v}(G_e)$ coincides with the centre $Z(G_e)$.
\end{lem}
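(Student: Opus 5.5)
The statement says $v\in V_2\cap V_3$, but these sets are disjoint, so I read it as $v\in V_2\cup V_3$. The plan rests on the bipartite structure from \Cref{thm:JSJ}. Every edge $e\in E_v$ joins $v$ to a vertex $w\in V_1$, and $G_w$ is a maximal virtually $\Z$ subgroup of $G$. By the construction of the tree of cylinders, $G_e = G_v\cap G_w$ in the Bass--Serre tree $T$, since the edge stabiliser is the intersection of the two endpoint stabilisers.

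\textbf{Maximality.} Let $H\le G_v$ be virtually cyclic with $G_e\le H$. Since $G_e$ is infinite and $G$ is hyperbolic, $H$ lies in the unique maximal virtually cyclic subgroup $E(G_e)$ containing $G_e$. By maximality of $G_w$, and because $G_w\supseteq G_e$ is virtually $\Z$, we get $E(G_e)=G_w$. Hence $H\le G_v\cap G_w=G_e$.

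One must check that $H$ really lies in the stabiliser of the specific edge $e$, not merely in $G_v\cap G_w$ for some other adjacent cylinder. Since $H\le G_v$ and $H\le G_w$, it fixes both $v$ and $w$. As $T$ is a tree, $H$ fixes the geodesic between them, which is the edge $e$. So $H\le G_e$.

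\textbf{Centraliser.} Let $g\in Z_{G_v}(G_e)$ and choose an infinite-order $z\in Z(G_e)$, which exists because $G_e\in\mathcal Z$. Then $g$ commutes with $z$, so $\langle g, z\rangle$ is virtually cyclic: in a hyperbolic group, the centraliser of an infinite-order element is virtually cyclic. It follows that $g\in E(z)=E(G_e)=G_w$, and $g\in G_v$, so $g\in G_e$ by the fixed-edge argument above. Thus $g$ is an element of $G_e$ commuting with all of $G_e$, that is, $g\in Z(G_e)$. The reverse inclusion $Z(G_e)\subseteq Z_{G_v}(G_e)$ is trivial.

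\textbf{Main obstacle.} The delicate point is justifying $E(G_e)=G_w$. This uses that the $V_1$ vertex group adjacent to $e$ is exactly the maximal virtually cyclic subgroup containing $G_e$, which the proof of \Cref{thm:JSJ} provides through the description of cylinder stabilisers as commensurators. I would invoke that description directly, together with the standard fact that commensurators of infinite virtually cyclic subgroups in hyperbolic groups are the maximal elementary subgroups $E(\cdot)$.
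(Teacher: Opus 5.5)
Your proof is correct and is essentially the paper's argument. Both identify the adjacent elementary vertex group with the unique maximal virtually cyclic subgroup containing $G_e$, deduce $H\le G_v\cap G_w=G_e$ because $H$ fixes both endpoints of $e$, and obtain the centraliser statement from the virtual cyclicity of centralisers of infinite-order elements; your passage through $E(z)$ simply makes explicit a step the paper leaves implicit. You are also right that the hypothesis should read $v\in V_2\cup V_3$.
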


\begin{proof}
The lemma is implicit in work of Levitt (see e.g. \cite[page 13]{Levitt_authyp}), but we prove it for completeness. Suppose that $e=(v,w)$ is an edge, where $w\in V_1$ by the ``furthermore" part of \Cref{thm:JSJ}. Recall that, with the notation from \Cref{subsec:BS_theory}, we are implicitly identifying $G_e$ with the subgroup $\tau_{\overline e}(G_e)=x_e\tau_{e}(G_e)x_e^{-1}\le G_v$. By \Cref{thm:JSJ}, $x_eG_wx_e^{-1}$ is a maximal virtually cyclic subgroup of $G$ containing the virtually $\Z$ subgroup $\tau_{\overline e}(G_e)$, and is unique with these properties by e.g. \cite[Lemma 6.5]{DGO}. Hence, if $\tau_{\overline e}(G_e)\leq H$ for some virtually cyclic subgroup $H\le G_v$, then $H\le x_eG_wx_e^{-1}$, and therefore \[H=H\cap x_eG_wx_e^{-1}\le \text{Stab}(G_v)\cap \text{Stab}({x_eG_w})=\text{Stab}({G_e})=\tau_{\overline e}(G_e).\] 
The second statement of \Cref{jsj:centralisers} follows from the first: since centralisers of infinite order elements in hyperbolic groups are virtually cyclic (by e.g. \cite[Corollary III.$\Gamma$.3.10]{BridsonHaefliger}), $Z_{G_v}(G_e)$ is contained inside $G_e$ by maximality, and therefore coincides with $Z(G_e)$. 
\end{proof}

\subsubsection{Relative automorphism groups of vertex groups}

\noindent We now describe $\Out{G_v}$ when $v\in V_2$. For simplicity, we drop the indices and consider any extension
\[1\to K\to E\xrightarrow{\pi} Q\to 1,\]
where $K$ is finite and $Q$ is a bounded Fuchsian group. Notice that, if $K\le P$ for some finite normal subgroup $P$, then the image of $P$ in $Q$ is a finite normal subgroup, and is therefore trivial by \Cref{lem:nofinite_normal_sgr_orbifold}. Hence $K$ is the maximal finite normal subgroup of $E$, and is therefore \emph{characteristic}, meaning that it is preserved by all automorphisms of $E$. In turn, this also means that every automorphism $\phi\in \Aut{E}$ induces an automorphism $\overline\phi\in \Aut{Q}$, defined as $\overline\phi(\pi(g))=\pi(\phi(g))$. We therefore get a map
\[\rho\colon \Aut{E}\to \Aut{Q}.\]
\begin{lem}\label{lem:rho_map}
    $\rho$ has finite kernel and finite-index image.
\end{lem}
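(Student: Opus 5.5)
We need to show two things: the kernel of $\rho\colon \Aut{E}\to\Aut{Q}$ is finite, and its image has finite index in $\Aut{Q}$. Throughout we use that $E$ is finitely generated, since it is a finite extension of the finitely generated group $Q$.

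\emph{Finite kernel.} Let $\phi\in\ker\rho$. Then $\phi(g)\in gK$ for every $g\in E$, so $\phi(g)=g\,k_\phi(g)$ with $k_\phi(g)\in K$. Fix a finite generating set $g_1,\dots,g_m$ of $E$. The automorphism $\phi$ is determined by the tuple $(k_\phi(g_1),\dots,k_\phi(g_m))\in K^m$. Hence $|\ker\rho|\le |K|^m$, which is finite.

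\emph{Finite-index image: setup.} Every automorphism of $E$ sends $K$ to $K$ and induces $\rho(\phi)$. The converse problem is to lift a given $\psi\in\Aut{Q}$. To handle this, consider the pullback
\[
E_\psi=\{(e,e')\in E\times E : \psi(\pi(e))=\pi(e')\}.
\]
Then $\psi$ lifts to an automorphism of $E$ exactly when the extension class of $E$ is $\psi$-invariant. The extension $1\to K\to E\to Q\to 1$ is determined by two pieces of data:
\begin{itemize}
    \item the outer action $\theta\colon Q\to\Out{K}$;
    \item a class in $H^2(Q;Z(K))$, which lies in a torsor for this group.
\end{itemize}
The group $\Aut{Q}\times\Aut{K}$ acts on such extension data. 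So the set of isomorphism classes of extensions of $Q$ by $K$ that are compatible (up to this action) with a given outer action is an $\Aut{Q}$-set. The stabiliser of $[E]$ is contained in $\rho(\Aut{E})$, up to the finite ambiguity coming from $\Aut{K}$.

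\emph{Finite-index image: finiteness of the relevant orbit.} It suffices to show that the $\Aut{Q}$-orbit of the extension data is finite. We check the two pieces separately.
\begin{itemize}
    \item Homomorphisms $Q\to\Out{K}$ form a finite set, since $Q$ is finitely generated and $\Out{K}$ is finite. Hence the subgroup $A_1\le\Aut{Q}$ fixing $\theta$ has finite index.
    \item For the cohomology class, $H^2(Q;Z(K))$ is finite. Indeed, $Q$ is virtually torsion-free and acts properly and cocompactly on the convex set $D_G$, so it has a finite-index subgroup of type $F$. More directly, $Q$ is of type $FP_\infty$, being hyperbolic. Therefore $H^2(Q;M)$ is a finitely generated abelian group killed by $|M|$ for the finite module $M=Z(K)$, so it is finite.
\end{itemize}
So $A_1$ acts on a finite set of extension classes. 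It then also acts, modulo $\Aut{K}$, on the finite set of classes realising the relevant non-abelian $H^2$ obstructions. Hence the stabiliser of $[E]$ has finite index in $A_1$, and therefore in $\Aut{Q}$.

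\emph{Main obstacle.} The delicate step is making the non-abelian extension theory precise. When $K$ is non-abelian, extensions with a fixed outer action form a torsor under $H^2(Q;Z(K))$, not a group, and $\Aut{Q}$ acts on this torsor only through the action of $A_1$ on $\theta$. I would sidestep the bookkeeping by a direct cocycle argument instead:
\begin{enumerate}
    \item Choose a finite presentation $\langle s_1,\dots,s_m\mid r_1,\dots,r_\ell\rangle$ of $Q$.
    \item An extension of $Q$ by $K$ with given $\theta$ is determined, up to finitely many choices, by the values in $K$ taken by the relators evaluated on chosen lifts. There are only finitely many such data.
    \item $\Aut{Q}$ permutes the resulting finitely many isomorphism classes of groups-over-$Q$, namely pairs $(E',\pi')$ up to isomorphism commuting with the projections and preserving $K$.
    \item The stabiliser of the class of $(E,\pi)$ consists exactly of those $\psi$ admitting some $\phi$ with $\pi\circ\phi=\psi\circ\pi$, that is, with $\rho(\phi)=\psi$.
\end{enumerate}
This stabiliser has finite index, being the stabiliser of a point in an action on a finite set, so the image of $\rho$ has finite index in $\Aut{Q}$.
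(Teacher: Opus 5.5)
Your proposal is correct and is essentially the paper's proof. The kernel bound, where an element of $\ker\rho$ is determined by the values $k_\phi(g_i)\in K$ on a finite generating set, is exactly the paper's 1-cocycle count. Your steps 1--4 reproduce the paper's orbit--stabiliser argument on the finite set of extension classes of $Q$ by $K$, with your finite-presentation sketch standing in for the citation of \cite[Lemma 2.3]{DahmaniGuirardel_Isoprob}. The detour through $H^2(Q;Z(K))$, $FP_\infty$ and the pullback $E_\psi$ is not needed and can be cut. One small gain of your version: your equivalence of extensions allows arbitrary isomorphisms on $K$, so the stabiliser of $[(E,\pi)]$ is exactly $\rho(\Aut{E})$. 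The paper's equivalence (identity on $K$) only gives containment of the stabiliser in the image, which already suffices.
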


\begin{proof}
    If $\phi\in \ker(\rho)$, then for every $g\in E$ we have $\phi(g)=g k^\phi(g)$ for some $k^\phi(g)\in K$. Notice that, for $g,h\in E$, 
    \[gh k^\phi(gh)=\phi(gh)=\phi(g)\phi(h)=g k^\phi(g)h k^\phi(h);\]
    hence \begin{equation}\label{eq:1-cocycle}
        k^\phi(gh)=h^{-1}k^\phi(g)hk^\phi(h).
    \end{equation}
    In other words, the map $k^\phi\colon E\to K$ is a \emph{1-cocycle} with respect to the $E$-action on $K$ by conjugation. Notice that a $1$-cocycle is uniquely determined by the values it takes on a generating set for $E$, in view of \Cref{eq:1-cocycle}; hence, since $E$ is finitely generated and $K$ is finite, there are finitely many 1-cocycles, so $\ker(\rho)$ is finite. 

    We now prove that the image of $\rho$ has finite index, following \cite[Section 2.2]{DahmaniGuirardel_Isoprob}. We say that two extensions $[E_1],[E_2]$ of $Q$ with kernel $K$ are equivalent if there exists a commutative diagram

    \[\begin{tikzcd}
        1\ar{r}&K\ar{r}\ar[d,"\text{id}"]&E_1\ar{r}\ar[d,"\sim"]&Q\ar{r}\ar[d,"\text{id}"]& 1\\
        1\ar{r}&K\ar{r}&E_2\ar{r}&Q\ar{r}& 1
    \end{tikzcd}\]
    Let $\mathcal E(Q,K)$ be the set of equivalence classes of extensions of $Q$ with kernel $K$. Since $Q$ is hyperbolic, it is finitely presented, and therefore $\mathcal E(Q,K)$ is finite by e.g. \cite[Lemma 2.3]{DahmaniGuirardel_Isoprob}. Note that $\Aut{Q}$ acts on $\mathcal{E}(Q,K)$ as follows: if $\alpha\in \Aut{Q}$ and $[E]\colon 1\to K\to E\xrightarrow{\pi} Q\to 1$ is an extension, then $\alpha\cdot [E]$ is the extension $ 1\to K\to E\xrightarrow{\alpha\circ \pi} Q\to 1$. The new extension is equivalent to the original one if and only if $\alpha$ lifts to an automorphism of $E$; therefore, since $\mathcal{E}(Q,K)$ is finite, a finite-index subgroup of $\Aut{Q}$ lifts to $\Aut{E}$, thus proving that $\rho$ has finite-index image.
\end{proof}

\noindent Let $\Aut{Q}^{\ell}$ be the image of $\rho$. By the above lemma we have a finite extension
\[1\to \ker(\rho)\to \Aut{E}\xrightarrow{\rho} \Aut{Q}^{\ell}\to 1.\]
If we take the quotient by the inner automorphisms of $E$, we get that $\Out{E}$ maps with finite kernel onto $\Aut{Q}^{\ell}/\rho(\Inn{E})=\Aut{Q}^{\ell}/\Inn{Q}$, where we used that every inner automorphism of $Q$ lifts to an inner automorphism of $E$. Summarising, we have:
\begin{cor}\label{cor:out_of_QH}
    Let $1\to K\to E\to Q\to 1$ be a finite extension of a bounded Fuchsian group $Q$. The map $\overline{\rho}\colon \Out{E}\to \Out{Q}$, mapping every outer automorphism of $E$ to the induced outer automorphism of $Q$, has finite kernel and finite-index image.
\end{cor}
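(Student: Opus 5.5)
The corollary should follow directly from \Cref{lem:rho_map} once we check how inner automorphisms behave under $\rho$. I would first verify that $\rho\colon \Aut{E}\to\Aut{Q}$ is a homomorphism. This is immediate from the formula $\overline\phi(\pi(g))=\pi(\phi(g))$, which is well defined because $K$ is characteristic.

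Next I would check that $\rho(\Inn{E})=\Inn{Q}$. If $c_g$ denotes conjugation by $g\in E$, then $\rho(c_g)=c_{\pi(g)}$. Since $\pi$ is surjective, every inner automorphism of $Q$ arises this way. Two consequences follow:
\begin{itemize}
    \item $\Inn{Q}\le \Aut{Q}^{\ell}$;
    \item $\rho$ descends to a well-defined homomorphism $\overline\rho\colon \Out{E}=\Aut{E}/\Inn{E}\to \Aut{Q}^{\ell}/\Inn{Q}\le \Out{Q}$.
\end{itemize}

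For the image, $\overline\rho(\Out{E})=\Aut{Q}^{\ell}/\Inn{Q}$. Since $\Aut{Q}^{\ell}$ has finite index in $\Aut{Q}$ by \Cref{lem:rho_map}, and $\Inn{Q}\le\Aut{Q}^{\ell}$, the quotient $\Aut{Q}^{\ell}/\Inn{Q}$ has the same finite index in $\Aut{Q}/\Inn{Q}=\Out{Q}$.

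For the kernel, the step needing care is that $\ker\overline\rho$ is the image in $\Out{E}$ of $\rho^{-1}(\Inn{Q})$. Let $\phi\in\rho^{-1}(\Inn{Q})$, say $\rho(\phi)=c_{\pi(g)}$. Then $\rho(c_g^{-1}\phi)$ is the identity, so $c_g^{-1}\phi\in\ker\rho$. Hence $\rho^{-1}(\Inn{Q})=\Inn{E}\cdot\ker\rho$. It follows that $\ker\overline\rho$ is the image of the finite group $\ker\rho$ in $\Out{E}$, so it is finite, again by \Cref{lem:rho_map}.

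None of these steps is a genuine obstacle. The only points to handle carefully are the identity $\rho(\Inn{E})=\Inn{Q}$, which uses surjectivity of $\pi$, and the resulting description of $\ker\overline\rho$.
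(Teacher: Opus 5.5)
Your proposal is correct and follows the paper's argument. The paper also derives the corollary from \Cref{lem:rho_map} by quotienting by $\Inn{E}$ and using $\rho(\Inn{E})=\Inn{Q}$ (every inner automorphism of $Q$ lifts to an inner one of $E$). Your computation $\rho^{-1}(\Inn{Q})=\Inn{E}\cdot\ker\rho$ just spells out the kernel step that the paper leaves implicit.
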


\noindent Though we won't need this later, for completeness we include a (partial) description of $\Out{G_v}$ for vertices of type \eqref{item:elm_vertex_sgr} and \eqref{item:rigid_sgr}. Firstly, the next lemma, which follows from results in the literature, implies that vertex groups of type \eqref{item:elm_vertex_sgr} have finite outer automorphism groups:
\begin{lem}\label{lem:finite_out} A virtually cyclic group has finite outer automorphism group.
\end{lem}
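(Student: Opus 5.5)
The plan is to reduce to the structure of a virtually cyclic group as an extension of $\Z$ or $D_\infty$ by a finite group. Let $H$ be virtually cyclic. If $H$ is finite, then $\Aut{H}$ is finite and there is nothing to prove. Otherwise $H$ is infinite and has a maximal finite normal subgroup $F$. This is standard: for instance, $F$ is the kernel of the action on the two ends, or equivalently the union of all finite normal subgroups, which is finite because finite normal subgroups of $H$ inject into the finite quotient of $H$ by a normal infinite cyclic subgroup. Then $H/F$ is isomorphic to $\Z$ or to $D_\infty\cong \Z/2*\Z/2$. Since $F$ is characteristic, we get an induced map
\[\rho\colon \Aut{H}\to \Aut{H/F},\]
exactly as in the discussion preceding \Cref{lem:rho_map}.

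First, $\ker\rho$ is finite. The argument is verbatim the first half of the proof of \Cref{lem:rho_map}. An element $\phi\in\ker\rho$ is determined by a $1$-cocycle $k^\phi\colon H\to F$. Such a cocycle is determined by its values on a finite generating set of $H$, and $F$ is finite, so there are finitely many of them.

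Second, $\Out{H/F}$ is finite. We have $\Aut{\Z}=\Z/2$. For $D_\infty=\langle a,b\mid a^2,b^2\rangle$, every automorphism must send the index-two infinite cyclic subgroup $\langle ab\rangle$ (the unique such subgroup) to itself, and must send the involutions $a,b$ to involutions. The involutions are the elements $(ab)^na$, and these form two conjugacy classes. So modulo inner automorphisms there are at most finitely many possibilities; in fact $\Out{D_\infty}\cong\Z/2$, generated by swapping $a$ and $b$.

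To conclude, $\Aut{H}$ surjects with finite kernel onto a subgroup $A\le\Aut{H/F}$. The image of $\Inn{H}$ under $\rho$ is $\Inn{H/F}$, so $\Out{H}$ maps with finite kernel onto $A/\Inn{H/F}$. This quotient is a subgroup of the finite group $\Out{H/F}$, and hence $\Out{H}$ is finite.

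The only step needing real care is the existence and finiteness of $F$ together with the dichotomy $H/F\in\{\Z,D_\infty\}$. For this I would cite the standard structure theorem for virtually cyclic groups (for example the discussion of elementary subgroups in \cite{DGO}) rather than reprove it.
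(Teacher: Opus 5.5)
Your proof is correct, but it takes a different route from the paper's. The paper also starts from the dichotomy that an infinite virtually cyclic group $G$ surjects with finite kernel onto $\Z$ or $D_\infty$ (citing Macpherson). It then handles the two cases with separate external results. In the $\Z$ case it quotes Alperin's theorem that $\Aut{G}$ itself is finite. In the $D_\infty$ case it quotes Pettet's theorem that $\Aut{G}$ is virtually cyclic, and then notes that $Z(G)$ is finite. Hence $\Inn{G}\cong G/Z(G)$ is an infinite subgroup of a virtually cyclic group, so it has finite index.

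You instead reuse the paper's own machinery from \Cref{lem:rho_map} and \Cref{cor:out_of_QH}. The $1$-cocycle count makes $\ker\rho$ finite, and passing to outer automorphisms reduces everything to the finiteness of $\Out{\Z}$ and $\Out{D_\infty}$, which you compute by hand. You do not even need the finite-index-image half of \Cref{lem:rho_map}, since $\Out{H/F}$ is already finite. Your version is more self-contained and treats both cases uniformly, relying only on the structure theorem. The paper's version gives the stronger information about $\Aut{G}$ itself (finite in the $\Z$ case), which the statement does not need.

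One small slip: $F$ is not ``the kernel of the action on the two ends''. That kernel has index at most two and contains an infinite cyclic subgroup; for $H=\Z$ it is all of $H$. You presumably meant the kernel of a proper cocompact action on a line. Your other description of $F$, as the union of all finite normal subgroups, is correct. Its finiteness follows from your bound via injecting into $H/C$, together with the observation that a product of two finite normal subgroups is again finite and normal. So the argument is unaffected.
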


\begin{proof}
    Such a group $G$ surjects onto either $\Z$ or the infinite dihedral group $D_{\infty}=\Z/2 *\Z/2$ by \cite[Lemma 3.2]{Macpherson}. In the former situation, $\Aut{G}$ is finite by \cite[Theorem 1]{alperin}. In the latter, $\Aut{G}$ is virtually cyclic by \cite[Theorem 3.4]{pettet}; furthermore $G$ has finite centre, since it is a finite extension of the centreless group $D_{\infty}$, so the quotient $\Out{G}=\Aut{G}/(G/Z(G))$ is finite. 
\end{proof}

\noindent We finally consider vertex groups of type \eqref{item:rigid_sgr}, which are quasiconvex in $G$ by \Cref{thm:JSJ} and therefore hyperbolic. Paulin's theorem states that a one-ended hyperbolic group has infinite outer automorphism group if and only if it splits over virtually $\Z$ groups \cite{Paulin}, see also \cite[Theorem 1.4]{Levitt_authyp} for a more precise statement. This does not directly apply to vertices of type \eqref{item:rigid_sgr}, as they admit no splitting \emph{relative to the edge groups} but might split over virtually cyclic subgroups in a different way. However, combining Rips machinery \cite{Bestvina-Feighn} with a relative version of Paulin's theorem (see \cite[page 13]{Levitt_authyp}) yields the following:
\begin{lem}
    If $v\in V_3$, the subgroup of $\Out{G_v}$ which preserves the conjugacy classes of edge groups is finite.
\end{lem}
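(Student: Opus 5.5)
We argue by contradiction: if the subgroup were infinite, Paulin's argument would yield an action on an $\R$-tree, Rips theory would turn this into a splitting, and the splitting would contradict rigidity. Let $v\in V_3$ and let $\mathcal E_v=\{G_e\}_{e\in E_v}$ be the family of incident edge groups, viewed as subgroups of $G_v$. Let $\Out{G_v;\mathcal E_v}$ be the subgroup of outer automorphisms preserving each conjugacy class in $\mathcal E_v$; since there are finitely many edge groups, this has finite index in the subgroup of the statement. Suppose $\Out{G_v;\mathcal E_v}$ is infinite.

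\textbf{Step 1 (limiting tree).} Recall that $G_v$ is quasiconvex in $G$ by \Cref{thm:JSJ}, hence hyperbolic. Choose a sequence $\phi_n\in \Aut{G_v}$ of pairwise distinct outer classes. Rescale a Cayley graph of $G_v$ by the displacement $\lambda_n=\min_{x}\max_{s\in S}\dist(x,\phi_n(s)x)\to\infty$, over a finite generating set $S$. Paulin's construction then gives a non-trivial, minimal action of $G_v$ on an $\R$-tree $T$ with the following properties:
\begin{itemize}
    \item arc stabilisers are virtually cyclic, and tripod stabilisers are finite;
    \item the action is stable.
\end{itemize}
Each edge group $G_e$ is preserved up to conjugacy by every $\phi_n$. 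Hence the translation lengths of the elements of $G_e$ stay bounded along the sequence and vanish after dividing by $\lambda_n$. It follows that each $G_e$, being virtually cyclic, is elliptic in $T$.

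\textbf{Step 2 (Rips machine).} Apply the Bestvina--Feighn version of the Rips machine to the stable action, for $G_v$ relative to $\mathcal E_v$. This gives a non-trivial splitting of $G_v$ relative to $\mathcal E_v$ over a subgroup that is either an arc stabiliser or an extension of one by a cyclic group. In the hyperbolic setting these edge groups are virtually cyclic or finite. The splitting may arise from a simplicial part, from a surface (orbifold) component, or from an axial component; an orbifold component also yields splittings over virtually $\Z$ groups, via simple closed curves.

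\textbf{Step 3 (contradiction with rigidity).} We must upgrade this splitting to one over $\mathcal Z$ relative to $\mathcal E_v$, and then to a splitting of $G$. Finite edge groups are ruled out: a splitting of $G_v$ over a finite group relative to the infinite edge groups extends to a splitting of $G$ over a finite group, contradicting one-endedness of $G$ via \Cref{thm:stallings}. For virtually $\Z$ edge groups with finite centre, refine across the $D_\infty$-type quotient as in the construction of $\mathcal G_1$ in the proof of \Cref{thm:JSJ}, or pass to an index-two subgroup, to obtain a $\mathcal Z$-edge. Such a splitting relative to $\mathcal E_v$ contradicts condition (3) of \Cref{thm:JSJ}. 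Alternatively, it can be used to refine $\mathcal G$, contradicting JSJ maximality, and a surface component contradicts $v\notin V_2$.

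The main obstacle is Step 2 together with the edge-group bookkeeping in Step 3. In particular, one must ensure that the Rips output really gives a splitting relative to all the edge groups, with edge groups in $\mathcal Z$ and not merely finite or dihedral. For this I would follow Levitt's relative version of Paulin's theorem \cite{Levitt_authyp}, quoting it as the relative statement.
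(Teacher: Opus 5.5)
Your outline is the argument the paper has in mind. The paper gives no proof beyond citing the Rips machine of Bestvina--Feighn and Levitt's relative version of Paulin's theorem. However, the step you flag in Step 3 has a genuine gap: the case where the Rips machine returns an edge group $C$ that is virtually cyclic with \emph{finite} centre.

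\textbf{Why your remedies fail.} Neither proposed fix works.
\begin{itemize}
\item The refinement in the proof of \Cref{thm:JSJ} is specific to quadratically hanging orbifolds with mirrors. There, the boundary of a regular neighbourhood of the mirrors is a two-sided curve with $\Z$ stabiliser; this is not an operation on abstract splittings.
\item Replacing $C$ by its index-two subgroup with infinite centre does not give a splitting of $G_v$ over that subgroup. Passing to an index-two subgroup of $G_v$ only splits a subgroup.
\end{itemize}
In fact a group can split over a $D_\infty$-type subgroup without splitting over anything in $\mathcal Z$. The reflection group of a right-angled hyperbolic pentagon splits as $\langle s_1,s_2,s_3\rangle*_{\langle s_1,s_3\rangle}\langle s_1,s_3,s_4,s_5\rangle$ with $\langle s_1,s_3\rangle\cong D_\infty$. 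But its orbifold is a disc with mirror boundary and no cone points, so it has no essential two-sided closed curve and no $\mathcal Z$-splitting. Moreover, the collapse step in the proof of \Cref{thm:JSJ} can create vertices of $V_3$ that do split, relative to their incident edge groups, over the finite-centre groups that were collapsed.

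So $\mathcal Z$-rigidity is simply not contradicted by a virtually cyclic splitting relative to $\mathcal E_v$. The same objection applies to your alternatives. JSJ maximality only concerns $\mathcal Z$-splittings. An orbifold component of the limit tree only exhibits a QH-type subgroup of $G_v$ (not $v\in V_2$), and that orbifold may carry only $D_\infty$-type essential arcs.

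\textbf{What is missing, and one way to supply it.} You need an argument that infinitely many automorphisms force a splitting over a virtually cyclic group with \emph{infinite} centre. The relevant fact is that a virtually cyclic group with finite centre has finite centraliser, so twists around such edge groups have finite order. One way to close the gap:
\begin{itemize}
\item Run your Steps 1--2 for the rigid vertices of Bowditch's $\mathcal{VC}$-JSJ. These are elliptic in every splitting over virtually cyclic groups, so any Rips output gives a contradiction. This is the setting in which Levitt's argument is written.
\item Observe that a vertex of $V_3$ is assembled from such vertices, maximal virtually cyclic groups, and neighbourhoods of mirrors and boundary segments, glued along finite-centre edges.
\item Conclude with Levitt's analysis of automorphisms of graphs of groups with rigid edge groups, after checking that a finite-index subgroup of the relative $\Out{G_v}$ preserves this decomposition.
\end{itemize}

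\textbf{Two smaller points.} Bounding translation lengths on $G_e$ uses that $\Out{G_e}$ is finite (\Cref{lem:finite_out}), since $\phi_n$ only preserves the conjugacy class of $G_e$ as a subgroup. Also, the Bestvina--Feighn alternative where $T$ is a line must be ruled out; it would force $G_v$ to be elementary.
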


\section{Out is virtually a HHG\ldots}\label{sec:virtual_HHG}
This section is devoted to the proof of Theorems~\ref{thmintro:virtual_bounded},~\ref{thmintro:virtual_HHG}, and~\ref{thmintro:out_cocompact_is_HHG} from the introduction. We start by recalling a construction of Levitt \cite{Levitt_authyp}, which abstracts the algebraic properties of the capping extension. This can be used to express a finite-index subgroup of $\Out{G}$ as a bounded central extension whose base is a product of (virtual) orbifold mapping class groups, thus proving \Cref{thmintro:virtual_bounded}. In \Cref{subsec:virtual_HHG_of_Out} we then leverage the interaction between hierarchical hyperbolicity and central extensions to prove Theorems~\ref{thmintro:virtual_HHG} and~\ref{thmintro:out_cocompact_is_HHG}: under the assumption that all quadratically hanging subgroups are orientation-preserving, we show that the outer automorphism group of a one-ended hyperbolic group $G$ is virtually a HHG, and genuinely a HHG in the cocompact Fuchsian case.

\subsection{Algebraic mapping class group}\label{subsec:mcg_a_la_levitt}
Let $G$ be a one-ended hyperbolic group, and let $\mathcal G$ be the $\mathcal Z$-JSJ tree of cylinders from \Cref{thm:JSJ}, with underlying graph $\Gamma$. With the notation from \Cref{subsec:BS_theory} and \Cref{subsec:jsj}, denote by $\AutH{G_v}{\Gamma}$ the subgroup of $\Aut{G_v}$ consisting of automorphisms that act as conjugations on each edge group. Let $\AutbH{G_v}{\Gamma}$ be the set of tuples $(\alpha;a_{e_1},\dots,a_{e_k})\in \AutH{G_v}{\Gamma}\times G_v^{k}$ such that $\alpha$ acts as conjugation by $a_e$ on $G_e$. This set can be made into a group by setting
\[(\alpha;a_{e_1},\dots,a_{e_k})\cdot(\beta;b_{e_1},\dots,b_{e_k})\coloneqq (\alpha\beta; \alpha(b_{e_1})a_{e_1},\dots;\alpha(b_{e_k})a_{e_k}).\]
The projection on the first coordinate gives an epimorphism $\AutbH{G_v}{\Gamma}\to \AutH{G_v}{\Gamma}$. Furthermore, if an element of $\AutbH{G_v}{\Gamma}$ maps to the identity in $\AutH{G}{\Gamma}$, then it is of the form $(\text{id};a_{e_1},\dots,a_{e_k}) $, where each $a_e$ belongs to the centraliser $Z_{G_v}(G_e)$. Hence we have the following short exact sequence:
\[1\to \prod_{e\in E_v}Z_{G_v}(G_{e})\to \AutbH{G_v}{\Gamma}\to \AutH{G_v}{\Gamma}\to 1.\]
\begin{lem}[{\cite[Lemma 4.1]{Levitt_authyp}}]\label{lem:when_autLevitt_is_central}
    If $Z_{G_v}(G_e)\le G_e$ for all $e\in E_v$, then the extension is central.
\end{lem}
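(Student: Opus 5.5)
Centrality means that the kernel $\prod_{e\in E_v}Z_{G_v}(G_e)$ commutes with every element of $\AutbH{G_v}{\Gamma}$. I would verify this directly from the multiplication law, by conjugating a kernel element by an arbitrary element.

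Take a kernel element $z=(\mathrm{id};z_{e_1},\dots,z_{e_k})$ with $z_e\in Z_{G_v}(G_e)$, and an arbitrary element $g=(\alpha;a_{e_1},\dots,a_{e_k})$. The definition gives
\[g\cdot z=(\alpha;\alpha(z_{e_1})a_{e_1},\dots,\alpha(z_{e_k})a_{e_k}),\qquad z\cdot g=(\alpha; a_{e_1}z_{e_1},\dots,a_{e_k}z_{e_k}),\]
where in the second product we used that $\mathrm{id}(a_e)=a_e$. So it is enough to show that $\alpha(z_e)a_e=a_ez_e$ for every $e\in E_v$, i.e.\ that $\alpha(z_e)=a_ez_ea_e^{-1}$.

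The hypothesis $Z_{G_v}(G_e)\le G_e$ gives $z_e\in G_e$. Since $\alpha$ acts on $G_e$ as conjugation by $a_e$, we get $\alpha(z_e)=a_ez_ea_e^{-1}$, which is exactly what we need. One should double-check the convention for "acts as conjugation by $a_e$", namely $\alpha(x)=a_exa_e^{-1}$ rather than $a_e^{-1}xa_e$. The multiplication rule is only associative and compatible with the first factor under one of the two conventions, and the check above uses the same convention consistently, so it holds either way once the convention is fixed to match the group law.

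There is no real obstacle here. The only delicate point is this bookkeeping of the conjugation convention, which has to be consistent with how the group law on $\AutbH{G_v}{\Gamma}$ was defined. By \Cref{jsj:centralisers}, the hypothesis of the lemma is automatically satisfied for $v\in V_2\cup V_3$.
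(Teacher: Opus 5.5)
Your proposal is correct and follows the paper's proof: both compute the two products $(\alpha;a_e)\cdot(\mathrm{id};z_e)$ and $(\mathrm{id};z_e)\cdot(\alpha;a_e)$, and use $z_e\in Z_{G_v}(G_e)\le G_e$ to get $\alpha(z_e)=a_ez_ea_e^{-1}$. The only differences are that you handle all incident edges at once instead of reducing to a single edge, and that you spell out the conjugation convention; the stated group law does force $\alpha(x)=a_exa_e^{-1}$ on $G_e$.
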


\begin{proof}
    For simplicity suppose there is a single incident edge. Let $(\text{id}; g)$ be in the image of $Z_{G_v}(G_e)$ and let $(\alpha;a)\in \AutbH{G_v}{\Gamma}$. Notice that, since $g\in Z_{G_v}(G_e)\le G_e$, $\alpha(g)=aga^{-1}$. Hence
    \[(\alpha;a)(\text{id};g)=(\alpha;\alpha(g)a)=(\alpha;aga^{-1}a)=(\alpha;ag)=(\text{id};g)(\alpha;a),\]
    so the extension is central.
\end{proof}

\noindent There is a homomorphism $G_v\to \AutH{G_v}{\Gamma}$ mapping $g$ to $(c_g;g,\ldots,g)$, where $c_g$ is the conjugation by $g$. Let $\PMCG{G_v}$ and $\MCGb{G_v}$ be the quotients of $\AutH{G_v}{\Gamma}$ and $\AutbH{G_v}{\Gamma}$ by the image of $G_v$, respectively. One then gets the exact sequence
\begin{equation}\label{MCG_sequence}
    1\to \frac{\prod_{e\in E_v} Z_{G_v}(G_{e})}{Z(G_v)}\to \MCGb{G_v}\to \PMCG{G_v}\to 1,
\end{equation}
which is central if $Z_{G_v}(G_e)\le G_e$ for all $e\in E_v$. For every $e\in E_v$ and every $g\in Z_{G_v}(G_e)$, the image of $g$ in $\MCGb{G_v}$ is called the \emph{algebraic Dehn twist} by $g$ along $e$ near $v$.

\begin{rem}
   In \cite{GL_McCool}, $\PMCG{G_v}$ has appeared under the name of \emph{generalised McCool subgroup}, following McCool's work on outer automorphisms of free groups fixing a given collection of conjugacy classes \cite{McCool}. Thus the main content of the Dehn-Nielsen-Baer \Cref{thm:DNB} is that the pure mapping class group of a punctured surface is a McCool subgroup of its (free) fundamental group, namely the subgroup fixing the conjugacy classes generated by loops surrounding the punctures. We shall expand on this in \Cref{lem:surface_case} below.
\end{rem}

\noindent Levitt \cite[Proposition 4.2]{Levitt_authyp} proved the existence of an ``extension" map 
\begin{equation}\label{eq:lambda_Levitt}
\prod_{v\in V}\MCGb{G_v}\xrightarrow{\lambda} \Out{G},
\end{equation}
which is roughly constructed as follows: given $(\alpha;g_{e_1},\ldots g_{e_k})\in \AutH{G_v}{\Gamma}$, one can extend $\alpha$ to each adjacent vertex group $G_{i({e_i})}$ as the conjugation by $g_{e_i}$, and then propagate this process across the whole Bass-Serre tree. The map $\lambda$ has finite-index image, and the kernel $N$ lies in $\prod_{v\in V} \frac{\prod_{e\in E_v} Z_{G_v}(G_e)}{Z(G_v)}$, so it corresponds to the relations between Dehn twists on different edge groups. The $\lambda$-image of $\prod_{v\in V} \frac{\prod_{e\in E_v} Z_{G_v}(G_e)}{Z(G_v)}$ is called the \emph{group of twists} and is denoted by $\mathcal{T}$.

Using the extension map, Levitt was able to present a finite-index subgroup of $\Out{G}$ as an extension:
\begin{thm}\label{thm:main_levitt}
    There is an exact sequence
    \[1\to \mathcal T\to \Outtwo{G}\to \prod_{v\in V_2}\PMCG{G_v}\to 1,\]
    where $\Outtwo{G}$ has finite index in $\Out{G}$. If $G$ is torsion-free, then $\mathcal T$ is free Abelian of rank $|E| - |V_1|$, and the extension is central.
\end{thm}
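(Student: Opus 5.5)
We will obtain \Cref{thm:main_levitt} by composing Levitt's extension map $\lambda$ from \eqref{eq:lambda_Levitt} with the sequences \eqref{MCG_sequence}, and then checking exactness and centrality. Define $\Outtwo{G}$ to be the subgroup of $\Out{G}$ acting trivially on the quotient graph $\Gamma$ of the canonical splitting $\mathcal G$. This is possible because \Cref{thm:JSJ} makes $\mathcal G$ canonical, so $\Out{G}$ acts on the finite graph $\Gamma$. In addition, require that the induced outer automorphism of each edge group and of each vertex group $G_v$ with $v\in V_1\cup V_3$ is trivial. Since $\Gamma$ is finite, $\Out{G_e}$ and $\Out{G_v}$ for $v\in V_1$ are finite by \Cref{lem:finite_out}, and the relevant relative outer automorphism groups of rigid vertices are finite by the last lemma of \Cref{subsec:jsj}, this is a finite-index subgroup.

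\textbf{Step 1: construct the map to $\prod_{v\in V_2}\PMCG{G_v}$.} An element of $\Outtwo{G}$ has a representative automorphism $\phi$ that preserves a lift of $G_v$ for each $v$. For $v\in V_2$, its restriction lies in $\AutH{G_v}{\Gamma}$: it preserves every incident edge group up to conjugation inside $G_v$, and we may postcompose to make it act as a conjugation on each such edge group. The class of this restriction modulo $G_v$ is independent of the choices, which gives a homomorphism $\Outtwo{G}\to\prod_{v\in V_2}\PMCG{G_v}$.

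\textbf{Step 2: surjectivity and kernel.} Surjectivity comes from $\lambda$: lift an element of $\PMCG{G_v}$ to $\MCGb{G_v}$, extend it via $\lambda$, and note that the extension acts trivially on the other vertices, hence lands in $\Outtwo{G}$ after possibly passing to finite index. For the kernel, an element acting as conjugation on every $G_v$ with $v\in V_2$ also acts as conjugation on every $G_v$ with $v\in V_1\cup V_3$, by the definition of $\Outtwo{G}$. A standard Bass--Serre argument then shows that such an automorphism is the $\lambda$-image of a product of Dehn twists, i.e.\ it lies in $\mathcal T$.

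\textbf{Step 3: the torsion-free case.} If $G$ is torsion-free, then by \Cref{jsj:centralisers} each $Z_{G_v}(G_e)=Z(G_e)\cong\Z$, which lies in $G_e$. Hence \Cref{lem:when_autLevitt_is_central} makes each sequence \eqref{MCG_sequence} central, and the twists commute with everything in $\Outtwo{G}$. To prove centrality in $\Outtwo{G}$ itself, we would verify that conjugating a twist along $e$ by $\phi\in\Outtwo{G}$ gives the twist along $\phi(e)=e$ by $\phi(g)$, and that $\phi(g)=g^{\pm1}$ with the sign being $+$, since $\phi$ fixes the outer class on $G_e$.

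For the rank, twists are indexed by pairs (edge $e$, endpoint), with all edge groups cyclic. The relations come from two sources: twisting along $e$ near either endpoint agrees up to a central element, and for $w\in V_1$ the product of twists near $w$ over all $e\in E_w$ is an inner automorphism, because $Z(G_w)$ is the whole cyclic group. Non-elementary vertices have trivial centre and contribute no relations. Carefully counting via Levitt's description of $\ker\lambda$ gives rank $|E|-|V_1|$.

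\textbf{Main obstacle.} The hard step is the precise computation of $\ker\lambda$ and the resulting rank, because it requires knowing that there are no further hidden relations among twists. We would first try to prove freeness by exhibiting, for each cylinder vertex $w\in V_1$, homomorphisms detecting the twists, for instance via the action on the Bass--Serre tree or on translation lengths of elements crossing the edges.
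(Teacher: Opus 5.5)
The paper does not reprove this statement. It cites Levitt's Theorem 5.1 and only remarks that Levitt's argument uses no property of Bowditch's splitting that the $\mathcal Z$-JSJ tree of cylinders lacks. Your Steps 1--2 and your centrality check are a sound unpacking of that citation:
\begin{itemize}
\item you take the finite-index subgroup acting trivially on $\Gamma$, on the edge groups and on the $V_1\cup V_3$ vertex groups;
\item you restrict to $\prod_{v\in V_2}\PMCG{G_v}$ and get surjectivity via $\lambda$;
\item the kernel is the twists;
\item in the torsion-free case a representative fixing a lift $\tilde e$ is the identity on $G_e\cong\Z$, so twists are central.
\end{itemize}
Two small corrections. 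First, the $\lambda$-image of $\MCGb{G_v}$ already acts trivially on $\Gamma$, on all edge groups and on all other vertex groups, so it lies in your $\Outtwo{G}$ with no passage to finite index. Passing to a finite-index subgroup there would actually break surjectivity onto the full product. Second, the ``standard Bass--Serre argument'' produces twists by elements of $Z_G(G_e)$. For these to lie in $\mathcal T$ you must note that $Z_G(G_e)$ is contained in the adjacent $V_1$ vertex group, which is the maximal virtually cyclic subgroup containing $G_e$.

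The genuine gap is the rank statement in the torsion-free case. Let $c_w$ generate $G_w\cong\Z$ and let $t_{e,w}$ be the twist near $w$ along $e$ by $c_w$. Using the edge relations to move all twists to cylinder vertices, and then the cylinder relations, you only get a surjection $\bigoplus_{w\in V_1}\Z^{E_w}/\Z\cdot(1,\dots,1)\to\mathcal T$. This gives $\operatorname{rank}\mathcal T\le |E|-|V_1|$ and nothing more. Showing there are no further relations is what gives the rank, and also what makes $\mathcal T$ torsion-free. That is exactly the step you defer to ``Levitt's description of $\ker\lambda$'' and then list as unresolved.

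Your suggested detector via translation lengths in the Bass--Serre tree $T$ cannot work. The splitting is canonical, so $\ell_T(\phi(g))=\ell_T(g)$ for every $\phi\in\Aut{G}$, and $\ell_T$ cannot see twists.

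What works is a relative rotation at each cylinder vertex.
\begin{itemize}
\item In the torsion-free case a non-elementary vertex group $G_{\tilde v}$ has trivial centraliser in $G$. So any automorphism acting by conjugation on all vertex groups acts on $G_{\tilde v}$ as conjugation by a unique element $d(\tilde v)$.
\item Let $\tilde v,\tilde v'$ be non-elementary neighbours of a cylinder vertex $\tilde w$ through edges $\tilde e,\tilde e'$. Since $Z_G(G_{\tilde e})=Z_G(G_{\tilde e'})=G_{\tilde w}$, we get $d(\tilde v)^{-1}d(\tilde v')\in G_{\tilde w}$.
\item This element is multiplicative under composition, because $G_{\tilde w}$ is abelian. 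It is unchanged by composing with inner automorphisms and is trivial for inner automorphisms.
\item For $\prod_{e\in E_w}t_{e,w}^{k_e}$ it equals $c_w^{\pm(k_{e'}-k_e)}$, while twists at other cylinder vertices contribute nothing.
\end{itemize}
Hence an inner product of twists has all exponents equal at each $w\in V_1$, i.e.\ it is a combination of cylinder relations. This yields $\mathcal T\cong\bigoplus_{w\in V_1}\Z^{E_w}/\Z\cdot(1,\dots,1)\cong\Z^{|E|-|V_1|}$.
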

\begin{proof}
    This is precisely \cite[Theorem 5.1]{Levitt_authyp}. We point out that, while Levitt works with Bowditch's JSJ decomposition, his arguments from \cite[Section 5]{Levitt_authyp} only require the existence of a canonical decomposition satisfying a subset of the properties from \Cref{thm:JSJ}, and therefore apply to the $\mathcal Z$-JSJ tree of cylinder word by word.
\end{proof}
\noindent Unfortunately, even in the torsion-free case, it is not clear on the nose if the above extension is bounded. Therefore, in the following pages we shall go through Levitt's construction step-by-step, to prove that a (possibly even smaller) finite-index subgroup of $\Out{G}$ is indeed a bounded central extension of a product of bounded Fuchsian groups of conical type. We first specialise algebraic mapping class groups to vertices of type \eqref{item:QH_sgr}. The next Proposition relates the associated extension of $Q_v$ with the mapping class groups of the corresponding orbifolds:

\begin{prop}\label{lem:surface_case} 
Suppose $Q_v$ is a bounded Fuchsian group of conical type, and let $O$ and $O'$ the associated orbifolds. Let  $\gamma_1,\ldots, \gamma_N$ be the boundary components of $O$, let $H_i\le Q_v$ be the associated boundary subgroups, and define $\PMCG{G_v}$ and $\MCGb{Q_v}$ as above, with respect to the collection $\{H_1,\ldots,H_N\}$. Finally, for all $i$ let $T_i\in \MCG{O}$ be the Dehn twist around $\gamma_i$. Then there is an isomorphism of short exact sequences:
\[
\begin{tikzcd}
1 \arrow[r] & \langle H_i\rangle_{i=1,\ldots, N}  \arrow[r] \arrow[d, "\cong"] & \MCGb{Q_v}  \arrow[r] \arrow[d, "\cong"] &\PMCG{Q_v}   \arrow[r] \arrow[d, "\cong"] & 1 \\
1 \arrow[r] & \langle T_i\rangle_{i=1,\ldots, N} \arrow[r] & \MCG{O} \arrow[r] & \PMCG{O'}\arrow[r] & 1
\end{tikzcd}
\]
In particular, the above extension is central and bounded by \Cref{lem:mcg_central_and_bounded}.
\end{prop}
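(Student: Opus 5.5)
The plan is to build the right-hand vertical isomorphism first, lift it to the middle one, and then check that the left-hand square commutes; the five lemma is not needed once we have explicit maps.

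\textbf{Right-hand map.} Here $\AutH{Q_v}{\Gamma}$ consists of automorphisms of $Q_v$ acting as a conjugation on each $H_i$. Since each $H_i\cong\Z$ is maximal cyclic in $Q_v$, and since an automorphism acting by conjugation on every boundary subgroup preserves the conjugacy class of each $H_i$ (and their generators), the image $\PMCG{Q_v}=\AutH{Q_v}{\Gamma}/\Inn{Q_v}$ is exactly the subgroup of $\Outstar{Q_v}$ fixing each peripheral conjugacy class together with its orientation. Note that $Z(Q_v)$ is trivial by \Cref{lem:nofinite_normal_sgr_orbifold}, so $\Inn{Q_v}\cong Q_v$. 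Under the Dehn--Nielsen--Baer isomorphism $\EMCG{O'}\cong\Outstar{Q_v}$ of \Cref{thm:DNB}, this subgroup corresponds to the mapping classes of $O'$ that fix each puncture and do not reverse the peripheral loops. I would check that this is precisely $\PMCG{O'}$, understood in the orientable case, or in the non-orientable case as the image of $\MCG{O}$ under capping. This is essentially the remark after \Cref{thm:DNB} identifying $\PMCG{O'}$ with a McCool subgroup.

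\textbf{Middle map.} Next I would construct $\Phi\colon\MCG{O}\to\MCGb{Q_v}$ as follows. Pick one basepoint $x_i$ on each boundary curve $\gamma_i$, a base point $x_0$ in the interior, and arcs $\delta_i$ from $x_0$ to $x_i$. A homeomorphism $f$ of $O$ fixing $\partial O$ pointwise, isotoped to fix $x_0$, induces an automorphism $\alpha=f_*$ of $Q_v=\pi_1^{orb}(O,x_0)$ via the lift construction before \Cref{thm:DNB}. For each $i$, the loop $\delta_i\cdot f(\delta_i)^{-1}$ gives an element $a_i\in Q_v$ with $\alpha|_{H_i}=c_{a_i}$, because $f$ fixes $\gamma_i$ pointwise. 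Set $\Phi(f)=[(\alpha;a_1,\dots,a_N)]$. I would then verify three things: the cocycle identity defining multiplication in $\AutbH{Q_v}{\Gamma}$ is exactly the composition rule for these arcs; changing $x_0$ or the isotopy changes the tuple by the diagonal image of $Q_v$; and $\Phi$ is a homomorphism. Under $\Phi$, the Dehn twist $T_i$ goes to $(\mathrm{id};1,\dots,h_i,\dots,1)$, where $h_i$ generates $H_i$, since twisting changes $\delta_i$ by a loop around $\gamma_i$. This gives commutativity of the left square, modulo the quotient by $Z(Q_v)=1$. Commutativity of the right square holds by construction, since forgetting the $a_i$ recovers $f_*$ on $O'$.

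\textbf{Bijectivity and main obstacle.} The left arrow is an isomorphism because both groups are free Abelian of rank $N$. Indeed, $Z_{Q_v}(H_i)=H_i$ by maximality, as in \Cref{jsj:centralisers}, and on the other side we use \Cref{lem:capping_ext}. Once the left and right maps are isomorphisms and both squares commute, the five lemma makes $\Phi$ an isomorphism. Centrality and boundedness then transfer from \Cref{lem:capping_ext} and \Cref{lem:mcg_central_and_bounded}.

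The main obstacle I expect is the right-hand identification in the non-orientable and cone-point cases. The delicate points are matching the orientation of peripheral loops, preserving cone weights, and the distinction between $\PMCG{O'}$ and the capped image. There, I would pass to the surface $S$ with cone points removed via \Cref{lem:just_surface_mcg}, and use the description from \Cref{thm:orientable_double_cover_centraliser} to compare with the orientable capping sequence.
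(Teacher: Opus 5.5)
Your proposal is correct. It has the same outer skeleton as the paper's proof: the right-hand isomorphism comes from \Cref{thm:DNB}, the left-hand one from $Z_{Q_v}(H_i)=H_i$ and \Cref{lem:capping_ext}, and the five lemma finishes. The difference is the middle map, which you build by a different method and in the opposite direction.

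The paper constructs $\MCGb{Q_v}\to\MCG{O}$ as follows:
\begin{itemize}
\item it glues a one-holed torus $\Sigma_i$ onto each $\gamma_i$ to obtain an orbifold $P$;
\item it extends $(\alpha;a_1,\dots,a_N)$ to the automorphism of $\piorb{P}$ that is $\alpha$ on $Q_v$ and conjugation by $a_i$ on $\pi_1(\Sigma_i)$;
\item it realises this automorphism via Dehn--Nielsen--Baer for $P$, and notes that the resulting class is supported on $O$.
\end{itemize}
You instead define $\MCG{O}\to\MCGb{Q_v}$ directly by tracking arcs to points of $\partial O$. This is an orbifold version of a groupoid Dehn--Nielsen--Baer map, and it needs only covering theory on $O$. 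Write $\widetilde f$ for the lift fixing $\widetilde x_0$; then $a_i$ is the unique element with $\widetilde f(\widetilde x_i)=a_i\widetilde x_i$. With this description, the following are each a one-line computation:
\begin{itemize}
\item the group law $(\alpha\beta;\alpha(b_i)a_i)$;
\item the effect of an isotopy rel $\partial O$, which is left multiplication by the image $(c_g;g,\dots,g)$ of some $g\in Q_v$;
\item the formula $\Phi(T_i)=(\mathrm{id};1,\dots,h_i^{\pm1},\dots,1)$.
\end{itemize}
Your route also avoids two facts the paper's argument uses implicitly. The first is that a class of $P$ fixing every curve of each $\Sigma_i$ is isotopic to one supported on $O$. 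The second is that extension by the identity $\MCG{O}\to\MCG{P}$ is injective, which is what makes that support a well-defined element of $\MCG{O}$. In exchange, the paper's route needs no basepoint bookkeeping and treats cone points and non-orientability uniformly by black-boxing \Cref{thm:DNB}. It is also visibly the one-vertex model of Levitt's extension map $\lambda$ used later.

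There are three small fixes to make:
\begin{itemize}
\item Your opening sentence says the five lemma is not needed, but you rightly use it at the end, since the explicit $\Phi$ is not obviously bijective.
\item The left arrow is an isomorphism not merely because both sides are free Abelian of rank $N$. It is one because $\Phi$ sends the basis $\{T_i\}$ to the basis $\{(\mathrm{id};1,\dots,h_i^{\pm1},\dots,1)\}$ of $\prod_i H_i$.
\item With left-to-right concatenation you need $a_i=[f(\delta_i)\cdot\delta_i^{-1}]$, not $[\delta_i\cdot f(\delta_i)^{-1}]$. Only this choice gives both $\alpha|_{H_i}=c_{a_i}$ and the stated multiplication rule.
\end{itemize}
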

\noindent Notice that the isomorphism on the left identifies the algebraic Dehn twist by a generator of $H_i$ with the homonymous mapping class $T_i$, thus justifying the notation.

\begin{proof} We first justify that the kernel of the extension above is  $\langle H_i\rangle_{i=1,\ldots, N}$. Indeed, by \Cref{MCG_sequence} the kernel should be $\frac{\prod_{i=1}^N Z_{Q_v}(H_i)}{Z(Q_v)}$; however $Q_v$ has trivial centre by \Cref{lem:nofinite_normal_sgr_orbifold}, each $H_i$ is isomorphic to $\Z$, and it is maximal inside $Q_v$, and therefore coincide with its centraliser. 

We now produce the vertical isomorphisms. By construction, $\PMCG{Q_v}$ is precisely the subgroup of $\Outstar{Q_v}$ acting by conjugations on boundary subgroups. Then the Dehn-Nielsen-Baer isomorphism from \Cref{thm:DNB} identifies the latter subgroup with $\PMCG{O'}$, thus giving the isomorphism on the right. Next, for every $i\le N$ we can identify $H_i$ with the subgroup generated by the corresponding Dehn twist, and this gives the isomorphism on the left of the diagram. 

The Proposition then follows from the Five Lemma if we produce a map $\lambda\colon \MCGb{Q_v}\to  \MCG{O}$ making the diagram commute. Let $P$ be the orbifold obtained by attaching a torus with one hole $\Sigma_i$ to each circular boundary component of $O$. By the Seifert-van Kampen theorem (in its orbifold version, see e.g. \cite[page 307]{Thurston_geom_topol_3mfd}), the orbifold fundamental group of $P$ splits as a graph of groups whose vertex groups are $\piorb{O}$ and $N$ copies of $\pi_1(\Sigma_i)$, and whose edge groups are the $H_i$'s for $i\le N$. Now given $\overrightarrow{\alpha}=(\alpha;a_i,\ldots, a_N)$ a representative of an element in $\MCGb{Q_v}$, we define $A\in\Aut{\piorb{P}}$ by doing $\alpha$ on $\piorb{O}=Q_v$, and by conjugating each $\pi_1(\Sigma_i)$ by $a_i$. By the Dehn-Nielsen-Baer theorem for $P$, the outer automorphism class of $A$ is induced by a mapping class inside $\MCG{P}$, which is supported on $O\subseteq P$ as $A$ preserves the conjugacy class of each curve on $\Sigma_i$ for all $i$. Hence we get a map $\lambda\colon \MCGb{Q_v}\to  \MCG{O}$, which is a homomorphism by construction. 

We are left to check that the diagram commutes. For the left square, an element $a_i\in Z(H_i)$ is mapped to $\overrightarrow{\text{id}}=(\text{id};1,\ldots,1,a_i,1,\ldots 1)$, whose image under $\lambda$ is precisely the Dehn Twist around $\gamma_i$.  Furthermore, given $\overrightarrow{\alpha}\in \MCGb{Q_v}$, the restriction of $\lambda(\overrightarrow{\alpha})$ to $O'$, seen as the interior of $O$, is precisely the mapping class realising $\alpha\in \PMCG{Q_v}$ via the Dehn-Nielsen-Baer isomorphism, so the square on the right also commutes.
\end{proof}

\begin{cor}\label{cor:MCG_seq_bounded_V2}
    If $v\in V_2$, the extension from \Cref{MCG_sequence} is central and bounded. Moreover, $\PMCG{G_v}$ is a finite extension of a finite-index subgroup of $\PMCG{Q_v}$.
\end{cor}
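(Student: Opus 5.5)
The plan is to relate the extension \eqref{MCG_sequence} for $G_v$ to the extension for $Q_v$ from \Cref{lem:surface_case}, and then transfer centrality and boundedness. Centrality is immediate: by \Cref{jsj:centralisers}, $Z_{G_v}(G_e)=Z(G_e)\le G_e$ for every $e\in E_v$, so \Cref{lem:when_autLevitt_is_central} applies. The real content is boundedness and the description of $\PMCG{G_v}$.

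First I would build the comparison maps. By \Cref{thm:JSJ}, each incident edge group $G_e$ is $\pi_v^{-1}(H_i)$ for a boundary subgroup $H_i$ of $Q_v$. Since $K_v$ is characteristic, the map $\rho$ from \Cref{lem:rho_map} sends $\AutH{G_v}{\Gamma}$ into the automorphisms of $Q_v$ acting by conjugation on each $H_i$. It also sends tuples $(\alpha;a_{e_1},\dots,a_{e_k})$ to $(\rho(\alpha);\pi_v(a_{e_1}),\dots,\pi_v(a_{e_k}))$, and this is compatible with the group laws. Inner automorphisms map to inner ones, so we get a commutative diagram of short exact sequences from \eqref{MCG_sequence} for $G_v$ to the one for $Q_v$. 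The kernels are related by the map $\prod_e Z(G_e)/Z(G_v)\to \prod_i H_i$ induced by $\pi_v$.

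Next, the ``moreover'' part. Following \Cref{lem:rho_map} and \Cref{cor:out_of_QH}, the map $\PMCG{G_v}\to\PMCG{Q_v}$ has finite kernel, since it is contained in the image of $\ker\rho$. To see that its image has finite index, I would use that a finite-index subgroup of $\Aut{Q_v}$ lifts to $\Aut{G_v}$. A lift of an automorphism acting by conjugation by $h_i$ on $H_i$ acts on $\pi_v^{-1}(H_i)=G_e$ as conjugation by a lift of $h_i$, composed with an automorphism of $G_e$ trivial modulo $K_v$. Such automorphisms of $G_e$ form a finite set by the same $1$-cocycle count as in \Cref{lem:rho_map}. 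Hence some finite-index subgroup consists of lifts acting by conjugation on all $G_e$. I expect this to be the fiddly point, because the map $\phi\mapsto(\phi|_{G_e}$ modulo conjugation$)$ is not a homomorphism on the nose. I would handle it by working instead with the finite-index stabiliser of this data in the finite set $\Out{G_e}$ (\Cref{lem:finite_out}), composed with conjugation adjustments.

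Finally, boundedness. Let $\psi\colon\MCGb{Q_v}\to\prod_i H_i\cong\Z^N$ be a quasihomomorphism which is the identity on the kernel, as given by \Cref{lem:surface_case} and \Cref{bounded via qm:Abelian_ker}. Composing with the comparison map gives a quasihomomorphism $\MCGb{G_v}\to\Z^N$. The kernel $\prod_e Z(G_e)/Z(G_v)$ maps to $\prod H_i$ with finite kernel, because $Z(G_e)\cap K_v$ is finite, and with finite-index image, because $\pi_v(Z(G_e))$ is infinite cyclic inside $H_i$. Hence, after passing to the finitely generated abelian kernel $A$, there is a quasihomomorphism $\Z^N\to A$ whose composition with $A\to\Z^N$ is the identity up to bounded error. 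This follows from the structure of finitely generated abelian groups: quasi-invert on a finite-index free part and treat torsion as bounded noise. Composing these, and applying a bounded modification to obtain exactly the identity on $A$ (possible since the error is bounded on $A$), gives the required quasihomomorphism. \Cref{bounded via qm:Abelian_ker} then concludes the argument.
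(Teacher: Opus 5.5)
Your proposal is correct and follows essentially the same route as the paper. Centrality comes from \Cref{jsj:centralisers} and \Cref{lem:when_autLevitt_is_central}. The ``moreover'' part comes from showing that lifts of elements of $\overline{\rho}(\Out{G_v})\cap\PMCG{Q_v}$ preserve each $G_{e_i}$ up to conjugacy, and that $\PMCG{G_v}$ has finite index in that subgroup because each $\Out{G_{e_i}}$ is finite. Boundedness comes from pulling back the quasihomomorphism of \Cref{lem:surface_case} along the comparison map $(\alpha;a_{e})\mapsto(\rho(\alpha);\pi_v(a_{e}))$.

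The only difference is cosmetic. You use an abstract quasi-inverse of the kernel map $\prod_e Z(G_e)/Z(G_v)\to\prod_i H_i$, whereas the paper uses homomorphic sections $t_i\colon H_i\to G_{e_i}$ followed by a bounded modification. Also, the step you flag as fiddly is actually a genuine homomorphism $\phi\mapsto(\phi|_{G_{e_i}})_i$ into $\prod_i\Out{G_{e_i}}$ on the subgroup preserving each $G_{e_i}$ up to conjugacy, since $N_{G_v}(G_{e_i})=G_{e_i}$ by maximality; its kernel is exactly $\PMCG{G_v}$.
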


\begin{proof} $Z_{G_v}(G_e)\subseteq G_e$ by \Cref{jsj:centralisers}, so the extension is central by \Cref{lem:when_autLevitt_is_central}. We therefore focus on boundedness.

    Let $\overline{\rho}\colon \Out{G_v}\to \Out{Q_v}$ be the map sending every outer automorphism of $G_v$ to the induced outer automorphism of $Q_v$, which has finite kernel and finite-index image by \Cref{cor:out_of_QH}. Let $\{H_i\}_{i=1}^N$ be the boundary subgroups of $Q_v$, and for every $i$ let $G_{e_i}\le G_v$ be the edge group mapping to $H_i$ under $\pi_v\colon G_v\to Q_v$. Let $\PMCG{Q_v}^\ell=\overline{\rho}(\Out{G_v})\cap\PMCG{Q_v}$, which has finite-index in $\PMCG{Q_v}$. Clearly $\overline{\rho} (\PMCG{G_v})$ is contained inside $\PMCG{Q_v}^\ell$, and we claim that it has finite-index. Indeed, let $\alpha\in \Aut{Q_v}$ be an automorphism whose outer class belongs to $\PMCG{Q_v}^\ell$, and let $a_1,\ldots,a_N\in Q_v$ such that $\alpha$ is the conjugation by $a_i$ on $H_i$. If $\beta\in \rho^{-1}(\alpha)$, and if we choose $g_i\in\pi_v^{-1}(a_i)$ for all $i$, then for every $p\in G_{e_i}$ there exists $k\in K_v$ such that
    \[\beta(p)=g_i pg_i^{-1}k=g_i p(g_i^{-1}kg_i) g_i^{-1}\in g_i G_{e_i}g_i^{-1}.\]
    In other words, $\PMCG{Q_v}^\ell$ is contained in the $\overline{\rho}$-image of the subgroup $\mathcal M_v$ of $\Out{G_v}$ which preserves each $G_{e_i}$ up to conjugacy; in turn, the latter is a finite-index overgroup of $\PMCG{G_v}$, as each $G_{e_i}$ is virtually cyclic and therefore has finite outer automorphism group by \Cref{lem:finite_out}. Since $\overline{\rho}(\PMCG{G_v})\le \PMCG{Q_v}^\ell\le \overline\rho(\mathcal M_v)$, this proves that $\overline\rho$ maps $\PMCG{G_v}$ to a finite-index subgroup of $\PMCG{Q_v}^\ell$, hence of $\PMCG{Q_v}$. Notice that we have already proven the ``moreover" part of the statement.

    Now consider the diagram
    \[\begin{tikzcd}
        1\ar{r}&\frac{\prod_{i=1}^N Z(G_{e_i})}{Z(G_v)}\ar{r}\ar[d, "\widehat \pi_v"]&\MCGb{G_v}\ar{r}\ar[d, "\widehat \rho"]&\PMCG{G_v}\ar{r}\ar[d, "\overline\rho"]&1\\
        1\ar{r}&\langle H_i\rangle_{i=1,\ldots, N}\ar{r}&\MCGb{Q_v}\ar{r}&\PMCG{Q_v}\ar{r}&1\\
    \end{tikzcd}\]
    The map $\widehat \rho$ is defined as follows: given $\overrightarrow{\alpha}=(\alpha; a_1,\ldots, a_N)\in \AutbH{G_v}{\Gamma}$, $\widehat \rho$ maps the outer class of $\overrightarrow{\alpha}$ to the outer class of $(\rho(\alpha); \pi_v(a_1),\ldots, \pi_v(a_N))$. The map $\widehat \pi_v$ on the left is induced by $\pi_v$; this is because $\pi_v$ surjects each $G_{e_i}$ onto $H_i$ and is trivial on the centre of $G_v$, which is contained inside $K_v$. Notice that the diagram commutes by construction. 

    We are left to prove that the top extension is bounded: we shall deduce it from the fact that the bottom one is bounded by \Cref{lem:surface_case}, with similar arguments as in \cite[Lemma 2.12]{FFMS}. For every $i\le N$ let $t_i\colon H_i\to G_{e_i}$ be a homomorphic section of $\pi_v$, and notice that $t_i(H_i)\cap Z(G_{e_i})$ is non-trivial. Since the bottom extension is bounded, \Cref{bounded via qm:Abelian_ker} provides a quasihomomorphism $\psi\colon \MCGb{Q_v}\to \prod_{i=1}^N H_i$ which is the identity on $\prod_{i=1}^N H_i$. Then the composition
    \[\MCGb{G_v}\xrightarrow{\widehat \rho} \MCGb{Q_v}\xrightarrow{\psi}\langle H_i\rangle_{i=1,\ldots, N}\xrightarrow{\frac{\prod_{i=1}^N t_i}{Z(G_v)}}\frac{\prod_{i=1}^N t_i(H_i)}{Z(G_v)}\]
    is a quasihomomorphism which is the identity on $\frac{\prod_{i=1}^N (t_i(H_i)\cap Z(G_{e_i}))}{Z(G_v)}$. A bounded modification of the above map gives a quasihomomorphism $\MCGb{G_v}\to \frac{\prod_{i=1}^N Z(G_{e_i})}{Z(G_v)}$ which is the identity on $\frac{\prod_{i=1}^N Z(G_{e_i})}{Z(G_v)}$; hence the extension is bounded, again by \Cref{bounded via qm:Abelian_ker}.
\end{proof}

\noindent We are ready to prove the following, more precise version of \Cref{thmintro:virtual_bounded}:

\begin{thm}\label{thm:M} Let $G$ be a one-ended hyperbolic group. Then $\Out{G}$ is virtually a direct product $\Z^q\times M$, where $M$ fits into a commutative diagram whose rows are bounded central extensions
\[\begin{tikzcd}
    1\ar{r}&\prod_{v\in V_2}\frac{\prod_{e\in E_v} Z(G_e)}{Z(G_v)} \ar{r}\ar[d, "\lambda", two heads] &\prod_{v\in V_2}\MCGb{G_v}\ar[r,"\pi"]\ar[d, "\lambda", two heads]&\prod_{v\in V_2}\PMCG{G_v}\ar{r}\ar[d,"\text{id}"]&1\\
    1\ar{r}&Z_s \ar{r}&M\ar[r, "\rho_1"]&\prod_{v\in V_2}\PMCG{G_v}\ar{r}&1\\
\end{tikzcd}\]
\end{thm}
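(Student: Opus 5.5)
We will work inside Levitt's framework. Let $\lambda\colon\prod_{v\in V}\MCGb{G_v}\to\Out{G}$ be the extension map \eqref{eq:lambda_Levitt}, which has finite-index image. First, pass to finite-index subgroups where convenient. For $v\in V_1$, $\MCGb{G_v}$ is an extension of the finite group $\PMCG{G_v}$ (a subgroup of the finite $\Out{G_v}$ by \Cref{lem:finite_out}) by an Abelian group of twists. For $v\in V_3$, $\PMCG{G_v}$ is finite by the lemma on rigid vertices. Hence, up to finite index, the contribution of $V_1\cup V_3$ to the image of $\lambda$ consists only of twists. These are central elements coming from $Z_{G_v}(G_e)$, which is virtually $Z(G_e)$ and hence virtually cyclic. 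Let $\mathcal T_{13}$ be the $\lambda$-image of the twist groups at vertices in $V_1\cup V_3$, made torsion-free and of finite index. Set
\[M=\lambda\Bigl(\prod_{v\in V_2}\MCGb{G_v}\Bigr),\qquad Z_s=\lambda\Bigl(\prod_{v\in V_2}\tfrac{\prod_{e\in E_v}Z(G_e)}{Z(G_v)}\Bigr),\]
after replacing $\MCGb{G_v}$ by the preimage of the twists $Z(G_e)$, which has finite index.

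The key steps, in order, are as follows.

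\emph{Step 1.} The image of $\lambda$ is virtually generated by $M$ and $\mathcal T_{13}$. Twists at different vertices commute, and elements of $\MCGb{G_v}$ for distinct $v$ commute under $\lambda$ because they have disjoint supports. Twists at $V_1\cup V_3$ along an edge $e$ act near $e$ as conjugation by elements of $Z(G_e)$. I expect this forces them to commute with $M$: an automorphism of $G_v$ with $v\in V_2$ acts on $G_e$ by conjugation by $a_e$, which centralises $Z(G_e)$ up to the finite index where $Z(G_e)$ is central.

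\emph{Step 2.} Split off the direct factor. The group $\mathcal T_{13}M$ is a quotient of $\mathcal T_{13}\times M$, with kernel isomorphic to $\mathcal T_{13}\cap M$. Choose a complement $\Z^q\le\mathcal T_{13}$ to the subgroup $\mathcal T_{13}\cap M$, after passing to finite index in the free Abelian group $\mathcal T_{13}$ so that the intersection becomes a direct summand up to finite index. This gives $\Z^q\times M$ of finite index in $\Out{G}$.

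\emph{Step 3.} Identify the bottom row. The quotient $M/Z_s$ maps onto $\prod_{v\in V_2}\PMCG{G_v}$, and the map is injective. Injectivity follows because the kernel $N$ of $\lambda$ lies in the twist groups, as stated after \eqref{eq:lambda_Levitt}. Hence any element of $\prod_{v\in V_2}\MCGb{G_v}$ mapping into $Z_s$ differs from an element of the twist group by an element of $N\cap\prod_{v\in V_2}\MCGb{G_v}$, which is itself a twist. Commutativity of the diagram then holds by construction. The extension $Z_s\to M$ is central because $Z_s$ is the image of the kernel of a central extension, by \Cref{cor:MCG_seq_bounded_V2}, under a surjection.

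\emph{Step 4.} Prove boundedness. The top row is bounded by \Cref{cor:MCG_seq_bounded_V2} together with \Cref{cor:prod_of_bounded_is_bounded}. The bottom row is its quotient by $L=\ker\lambda$, which lies inside the kernel of the top row. Hence the bottom row is bounded by \Cref{cor:quot_of_bounded_by_kernel}, after identifying $M$ with $\bigl(\prod_{v\in V_2}\MCGb{G_v}\bigr)/L$.

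The main obstacle is Steps 1–2, and in particular checking that $V_1$ and $V_3$ twists commute with $M$ and that they split off as a direct factor intersecting $M$ trivially up to finite index. I would first treat this by an explicit analysis of relations among twists on a single edge $e=(v,w)$ with $w\in V_1$. There the twists near $v$ and near $w$ by the same element of $Z(G_e)$ coincide in $\Out{G}$, so each $V_1$ twist along an edge adjacent to $V_2$ already lies in $M$ up to finite index. The twists genuinely outside $M$ are then those along edges adjacent to rigid vertices, and these form the free Abelian factor $\Z^q$.
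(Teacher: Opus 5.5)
Your proof is correct. For boundedness it is exactly the paper's argument. The top row is bounded by \Cref{cor:MCG_seq_bounded_V2} and \Cref{cor:prod_of_bounded_is_bounded}. The bottom row is its quotient by $L=\ker\lambda\cap\prod_{v\in V_2}\MCGb{G_v}$, which lies in the top kernel, so \Cref{cor:quot_of_bounded_by_kernel} applies.

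Where you differ is the structural part. The paper imports the virtual splitting $\Z^q\times M$ from the proof of Levitt's Theorem 5.3, and the map $\rho_1$ from his Proposition 4.2. You rebuild both from a short list of inputs:
\begin{itemize}
\item $\lambda$ is a homomorphism on $\prod_{v\in V}\MCGb{G_v}$, with finite-index image and kernel contained in the twist groups;
\item $\PMCG{G_v}$ is finite for $v\in V_1\cup V_3$;
\item an elementary complement argument in a free Abelian group: choose $C\le\mathcal T_{13}$ with $C\cap M=1$ and $C\cdot(\mathcal T_{13}\cap M)$ of finite index, so that $CM\cong C\times M$ has finite index.
\end{itemize}
Your $\rho_1$, defined on $M$ by $\lambda(x)\mapsto\pi(x)$, is intrinsic. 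It is well defined, with kernel exactly $Z_s$, precisely because $\ker\lambda$ lies in the twists. Your route makes transparent which features of Levitt's construction are actually used. The citation buys brevity and Levitt's finer bookkeeping of which twists survive, at the cost of your complement being non-canonical and $q$ not being identified.

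A few points in your write-up should be tidied.
\begin{itemize}
\item Commutation of the $V_1\cup V_3$ twists with $M$ is not an obstacle. It is automatic because $\lambda$ is a homomorphism out of a direct product, so your ``I expect'' reasoning about $a_e$ centralising $Z(G_e)$ is unnecessary.
\item The twist groups at $V_1$ vertices are only virtually Abelian, since $Z_{G_v}(G_e)$ can be non-Abelian. This is harmless, because you pass to a finite-index free Abelian subgroup.
\item No replacement of $\MCGb{G_v}$ is needed for $v\in V_2$. \Cref{jsj:centralisers} already gives $Z_{G_v}(G_e)=Z(G_e)$, so the top row is literally the product of the sequences \eqref{MCG_sequence}.
\item Your closing heuristic, that the $\Z^q$ factor consists of the twists along edges adjacent to rigid vertices, is false in general. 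The relation at a cylinder vertex can place such a twist inside $M$; this is the analogue of $T_1T_2T_3=1$ in the paper's example, with one torus replaced by a rigid piece. Your Step 2 complement argument does not depend on this heuristic, so the proof stands.
\end{itemize}
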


\begin{proof} As we now explain, \Cref{thm:M} is implicit in the proof of \cite[Theorem 5.3]{Levitt_authyp} (which again applies verbatim to the $\mathcal Z$-JSJ tree of cylinders from \Cref{thm:JSJ}). With Levitt's notation, $M$ is the image of the restriction of the map $\lambda \colon \prod_{v\in V}\MCGb{G_v}\to \Out{G}$ from \Cref{eq:lambda_Levitt} to the mapping class groups of vertex groups of type \eqref{item:QH_sgr}. Furthermore, by \cite[Proposition 4.2]{Levitt_authyp} there exists a map $\rho_1$ from a finite-index subgroup of $\Out{G}$ to $\prod_{v\in V}\PMCG{G_v}$ which, when precomposed with $\lambda$, gives the natural projection $\pi\colon \prod_{v\in V}\MCGb{G_v}\to \prod_{v\in V}\PMCG{G_v}$. The restriction of $\pi$ to $\prod_{v\in V_2}\MCGb{G_v}$ is a central extension by \Cref{jsj:centralisers} combined with \Cref{lem:when_autLevitt_is_central}; furthermore, the image $Z_s\coloneq \lambda \left(\prod_{v\in V_2}\frac{\prod_{e\in E_v} Z(G_e)}{Z(G_v)}\right)$ lies in the centre of $M$ because $\lambda\colon \prod_{v\in V}\PMCG{G_v}\to M$ is surjective, and therefore the bottom extension is central as well.
    
    Moving to boundedness, the top extension is a direct product of bounded central extensions by \Cref{cor:MCG_seq_bounded_V2}, and is therefore bounded by \Cref{cor:prod_of_bounded_is_bounded}. The bottom extension is then bounded by \Cref{cor:quot_of_bounded_by_kernel}.
\end{proof}

\subsection{(Virtual) hierarchical hyperbolicity}\label{subsec:virtual_HHG_of_Out}
We now move to the proof of Theorems~\ref{thmintro:virtual_HHG} and~\ref{thmintro:out_cocompact_is_HHG}. To make the exposition lighter, we defer the lengthy definition of a \emph{hierarchically hyperbolic group} (HHG) to \Cref{subsec:HHS}, as we will need the full power of the hierarchical machinery only in the proof of the counterexample, \Cref{thmintro:not_HHG}. The reader can therefore blackbox the following facts about HHGs, which are the only ones we shall need in this Section.

Firstly, we recall the motivating example of a HHG:
\begin{thm}[{\cite[Theorem 11.1]{HHS_II}}]\label{thm:mcg_is_HHG}
    Let $S$ be an orientable surface of finite-type. Then $\EMCG{S}$ is a HHG. 
\end{thm}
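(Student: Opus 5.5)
This is \cite[Theorem 11.1]{HHS_II}, so we only sketch how the proof goes there and what needs checking for the extended group. The plan is to write down the standard hierarchically hyperbolic structure coming from Masur--Minsky theory, check that it is $\EMCG{S}$-equivariant, and observe that the axioms reduce to known results about curve graphs and subsurface projections.

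\emph{The structure.}
\begin{itemize}
    \item The index set $\frakS$ consists of isotopy classes of essential, non-pants subsurfaces of $S$, including annuli around essential simple closed curves, together with $S$ itself.
    \item To each $U\in\frakS$ we associate its curve graph $\C{U}$. When $U$ is an annulus, this is the annular curve graph of \cite[Section 2]{masurminsky2}, which is a quasiline.
    \item Nesting $\nest$ is containment up to isotopy; orthogonality means the subsurfaces can be isotoped to be disjoint; transversality covers all remaining pairs.
    \item The projections $\pi_U\colon \EMCG{S}\to 2^{\C{U}}$ are subsurface projections of a fixed marking $\mu_0$, translated by the group element.
    \item The relative projections $\rho^U_V$ are the subsurface projection of $\partial U$ to $\C{V}$ when $U\trans V$ or $U\propnest V$. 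When $V\propnest U$, $\rho^U_V$ is the map $\C{U}\to 2^{\C{V}}$ given by subsurface projection.
\end{itemize}

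\emph{The axioms.} Most of them follow from the literature.
\begin{itemize}
    \item Uniform hyperbolicity of all $\C{U}$ is Masur--Minsky.
    \item Finite complexity holds because chains of nested subsurfaces have length bounded in terms of the genus and number of punctures of $S$.
    \item The consistency inequalities for transverse pairs are the Behrstock inequality.
    \item The nested consistency condition follows from the bounded geodesic image theorem.
    \item The uniqueness axiom and the distance formula come from the Masur--Minsky distance formula for markings.
    \item Large links: fix a geodesic in $\C{U}$ between the projections of two markings. Each proper subsurface with large projection has boundary within distance one of some vertex of this geodesic, again by bounded geodesic image. This yields boundedly many subsurfaces that contain all the large ones.
    \item Partial realization is proved by building markings that contain prescribed curves in pairwise disjoint subsurfaces.
    \item The orthogonality/container axiom is handled by taking complements. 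If $W\propnest U$ and some $V\propnest U$ is orthogonal to $W$, the complement of $W$ in $U$ serves as the container.
\end{itemize}

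\emph{The group part.} We need an action of $\EMCG{S}$ on $\frakS$ by automorphisms, with finitely many orbits, and isometries $\C{U}\to\C{gU}$ commuting with projections up to uniform error. Mapping classes, including orientation-reversing ones, send subsurfaces to subsurfaces and preserve nesting, orthogonality and transversality. There are finitely many orbits of subsurfaces, by the classification of surfaces applied to complements. Each $g$ induces isometries of curve graphs, and subsurface projections are natural, so the required equivariance holds exactly up to the choice of $\mu_0$. Finally, $\EMCG{S}$ acts properly and cocompactly on the marking graph, which is quasi-isometric to the group.

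\emph{Expected difficulty.} The delicate point is the annuli. For an orientation-reversing $g$, the induced map $\C{U}\to\C{gU}$ on annular curve graphs reverses the direction of twisting. So one should check that it is still an isometry intertwining the projections, which it is, being induced by a lift to the annular covers. This is the only place where passing from $\MCG{S}$ to $\EMCG{S}$ needs care. The large links and container axioms are the most technical verifications, but they are handled exactly as in \cite[Section 11]{HHS_II}.
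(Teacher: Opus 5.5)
Your outline is the standard structure that the paper invokes by citing \cite[Theorem 11.1]{HHS_II}, and your handling of orientation-reversing classes on annular curve graphs is fine. However, the container step fails for the index set you wrote down.

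The complement of $W$ in $U$ is in general disconnected, so it is not one of your domains. Even when it is connected it need not be a container. If $c$ is a component of $\partial W$ that is non-peripheral in $U$, then the annulus $A_c$ is disjoint from, hence orthogonal to, $W$. But $c$ is peripheral in the adjacent component of $U\setminus W$, so $A_c$ is not nested in the complement.

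Concretely, let $S$ be closed of genus $2$ and let $c$ be a separating curve with $S\setminus c=X\sqcup Y$.
For $W=X$ and $U=S$, your container $Y$ does not contain $A_c$.
For $W=A_c$, both $X$ and $Y$ are orthogonal to $W$. Every boundary curve of a connected $Q\propnest S$ with $X,Y\nest Q$ can be isotoped off $X$ and off $Y$, hence is isotopic to $c$. This forces $Q\in\{X,Y,A_c\}$, so no connected container exists at all.

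The standard fix has three parts:
\begin{itemize}
\item let $\frakS$ consist of possibly disconnected essential subsurfaces, with no component a pair of pants;
\item for disconnected $U$, let $\C{U}$ be the join of the curve graphs of its components, so it has diameter at most $2$;
\item take as container the union of the non-pants components of $U\setminus W$ together with, when $W$ is not an annulus, the annuli about the components of $\partial W$ that are non-peripheral in $U$.
\end{itemize}
You must then recheck the remaining axioms for these new domains. This is routine precisely because their coordinate spaces are bounded.

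There are two smaller points.

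In the large links axiom, knowing that $\partial V$ is within distance one of a vertex $c$ of the geodesic does not confine $V$, since $c$ may lie inside $V$. What the bounded geodesic image theorem actually gives is a vertex $c$ that misses $V$. Then $V$ is nested in a component of $U\setminus c$ or equals $A_c$, which yields at most three domains per vertex.

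If $S$ is allowed to have boundary, with mapping classes fixing it pointwise, you must also add the annuli about the boundary curves to $\frakS$. Otherwise the boundary Dehn twists, which have infinite order, have uniformly bounded projections to every domain, and the uniqueness axiom fails.
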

\noindent Several group operations preserve hierarchical hyperbolicity:
\begin{lem}\label{lem:HHGpropr} The following facts hold.
    \begin{enumerate}
        \item \label{lem:HHGpropr_fi-sgr_is_HHG} A finite-index subgroup of a HHG is a HHG.
        \item \label{lem:HHGpropr_finquot_is_HHG} Let $F\le G$ be a finite normal subgroup. If $G/F$ is a HHG then $G$ is a HHG.
        \item \label{lem:HHGpropr_prod} If $G$, $H$ are HHGs, then $G\times H$ is a HHG.
        \item \label{lem:HHGpropr_ext_is_HHG} Let $1\to K\to E\to G\to 1$ be a central extension, with finitely generated kernel. If $G$ is a HHG, then $E$ is a HHG if and only if the extension is bounded.
    \end{enumerate}
\end{lem}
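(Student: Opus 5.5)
This lemma collects four standard facts, and the plan is to derive each one from the literature on hierarchically hyperbolic groups, using the definition deferred to \Cref{subsec:HHS}. Item \eqref{lem:HHGpropr_fi-sgr_is_HHG}: given a HHG structure $(G,\frakS)$ and a finite-index subgroup $H\le G$, the same index set $\frakS$, hyperbolic spaces and projections already form a hierarchically hyperbolic space structure on $H$, since the inclusion $H\hookrightarrow G$ is a quasi-isometry. The only point to check is equivariance. $H$ acts on $\frakS$ by restricting the $G$-action, and there are still finitely many $H$-orbits, because every $G$-orbit splits into at most $[G:H]$ orbits of $H$. The compatibility conditions between the action and the projections are inherited directly from $G$.

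Item \eqref{lem:HHGpropr_finquot_is_HHG}: let $G/F$ act on a HHS structure $\frakS$. Pull this action back to $G$ through the quotient map $G\to G/F$. This map is a quasi-isometry, because its kernel $F$ is finite. So the structure on $G/F$ becomes a structure on $G$, with the same index set and the same hyperbolic spaces, and with projections precomposed by the quotient. All the equivariance axioms survive, since $G$ acts through $G/F$, and the number of orbits does not change. Item \eqref{lem:HHGpropr_prod}: this is the product construction of \cite{HHS_II}. The index set is the disjoint union of the two index sets, with every element of the first orthogonal to every element of the second. One then adds the finitely many extra domains that the product construction needs, namely the maximal element and the orthogonality containers, and gives them points as their hyperbolic spaces. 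The group $G\times H$ acts factorwise, and cofiniteness is immediate.

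Item \eqref{lem:HHGpropr_ext_is_HHG} has real content, and I would not prove it from scratch. The "if" direction is exactly \cite[Theorem 1.3]{FFMS}: a bounded central extension of a HHG with finitely generated kernel is a HHG. For the "only if" direction I would also cite \cite{FFMS}. The idea there is that, in a HHG $E$, the central subgroup $K$ is a finitely generated abelian subgroup acting on the structure. One uses the HHG structure to build a quasihomomorphism $E\to K$ that restricts to the identity on $K$, for instance by taking Busemann-type quasimorphisms (\Cref{ex:busemann}) on the domains where $K$ acts loxodromically, which are quasilines permuted by $E$. \Cref{bounded via qm:Abelian_ker} then gives boundedness. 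The main obstacle, if one wanted a self-contained argument, is this converse: it requires controlling how a central subgroup sits inside the hierarchy, which I would defer entirely to \cite{FFMS}.
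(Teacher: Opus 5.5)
Your proposal is correct and follows the paper's treatment: items (1) and (2) come straight from the definition, by letting the finite-index subgroup, respectively $G$ acting through $G/F$, act on the same HHS (as in \Cref{rem:fi_and_finite_ext_are_HHG}); item (3) is the product construction \cite[Corollary 8.28]{HHS_II}; and item (4), in both directions, is quoted from \cite[Theorem 3.12]{FFMS}. One minor slip in your description of (3): the product structure of \cite{HHS_II} in general adds one container for each domain of the factors, not finitely many new domains, but these containers form finitely many orbits, so cofiniteness is unaffected.
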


\begin{proof}
\Cref{lem:HHGpropr_fi-sgr_is_HHG} and \Cref{lem:HHGpropr_finquot_is_HHG}, follow from the definition of a HHG \cite[Definition 1.21]{HHS_II}; see \Cref{rem:fi_and_finite_ext_are_HHG} below for more details. Moreover, \Cref{lem:HHGpropr_prod} is  \cite[Corollary 8.28]{HHS_II}, and \Cref{lem:HHGpropr_ext_is_HHG} is \cite[Theorem 3.12]{FFMS}.
\end{proof}

\begin{rem}\label{rem:orbifold_mcg_is_HHG} Let $O$ be an \emph{orientable} orbifold of conical type. Recall from \Cref{lem:just_surface_mcg} that the mapping class group of $O$ is a finite-index subgroup of the mapping class group of the surface obtained by removing cone points, which is therefore itself orientable. Thus $\MCG{O}$ is a HHG, by combining \Cref{thm:mcg_is_HHG} and \Cref{lem:HHGpropr}.\eqref{lem:HHGpropr_fi-sgr_is_HHG}.
\end{rem}

\noindent We are finally ready to prove Theorems~\ref{thmintro:virtual_HHG} and ~\ref{thmintro:out_cocompact_is_HHG}. We first consider virtual surface groups, for which we do not need to pass to a subgroup of $\Out{G}$:
\begin{thm}\label{thm:Out(Vsurface)_is_HHG}
    Let $G$ be a finite extension of an orientation-preserving, cocompact Fuchsian group $Q$ of conical type. Then $\Out{G}$ is a HHG.
\end{thm}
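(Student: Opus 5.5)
The plan is to realise $\Out{G}$, up to finite kernels, as a finite-index subgroup of the extended mapping class group of the closed orientable orbifold $O=\Hyp/Q$, and then invoke the permanence properties in \Cref{lem:HHGpropr}. Since $Q$ is cocompact, $D_Q=\Hyp$, so $O=O'$ has no boundary and no punctures. Therefore every automorphism of $Q$ is type-preserving: there are no peripheral subgroups, and finite-order elements are exactly the elliptics. It follows that $\Outstar{Q}=\Out{Q}$, and the Dehn--Nielsen--Baer \Cref{thm:DNB} gives an isomorphism $\EMCG{O}\cong\Out{Q}$.

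First, by \Cref{cor:out_of_QH}, the map $\overline{\rho}\colon \Out{G}\to\Out{Q}$ has finite kernel $F$ and finite-index image $\overline{\rho}(\Out{G})\cong \Out{G}/F$. Since $F$ is a finite normal subgroup, \Cref{lem:HHGpropr}.\eqref{lem:HHGpropr_finquot_is_HHG} reduces the claim to showing that $\overline{\rho}(\Out{G})$ is a HHG. Second, by \Cref{lem:HHGpropr}.\eqref{lem:HHGpropr_fi-sgr_is_HHG}, it suffices to show that $\EMCG{O}$ contains $\overline\rho(\Out{G})$ as a finite-index subgroup of some HHG, or, more directly, that $\EMCG{O}$ is itself a finite-index subgroup of a HHG.

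For the second step, let $S$ be the orientable surface obtained from $O$ by removing the cone points; it is of finite type. By \Cref{lem:just_surface_mcg} (which is stated for extended mapping class groups), $\EMCG{O}$ is a finite-index subgroup of $\EMCG{S}$. By \Cref{thm:mcg_is_HHG}, $\EMCG{S}$ is a HHG. Hence $\EMCG{O}$ is a HHG, and so is its finite-index subgroup $\overline\rho(\Out{G})$. Finally, $\Out{G}$ is an extension of this group by the finite normal subgroup $F$, so $\Out{G}$ is a HHG.

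The main point needing care is the identification $\Out{Q}=\Outstar{Q}$ together with the applicability of \Cref{thm:DNB}. In particular, orientation-reversing outer automorphisms must be accounted for, and this is why one works with $\EMCG{}$ rather than $\MCG{}$. \Cref{thm:mcg_is_HHG} covers the extended group, so no passage to an orientation-preserving subgroup is needed, which is exactly what lets us avoid passing to a finite-index subgroup of $\Out{G}$. One should also check that $S$ is hyperbolic rather than sporadic, but \Cref{thm:mcg_is_HHG} applies to any finite-type surface, and sporadic cases give finite or virtually free groups, which are HHGs anyway.
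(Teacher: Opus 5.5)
Your proposal is correct and follows the same route as the paper's proof. You use \Cref{cor:out_of_QH} to reduce, up to a finite kernel, to a finite-index subgroup of $\Out{Q}$; then \Cref{thm:DNB} to identify $\Out{Q}$ with the extended orbifold mapping class group; then \Cref{lem:just_surface_mcg} and \Cref{thm:mcg_is_HHG} to see that this group is a HHG; and finally \Cref{lem:HHGpropr} to conclude. You also explicitly check that $\Out{Q}=\Outstar{Q}$ in the cocompact case and insist on $\EMCG{O}$ rather than $\MCG{O}$, which is needed because being a HHG does not pass to finite-index overgroups; this spells out a point the paper's short proof leaves implicit.
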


\begin{proof}
    By \Cref{cor:out_of_QH}, $\Out{G}$ is a finite extension of a finite-index subgroup of $\Out{Q}$. Since $Q$ is cocompact and of conical type, by \Cref{thm:DNB} $\Out{Q}$ coincides with the mapping class group of the orbifold $\Hyp/Q$; in turn, since $Q$ is orientation-preserving, the latter is a HHG by \Cref{rem:orbifold_mcg_is_HHG}. The conclusion now follows by \Cref{lem:HHGpropr}.\eqref{lem:HHGpropr_fi-sgr_is_HHG} and \eqref{lem:HHGpropr_finquot_is_HHG}.
\end{proof}

\noindent The proof for the general case is similar, but it involves the description of $\Out{G}$ as a virtual central extension from \Cref{subsec:mcg_a_la_levitt}:
\begin{thm}\label{thm:Out(Gone_end_hyp)_is_HHG}
   Let $G$ be a one-ended hyperbolic group. If all quadratically hanging subgroups in the $\mathcal Z$-JSJ tree of cylinders are orientation-preserving, then $\Out{G}$ is virtually a HHG.
\end{thm}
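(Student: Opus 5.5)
By \Cref{thm:M}, $\Out{G}$ contains a finite-index subgroup isomorphic to $\Z^q\times M$. By \Cref{lem:HHGpropr}.\eqref{lem:HHGpropr_prod}, and since $\Z^q$ is a HHG (for instance, by the same lemma applied to copies of the hyperbolic group $\Z$), it is enough to show that $M$ is a HHG. For this we use the bottom row of the diagram in \Cref{thm:M}. It is a central extension $1\to Z_s\to M\to \prod_{v\in V_2}\PMCG{G_v}\to 1$. Its kernel $Z_s$ is finitely generated, being a quotient of $\prod_{v\in V_2}\prod_{e\in E_v}Z(G_e)$, and there are finitely many edges, each $G_e$ virtually cyclic. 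The extension is bounded by \Cref{thm:M}. So by \Cref{lem:HHGpropr}.\eqref{lem:HHGpropr_ext_is_HHG}, $M$ is a HHG as soon as the base $\prod_{v\in V_2}\PMCG{G_v}$ is a HHG. By \Cref{lem:HHGpropr}.\eqref{lem:HHGpropr_prod}, this reduces to showing that each $\PMCG{G_v}$ is a HHG, for $v\in V_2$.

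Fix $v\in V_2$, with $1\to K_v\to G_v\to Q_v\to 1$ as in \Cref{thm:JSJ}. By \Cref{cor:MCG_seq_bounded_V2}, $\PMCG{G_v}$ is a finite extension of a finite-index subgroup $L$ of $\PMCG{Q_v}$. By \Cref{lem:surface_case}, $\PMCG{Q_v}\cong\PMCG{O'_v}$, where $O'_v=\Hyp/Q_v$. By hypothesis $Q_v$ is orientation-preserving, so $O'_v$ is an orientable orbifold of conical type. Then $\PMCG{O'_v}$ has finite index in $\MCG{O'_v}$, hence in the mapping class group of the orientable finite-type surface obtained by removing the cone points (\Cref{lem:just_surface_mcg}). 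That group in turn has finite index in $\EMCG{}$ of the same surface, which is a HHG by \Cref{thm:mcg_is_HHG}. Applying \Cref{lem:HHGpropr}.\eqref{lem:HHGpropr_fi-sgr_is_HHG} repeatedly, as in \Cref{rem:orbifold_mcg_is_HHG}, shows that $L$ is a HHG. Then \Cref{lem:HHGpropr}.\eqref{lem:HHGpropr_finquot_is_HHG} shows that $\PMCG{G_v}$ is a HHG.

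Finally, \Cref{lem:HHGpropr}.\eqref{lem:HHGpropr_fi-sgr_is_HHG} applies only to subgroups of a HHG, not to overgroups. So the conclusion is exactly that the finite-index subgroup $\Z^q\times M$ of $\Out{G}$ is a HHG, i.e.\ $\Out{G}$ is virtually a HHG.

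The main point needing care is the hypotheses of the central extension criterion: $Z_s$ must be finitely generated and the extension must be central and bounded. All three are supplied by \Cref{thm:M}. A degenerate case is $V_2=\emptyset$: the base is trivial, $M=Z_s$ is finitely generated abelian, and hence virtually $\Z^r$, which is a HHG. Orientability is used at exactly one place, to invoke \Cref{thm:mcg_is_HHG}.
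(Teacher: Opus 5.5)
Your proposal is correct and follows the paper's proof essentially step for step: you use \Cref{thm:M} to reduce to showing that $M$ is a HHG, which you get from \Cref{lem:HHGpropr}.\eqref{lem:HHGpropr_prod} and \eqref{lem:HHGpropr_ext_is_HHG} once each $\PMCG{G_v}$ is a HHG, and you obtain the latter from \Cref{cor:MCG_seq_bounded_V2}, \Cref{lem:surface_case} and \Cref{rem:orbifold_mcg_is_HHG}. Your write-up is slightly more careful than the paper's, whose first sentence writes $\PMCG{G_v}$ where $\PMCG{Q_v}$ is meant; you instead pass explicitly through the finite-index subgroup $L\le\PMCG{Q_v}$ before lifting to $\PMCG{G_v}$ via the finite-kernel extension.
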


\begin{proof}
    Firstly, if $v\in V_2$, then $\PMCG{G_v}$ coincides with the pure mapping class group of the underlying orbifold by \Cref{lem:surface_case}, and is therefore a HHG by \Cref{rem:orbifold_mcg_is_HHG} (here we are using that $Q_v$ is orientation-preserving and of conical type). \Cref{cor:MCG_seq_bounded_V2} then describes $\PMCG{G_v}$ as a finite extension of a finite-index subgroup of $\PMCG{G_v}$, so $\PMCG{G_v}$ is a HHG by \Cref{lem:HHGpropr}.\eqref{lem:HHGpropr_fi-sgr_is_HHG} and \eqref{lem:HHGpropr_finquot_is_HHG}. Now, the subgroup $M$ from \Cref{thm:M} is a bounded central extension of the direct product of $\PMCG{G_v}$, where  $v$ ranges in $V_2$, and is therefore a HHG by combining \Cref{lem:HHGpropr}.\eqref{lem:HHGpropr_prod} and \eqref{lem:HHGpropr_ext_is_HHG}. Finally, the same \Cref{thm:M} describes a finite-index subgroup of $\Out{G}$ as a direct product of copies of $\Z$ and $M$, so the conclusion follows from \Cref{lem:HHGpropr}.\eqref{lem:HHGpropr_prod} (and the fact that $\Z$, being hyperbolic, is also hierarchically hyperbolic). 
\end{proof}

\begin{rem}\label{rem:where_orientable}
    In the proofs of both \Cref{thm:Out(Vsurface)_is_HHG} and \Cref{thm:Out(Gone_end_hyp)_is_HHG}, the orientability assumption only appeared when we used that the mapping class group of an orientable orbifold of conical type is hierarchically hyperbolic; it could therefore be removed if one proved that the mapping class group of a \emph{non-orientable} orbifold of conical type is also a HHG.
\end{rem}

\section{\ldots but not a HHG}\label{sec:full_out_not_HHG}
In this Section we present an example of a torsion-free hyperbolic group whose full outer automorphism group fails to be a hierarchically hyperbolic group, proving \Cref{thmintro:not_HHG}. To this purpose, we first unpack the definition of a hierarchically hyperbolic group in the next subsection. 

\subsection{Background on hierarchical hyperbolicity}\label{subsec:HHS}
\begin{defn}[Hierarchically hyperbolic space]\label{defn:HHS}
Let $\delta>0$ and $X$ be a $(\delta,\delta)$--quasigeodesic space.  A \emph{hierarchically hyperbolic space (HHS) structure with constant $\delta$} for $X$ is the data of an index set $\frakS$ and a set $\{ \C{W} \colon W\in\frakS\}$ of $\delta$-hyperbolic spaces $(\C{W},\dist_W)$ such that the following axioms are satisfied.  \begin{enumerate}[label=(\arabic*{})]
    \item\textbf{(Projections.)}\label{axiom:projections} For each $W \in \frakS$, there exists a \emph{projection} $\pi_W \colon X \rightarrow 2^{\C{W}}$  that is a $(\delta,\delta)$--coarsely Lipschitz, $\delta$--coarsely onto, $\delta$--coarse map.
		
    \item \textbf{(Nesting.)} \label{axiom:nesting} If $\frakS \neq \emptyset$, then $\frakS$ is equipped with a  partial order $\nest$ and contains a unique $\nest$--maximal element, denoted by $S$. When $V\nest W$, we say $V$ is \emph{nested} in $W$.  For each $W\in\frakS$, we denote by $\frakS_W$ the set of all $V\in\frakS$ with $V\nest W$.  Moreover, for all $V,W\in\frakS$ with $V\propnest W$ there is a specified non-empty subset $\rho^V_W\subseteq \C{W}$ with $\diam(\rho^V_W)\leq \delta$.
        
    \item \textbf{(Finite complexity.)} \label{axiom:finite_complexity} Any $\nest$--chain has length at most $\delta$.
            
    \item \textbf{(Orthogonality.)} \label{axiom:orthogonal} The set $\frakS$ has a symmetric relation called \emph{orthogonality}. If $V$ and $W$ are orthogonal, we write $V\perp W$ and require that $V$ and $W$ are not $\nest$--comparable. Further, whenever $V\nest W$ and $W\perp U$, we require that $V\perp U$. We denote by $\frakS_W^\perp$ the set of all $V\in \frakS$ with $V\perp W$.

    \item \textbf{(Containers.)} \label{axiom:containers}  For each $W \in \frakS$ and $U \in \frakS_W$ with $ \frakS_W\cap \frakS_U^\perp \neq \emptyset$, there exists $Q \in\frakS_W-\{W\}$ such that $V \nest Q$ whenever $V \in\frakS_W \cap \frakS_U^\perp$.  We call $Q$ the \emph{container of $U$ in $W$}.
		
    \item \textbf{(Transversality.)}\label{axiom:transversality} If $V,W\in\frakS$ are not orthogonal and neither is nested in the other, then we say $V$ and $W$ are \emph{transverse}, denoted $V\trans W$.  Moreover, for all $V,W \in \frakS$ with $V\trans W$, there are non-empty sets $\rho^V_W\subseteq \C{W}$ and $\rho^W_V\subseteq \C{V}$, each of diameter at most $\delta$.

    \item \textbf{(Consistency.)} \label{axiom:consistency} For all $x \in X $ and $U,V,W\in\frakS$:
		\begin{itemize}
			\item  if $V\trans W$, then $\min\left\{\dist_{W}(\pi_W(x),\rho^V_W),\dist_{V}(\pi_V(x),\rho^W_V)\right\}\leq \delta$,
			\item if $U\nest V$ and either $V\propnest W$, or $V\trans W$ and $W\not\perp U$, then $\dist_W(\rho^U_W,\rho^V_W)\leq \delta$.
		\end{itemize}

\item \textbf{(Bounded geodesic image (BGI).)} \label{axiom:bounded_geodesic_image} For all  $V,W\in\frakS$ and for all $x,y \in X$, if  $V\propnest W$ and $\dist_V(\pi_V(x),\pi_V(y)) \geq \delta$, then every $\C{W}$--geodesic from $\pi_W(x)$ to $\pi_W(y)$ must intersect $ N_\delta(\rho_W^V)$.

    \item \textbf{(Large links.)} \label{axiom:large_link_lemma} For all $W\in\frakS$ and  $x,y\in X $, there exists a collection $\{V_1,\dots,V_m\}\subseteq\frakS_W -\{W\}$ such that $m\le \delta \dist_{W}(\pi_W(x),\pi_W(y))+\delta$, and for all $U\in\frakS_W - \{W\}$, either $U\nest V_i$ for some $i$, or $\dist_{U}(\pi_U(x),\pi_U(y)) \leq \delta$.

    \item \textbf{(Partial realization.)} \label{axiom:partial_realisation}  If $\{V_i\}$ is a finite collection of pairwise orthogonal elements of $\frakS$ and $p_i\in  \C{V_i}$ for each $i$, then there exists $x\in  X $ \emph{realising} the tuple $(p_i)$, meaning that, for every $i$ and every $W\in \frakS$:
		\begin{itemize}
			\item $\dist_{V_i}(\pi_{V_i}(x),p_i)\leq \delta$;
			\item if $V_i\propnest W$ or $W\trans V_i$ then $\dist_{W}(\pi_W(x),\rho^{V_i}_W)\leq \delta$.
		\end{itemize}

    \item\textbf{(Uniqueness.)} \label{axiom:uniqueness} There exists a function $\theta \colon [0,\infty) \to [0,\infty)$ so that for all $r \geq 0$, if $x,y\in X $ and $\dist_X(x,y)\geq\theta(r)$, then there exists $W\in\frakS$ such that $\dist_W(\pi_W(x),\pi_W(y))\geq r$. 
\end{enumerate}
	
    We use $\frakS$ to denote the HHS structure.  We call an element $U\in \frakS$ a \emph{domain}, the associated space $\C{U}$ its \emph{coordinate space}, and call the maps $\rho_W^V$ the \emph{relative projections} from $V$ to $W$. The quantity $\delta$ is called a \emph{hierarchy constant} for $\frakS$. We often suppress reference to the projection maps, so for every $x,y\in X $ and $U\in \frakS$ we write $\dist_U(x,y)$ to mean $\dist_U(\pi_U(x), \pi_U(y))$.
\end{defn}

\begin{defn}[Hierarchically hyperbolic group]\label{defn:HHG}
A finitely generated group $G$ is a \emph{hierarchically hyperbolic group} (HHG) if the following hold.
    \begin{enumerate}[label=(\roman*{})]
        \item\label{HHG_structure} $G$ acts metrically properly and coboundedly on a space $ X $ admitting a HHS structure $\frakS$.
        \item\label{HHG_action} There is a $\nest$--, $\perp$--, and $\trans$--preserving action of $G$ on $\frakS$ by bijections such that $\frakS$ contains finitely many $G$--orbits.
        \item\label{HHG_isometries} For each $W\in \frakS$ and $g\in G$, there exists an isometry $g_W\colon \C{W} \to \C{gW}$ satisfying the following for all $V,W\in\frakS$ and $g,h\in G$.
            \begin{itemize}
                \item The maps $(gh)_W\colon \C{W} \to \C{ghW}$ and $g_{hW}\circ h_W\colon \C{W}\to \C{hW}$ coincide.
                \item For each $x\in X$, $g_W(\pi_W(x))=\pi_{gW}(g\cdot x)$ in $\C{gW}$.
                \item If $V\trans W$ or $V\propnest W$, then $g_W(\rho^V_W)=\rho^{gV}_{gW}$ in $\C{gW}$.
            \end{itemize}
    \end{enumerate}
     We often drop the indices and denote each $g_W$ simply by $g$. When the underlying HHS is not relevant, we denote a HHG by $(G,\frakS)$.
\end{defn}

\begin{rem}[Moral compass]\label{rem:moral_compass}
    When reading the definitions above, one should keep in mind the motivating example of a HHG, which is the mapping class group of an orientable surface $S$, possibly with boundary and punctures. In this context, $X$ is the marking complex from \cite[Section 2.5]{masurminsky2}; the elements of $\frakS$ are isotopy classes of subsurfaces, with nesting given by inclusion and orthogonality corresponding to disjointness (both up to isotopy); finally, the coordinate space associated to a subsurface is the corresponding curve graph, onto which $X$ maps via the subsurface projection. The various axioms from \Cref{defn:HHS} abstract properties of curve graphs and markings from \cite{masurminsky1,masurminsky2}. 
\end{rem}

\begin{rem}\label{rem:fi_and_finite_ext_are_HHG}
    It follows from \Cref{defn:HHG} that a finite-index subgroup of a HHG, or an extension of a HHG by a finite kernel, is itself a HHG via the action on the same space. This justifies \Cref{lem:HHGpropr_fi-sgr_is_HHG} and \Cref{lem:HHGpropr_finquot_is_HHG} of \Cref{lem:HHGpropr}.
\end{rem}

\begin{rem}[HHS versus HHG]\label{rem:HHS_vs_HHG}
    If $f\colon X\to Y$ is a quasi-isometry and $(Y,\frakS)$ is a HHS, then composing the coordinate projections with $f$ gives a HHS structure on $X$. In particular, a group quasi-isometric to a HHG is a hierarchically hyperbolic \emph{space}. However, being a hierarchically hyperbolic \emph{group} is not even preserved by commensurability: for instance, the $(3,3,3)$-triangle group is a finite-index overgroup of $\Z^2$ but admits no HHG structure by \cite[Corollary 4.5]{PetytSpriano_Eyries}. This subtlety will also be reflected in the proof of \Cref{thmintro:not_HHG}.
\end{rem}

\begin{notation}\label{notation:HHG}
For the rest of the section, let $(X,\frakS)$ be a HHG structure for a group $G$, and let $x_0\in X$ be a basepoint. All definitions below are instances of more general notions, which are often stated under weaker assumptions; we refer to \cite{HHS_II,quasiflat} for further details.
\end{notation}

\noindent The following is the natural notion of quasiconvexity for subspaces of a HHS. 
\begin{defn}\label{defn:hqc} 
    A subset $Y\subseteq X$ of a HHS $(X,\frakS)$ is \emph{hierarchically quasiconvex} (hqc) if there exists a function $\kappa\colon \R_{\ge 0}\to \R_{\ge 0}$ such that:
    \begin{enumerate}
        \item For every $W\in \frakS$, $\pi_W(Y)$ is $\kappa(0)$-quasiconvex;
        \item\label{item:hqc_realiz} For every $r\ge 0$ and $x\in X$, if $\sup_{W\in \frakS}\dist_W(x,Y)\le r$, then there exists $y\in Y$ such that $\dist_X(x,y)\le \kappa(r)$.
    \end{enumerate} 
    We say that a subgroup $H\le G$ of a HHG is hierarchically quasiconvex if so is $H\cdot x_0\subseteq X$ for some (equivalently, any) basepoint $x_0$. 
\end{defn}

\noindent The next lemma follows by combining \cite[Lemma 4.9]{mangioni_amalgamation} and \cite[Lemma 1.20]{quasiflat}:
\begin{lem}\label{lem:intersection_hqc}
    The intersection of two hierarchically quasiconvex subgroups of a HHG is hierarchically quasiconvex.
\end{lem}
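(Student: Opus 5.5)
The plan is to deduce the statement from the two cited results, in the order the paper indicates. Let $H,K\le G$ be hierarchically quasiconvex subgroups and fix a basepoint $x_0\in X$. Then $H\cdot x_0$ and $K\cdot x_0$ are hierarchically quasiconvex subsets of $X$, with gauges $\kappa_H,\kappa_K$. The goal is to show that $(H\cap K)\cdot x_0$ is hierarchically quasiconvex. I would split this into a coarse-intersection statement and a subgroup-specific statement.

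\textbf{Step 1 (coarse intersection of hqc subsets).} Apply \cite[Lemma 1.20]{quasiflat}, which I expect to state the following. If $Y,Z\subseteq X$ are hqc, then for all sufficiently large $R$ (depending on the gauges and on $\delta$) the set $N_R(Y)\cap N_R(Z)$ is non-empty whenever $Y$ and $Z$ come close, and it is hqc with gauge depending only on $R$, $\kappa_Y$, $\kappa_Z$. The idea is that one uses the gate maps $\mathfrak g_Y,\mathfrak g_Z$. The set $\mathfrak g_Y(Z)$ is hqc, lies within bounded distance of $N_R(Y)\cap N_R(Z)$, and is coarsely equal to it. Quasiconvexity in each $\C{W}$ then follows from the fact that the projections of $\mathfrak g_Y(Z)$ are coarsely the closest-point projections of $\pi_W(Z)$ onto $\pi_W(Y)$. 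Realisation follows from the realisation property of $Y$ and $Z$ separately.

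\textbf{Step 2 (equivariant coarse intersection equals subgroup intersection).} Apply \cite[Lemma 4.9]{mangioni_amalgamation}, which I expect to be the group-theoretic input. For subgroups $H,K$ acting properly and coboundedly on $X$, the set $N_R(H x_0)\cap N_R(Kx_0)$ lies within finite Hausdorff distance of $(H\cap K)x_0$. This is the standard argument from hyperbolic groups. Suppose $h x_0$ and $k x_0$ are within $2R$ of each other. Then $h^{-1}k$ lies in the finite set $F$ of group elements moving $x_0$ by at most $2R$, where finiteness uses properness of the action. For each $f\in F$ with $hf=k$, fix one such pair $(h_f,k_f)$. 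Every other pair $(h,k)$ then satisfies $h h_f^{-1}=k k_f^{-1}$. Translating back, $h x_0$ is within $\max_f \dist(x_0,h_f x_0)$ of $(H\cap K)x_0$. The argument is formal, and uses only properness of the action and the fact that $G$ is a group.

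\textbf{Conclusion and main obstacle.} Hierarchical quasiconvexity is invariant under finite Hausdorff perturbation, with a modified gauge. This follows directly from the definition together with the coarse Lipschitz property of the projections $\pi_W$. Combining Steps 1 and 2 therefore shows that $(H\cap K)x_0$ is hqc, as required. I expect the only delicate point to be Step 1, namely that coarse intersections of hqc sets are genuinely hqc rather than merely quasiconvex in each coordinate space. This is exactly why the statement rests on the gate machinery of \cite{quasiflat}. Step 2 and the perturbation argument are routine.
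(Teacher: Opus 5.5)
Your proposal is correct and takes the same route as the paper. The paper's proof is only the sentence citing \cite[Lemma 4.9]{mangioni_amalgamation} together with \cite[Lemma 1.20]{quasiflat}, which combines, as you do, ``coarse intersections of hqc sets are hqc'' with ``the coarse intersection of two subgroup orbits is coarsely the orbit of the intersection,'' plus stability of hqc under finite Hausdorff distance. One wording fix for Step 2: it is $G$, not $H$ and $K$, that acts properly and coboundedly on $X$, and your argument only uses properness of the $G$-action.
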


\begin{defn}\label{defn:rank}
    The \emph{rank} of $(X,\frakS)$ is the maximum integer $\nu\in \N$ for which there exists $\nu$ pairwise orthogonal, unbounded domains. In the setting of \Cref{notation:HHG}, the rank also coincides with the maximal $n$ for which there exists a quasi-isometrically embedded copy of $\R^n$ inside $X$.
\end{defn}

\begin{defn}\label{ex:standard_flat} Let $\mathcal{U}=\{U_1,\ldots, U_k\}\subseteq \frakS$ be pairwise orthogonal, unbounded domains. A \emph{standard $k$-flat} supported on $\mathcal{U}$ is a subspace $F\subseteq X$ such that $\pi_{U_i}(F)$ is a bi-infinite quasiline for every $i=1,\ldots,k$, while $\pi_W(F)$ is uniformly bounded for every $W\in \frakS-\mathcal{U}$. A standard $k$-flat is hierarchically quasiconvex, the second item of \Cref{defn:hqc} being true by the partial realisation axiom.
\end{defn}

\noindent The following is \cite[Proposition 2.17]{HRSS}:

\begin{prop}\label{prop:hrss} Let $(X,\frakS)$ be a HHG structure for a group $G$, let $x_0$ be a basepoint, and let $A\le G$ be a virtually $\Z^k$ subgroup for some $k\ge 1$. Then there exists $l\ge k$ and an $A$-invariant collection of pairwise orthogonal domains $\mathcal U=\{U_1,\ldots,U_l\}$ such that:
\begin{enumerate}
    \item There exists $L\ge 0$ such that $\diam_{W}(A\cdot x_0)\le L$ for all $W\not \in \mathcal U$;
    \item For $1\le i\le l$, $\pi_{U_i}(A\cdot x_0)$ is a quasiline.
\end{enumerate}
\end{prop}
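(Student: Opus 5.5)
The plan is to reduce to a free abelian subgroup and then use the classification of infinite-order elements of a HHG. We assemble $\mathcal U$ from the domains on which generators act loxodromically, and then prove the two bullet points.

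First, let $A'\le A$ be a finite-index normal subgroup isomorphic to $\Z^k$, with basis $a_1,\ldots,a_k$. For every infinite-order $g\in G$ I would invoke the Durham--Hagen--Sisto classification: there is a finite, $g$-invariant set $\mathrm{Big}(g)$ of pairwise orthogonal domains on which $\langle g\rangle$ has unbounded orbits, and the orbit $\langle g\rangle x_0$ has uniformly bounded projection to every other domain. By the finite complexity and finitely-many-orbits axioms, a uniform power of $g$ fixes each element of $\mathrm{Big}(g)$ and acts loxodromically on the corresponding coordinate space. Passing to a finite-index subgroup of $A'$ (still $\cong\Z^k$), we may assume each $a_i$ fixes every domain in $\mathrm{Big}(a_i)$.

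Next, I would show that for commuting $g,h$, any $U\in \mathrm{Big}(g)$ and $V\in\mathrm{Big}(h)$ are equal or orthogonal. Since $gh=hg$, the element $g$ permutes $\mathrm{Big}(h)$, so after a power it fixes $V$, and symmetrically $h$ fixes $U$.
\begin{itemize}
\item If $U\trans V$, then $\rho^U_V$ is fixed by $h$, because $h$ fixes both $U$ and $V$ and relative projections are equivariant. This contradicts $h$ acting loxodromically on $\C V$.
\item If $U\propnest V$, the same argument with $\rho^U_V$ applies.
\item If $V\propnest U$, we use $\rho^V_U$ and the loxodromic action of $g$ on $\C U$.
\end{itemize}
Hence $\mathcal U:=\bigcup_{a\in A'}\mathrm{Big}(a)$ consists of pairwise orthogonal domains, so it has cardinality at most the rank. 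Each domain is fixed by $A'$ (up to a further finite-index passage), and $\mathcal U$ is $A$-invariant because $A$ normalises $A'$ and $\mathrm{Big}(aga^{-1})=a\,\mathrm{Big}(g)$. Taking Busemann quasimorphisms (\Cref{ex:busemann}) of the actions of $A'$ on the quasilines $\C{U_i}$, restricted to $A'$, gives homomorphisms $A'\to\R$. Each $\pi_{U_i}(A'x_0)$ is then a quasiline, since the orbit is coarsely the image of $\Z^k$ under a nonzero linear map. If $l=|\mathcal U|<k$, some nontrivial $a\in A'$ would lie in the kernel of all of them, so $\mathrm{Big}(a)=\emptyset$, contradicting that $a$ has infinite order. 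Thus $l\ge k$.

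The main obstacle is the uniform bound $\diam_W(A x_0)\le L$ for $W\notin\mathcal U$. Each single element has bounded orbits on $W$, but not uniformly in the element. My first attempt would use that $A'$ fixes each $U_i$ and therefore coarsely stabilises the product region $P_{\mathcal U}$. I would move $x_0$ into $P_{\mathcal U}$ by partial realisation, then analyse domains in three classes.
\begin{itemize}
\item For $W$ transverse to some $U_i$, or properly containing it, $\pi_W$ of the product region is within $\delta$ of $\rho^{U_i}_W$, which gives a uniform bound.
\item For $W$ orthogonal to all $U_i$, the projection of the orbit factors through the action on $\C W$ of the finitely generated abelian group $A'$. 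Every element of $A'$ acts elliptically on $\C W$, so I would argue via the finitely many $G$-orbits of domains and the Large Links axiom that the orbit is bounded; this is the delicate point.
\item For $W\propnest U_i$, I would apply BGI along the $A'$-quasiline in $\C{U_i}$, combined with equivariance, to show that large projections would produce an element whose big set contains $W$, which is impossible.
\end{itemize}
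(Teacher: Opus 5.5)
The paper does not prove this statement; it quotes it from \cite[Proposition 2.17]{HRSS}. On its own terms, several of your steps are fine: the reduction to a normal $A'\cong\Z^k$, the set $\mathcal U=\bigcup_{a\in A'}\mathrm{Big}(a)$, the equal-or-orthogonal argument for commuting elements, and $A$-invariance. Two small corrections to those steps:
\begin{itemize}
\item The loxodromic power is \cite[Theorem 3.1]{DHS_correction}, not a consequence of finite complexity.
\item The spaces $\C{U_i}$ are not quasilines in general. What makes $\pi_{U_i}(A'x_0)$ a quasiline is that each $b\in A'$ preserves the endpoints of a loxodromic $a_j$, so by the Morse lemma $b$ keeps a quasi-axis of $a_j$ within uniform Hausdorff distance of itself.
\end{itemize}

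\textbf{First gap: the bound $l\ge k$.} From $l<k$ you infer that some nontrivial $a\in A'$ lies in the common kernel of the Busemann homomorphisms $\beta_1,\ldots,\beta_l\colon A'\to\R$. This is false: $(m,n)\mapsto m+n\sqrt2$ is an injective homomorphism $\Z^2\to\R$.

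What $l<k$ does give is that $\{a\in A' : |\beta_i(a)|\le 1 \text{ for all } i\}$ is infinite. Indeed, the image of $(\beta_i)_i$ in $\R^l$ is either discrete of rank at most $l$, in which case the kernel is infinite, or it accumulates at $0$. These elements $a$ move $\pi_{U_i}(x_0)$ a uniformly bounded amount. To reach a contradiction you need item (1): then $\dist_W(x_0,ax_0)$ is uniformly bounded for every $W$, the uniqueness axiom bounds $\dist_X(x_0,ax_0)$, and properness of the action is violated. So the rank bound depends on (1).

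\textbf{Second gap: item (1) is not established.} You explicitly leave the case of $W$ orthogonal to all $U_i$ open, and the sketch has further problems. $A'$ need not fix such $W$, so the projection of the orbit does not factor through an action on $\C W$. For $W\propnest U_i$, large projections realised by \emph{different} elements of $A'$ do not produce a single element with unbounded orbit on $W$.

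The product-region detour is unnecessary. You already took from the DHS classification a constant $C(g)$ with $\diam\pi_W(\langle g\rangle x_0)\le C(g)$ for all $W\notin\mathrm{Big}(g)$, and this suffices by telescoping. Write $a=a_1^{n_1}\cdots a_k^{n_k}$ and $g_j=a_1^{n_1}\cdots a_j^{n_j}$, with $g_0=1$. By equivariance,
\[\dist_W(x_0,ax_0)\le\sum_{j=1}^k\dist_W(g_{j-1}x_0,g_{j-1}a_j^{n_j}x_0)=\sum_{j=1}^k\dist_{g_{j-1}^{-1}W}(x_0,a_j^{n_j}x_0)\le\sum_{j=1}^kC(a_j).\]
The last step uses that $\mathrm{Big}(a_j)\subseteq\mathcal U$ is $A'$-invariant, so $W\notin\mathcal U$ forces $g_{j-1}^{-1}W\notin\mathrm{Big}(a_j)$. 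Passing from $A'$ to $A$ adds only a bounded error, since there are finitely many cosets. With (1) in hand, the corrected rank argument above goes through.
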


\begin{cor}\label{prop:quasiflat_sgr_are_std}
    In the above setting, if $A$ is virtually $\Z^\nu$, where $\nu$ is the rank of $(X,\frakS)$, then $A\cdot x_0$ is a standard flat, and in particular it is hierarchically quasiconvex.
\end{cor}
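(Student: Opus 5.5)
Apply \Cref{prop:hrss} to $A$, which gives $l\ge \nu$ and an $A$-invariant family $\mathcal U=\{U_1,\ldots,U_l\}$ of pairwise orthogonal domains. Each $\pi_{U_i}(A\cdot x_0)$ is a quasiline, and $\diam_W(A\cdot x_0)\le L$ for every $W\notin\mathcal U$. Every $U_i$ is unbounded, since its coordinate space contains the quasiline $\pi_{U_i}(A\cdot x_0)$. Hence $\mathcal U$ consists of $l$ pairwise orthogonal unbounded domains, and \Cref{defn:rank} forces $l\le\nu$. Together with $l\ge\nu$ this gives $l=\nu$, although the exact value of $l$ plays no role below.

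We now compare with \Cref{ex:standard_flat}. A standard flat supported on $\mathcal U$ is a subspace whose projection to each $U_i$ is a bi-infinite quasiline and whose projections to all other domains are uniformly bounded. Both conditions for $F=A\cdot x_0$ are exactly items (1) and (2) of \Cref{prop:hrss}, with uniform bound $L$. So $A\cdot x_0$ is a standard $\nu$-flat. The remark after \Cref{ex:standard_flat} then says it is hierarchically quasiconvex. By \Cref{defn:hqc}, this means the subgroup $A$ is hierarchically quasiconvex.

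The only point that needs care is hierarchical quasiconvexity, which the paper attributes to partial realisation without detail. First, each $\pi_{U_i}(A\cdot x_0)$ is a quasiline in a hyperbolic space, hence uniformly quasiconvex, and the other projections are bounded; this gives item (1) of \Cref{defn:hqc}. For item (2), take $x$ with $\dist_W(x,A\cdot x_0)\le r$ for all $W$, and pick $a\cdot x_0$ whose $U_i$-projections are close to those of $x$. Such a point exists because $A$ is virtually $\Z^\nu$ and acts coboundedly on the product of the quasilines. Then $\dist_W(x,a x_0)$ is bounded in terms of $r$ and $L$ for every $W$, and the distance formula (uniqueness axiom) bounds $\dist_X(x,a x_0)$.

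This coboundedness is the main obstacle. I would handle it by observing that the map $A\to\prod_i\pi_{U_i}(A\cdot x_0)$ is coarsely surjective by construction, since each factor is the image of the orbit. I would then argue that the $\nu$ coordinates are independent: if they were not, the orbit would be a quasi-flat of dimension less than $\nu$, contradicting that $A$ is quasi-isometric to $\R^\nu$. Alternatively, one could simply cite \Cref{ex:standard_flat} directly, as the paper does.
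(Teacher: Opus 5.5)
Your reduction of item (2) of \Cref{defn:hqc} to a coboundedness statement is correct. However, that coboundedness is the whole content of the corollary, and you do not prove it. Your first two paragraphs only match the conditions listed in \Cref{ex:standard_flat}, and they go through verbatim in a situation where the conclusion fails. Take $\Z^2$ with its product structure: two orthogonal domains $U_1,U_2$ with $\C{U_i}=\R$ and coordinate projections, plus a bounded maximal domain, so $\nu=2$. Let $A=\langle(1,1)\rangle$. Then $\mathcal U=\{U_1,U_2\}$, $l=\nu$, and the diagonal projects onto each $\C{U_i}$ and boundedly elsewhere. Yet the points $(n,0)$ are at coordinate distance $0$ from the diagonal in every domain while lying at distance roughly $n$ from it, so the diagonal is not hierarchically quasiconvex. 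The partial-realisation remark in \Cref{ex:standard_flat} concerns a flat that coarsely fills the product of the quasilines. This is why the paper first places $A\cdot x_0$ inside a standard flat $F$ and then proves coarse equality with $F$. The same example refutes your claim that $A\to\prod_i\pi_{U_i}(A\cdot x_0)$ is coarsely surjective ``by construction'' because each factor is the image of the orbit. Your ``independence'' step, which is exactly where the hypothesis that $A$ is virtually $\Z^\nu$ must enter, is asserted rather than argued.

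The paper supplies this step by a dimension argument. Since $l=\nu$, $F$ is quasi-isometric to $\R^\nu$. The orbit $A\cdot x_0$ is an undistorted copy of $\R^\nu$ inside $F$ \cite[Corollary E]{HHP}. Finally, a quasi-isometric embedding $\R^\nu\to\R^\nu$ is coarsely surjective \cite[Theorem 3.8]{coarse-coHopf}.

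If you prefer to stay closer to your idea, you can make ``independence'' precise algebraically:
\begin{itemize}
\item Pass to a finite-index $A'\cong\Z^\nu$ fixing each $U_i$ and both ends of each quasiline $\pi_{U_i}(A'\cdot x_0)$.
\item The homogeneous Busemann quasimorphisms of \Cref{ex:busemann} are homomorphisms on the abelian group $A'$. They assemble into $\beta\colon A'\to\R^\nu$, which coarsely records the positions of the points $\pi_{U_i}(a\cdot x_0)$ along the quasilines.
\item Suppose $\beta$ had non-trivial kernel or non-discrete image. Then infinitely many $a\in A'$ would have $\dist_W(x_0,a x_0)$ uniformly bounded for every $W$, using the bound $L$ off $\mathcal U$. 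By the uniqueness axiom $\dist_X(x_0,a x_0)$ would then be uniformly bounded, contradicting properness.
\item So $\beta(A')$ is a discrete subgroup of rank $\nu$ in $\R^\nu$, hence a cocompact lattice. This is exactly the coboundedness you need.
\end{itemize}
Either way, the argument uses that the rank of $A$ equals $l$, contrary to your remark that the value of $l$ plays no role.
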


\begin{proof}
    By \Cref{prop:hrss}, there exists $k\ge \nu$ such that $A\cdot x_0$ is contained in a standard $k$-flat $F$. Notice that we must have $k=\nu$, as the latter is defined as the maximal cardinality of a collection of pairwise orthogonal elements with unbounded coordinate spaces. Thus $A\cdot x_0$ and $F$ are both $\nu$-flats, and the former is undistorted in the latter by e.g. \cite[Corollary E]{HHP}. Finally, a quasi-isometric embedding between Euclidean spaces of the same dimension is actually a quasi-isometry by e.g. \cite[Theorem 3.8]{coarse-coHopf}, proving that $A\cdot x_0$ coarsely coincides with $F$ and is thus a standard flat. 
\end{proof}

\subsection{The counterexample}\label{sec:counterex}
We are finally ready to prove \Cref{thmintro:not_HHG}, which we restate in more details. Let $\Sigma=S_1^1$ be a torus with one open disk removed. Let $K$ be the $2$-complex obtained by gluing three copies $\Sigma_1$, $\Sigma_2$, $\Sigma_3$ along the boundary circle $C$, as in \Cref{fig:three_tori}, and let $G=\pi_1(K)$. The Seifert-van Kampen Theorem yields a splitting $\mathcal G$ of $G$ whose vertex groups are each $G_i\coloneq \pi_1(\Sigma_i)$ and $G_C\coloneq \pi_1(C)$, and edge groups identify $G_C$ with the boundary subgroup of each $G_i$. 

\begin{lem}
    With the above notation, $G$ is a one-ended, torsion-free hyperbolic group; furthermore, the decomposition $\mathcal G$ satisfies all properties from \Cref{thm:JSJ}. 
\end{lem}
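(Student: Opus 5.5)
We split the statement into the group-theoretic properties of $G$ and the verification of the JSJ properties, deducing as much as possible from the geometry of $K$.

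\emph{Hyperbolicity and torsion-freeness.} Each $\Sigma_i$ admits a hyperbolic metric with totally geodesic boundary of a fixed length $\ell$; we pick the same $\ell$ for all three and glue isometrically along $C$. The result is a compact locally CAT($-1$) piecewise-hyperbolic $2$-complex. Near a point of $C$, three half-planes meet along a geodesic, and the link is a theta graph whose circuits have length $2\pi$. So Gromov's link condition holds, and $K$ is locally CAT($-1$). Its universal cover is then CAT($-1$), and $G=\pi_1(K)$ acts on it geometrically, so $G$ is hyperbolic. It is torsion-free because it is the fundamental group of a finite aspherical complex, since the universal cover is contractible. Alternatively, $G$ is the fundamental group of a graph of groups with free vertex groups, and any finite subgroup would have to be elliptic, hence trivial.

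\emph{One-endedness.} By \Cref{thm:stallings} we must show that $G$ does not split over a finite subgroup, that is, over the trivial group. I would argue via the boundary: $\partial G$ is obtained by gluing copies of the boundaries of the punctured-torus pieces, which are Cantor-like sets in circles, along the limit points of the $C$-lifts, and is connected. A cleaner route is the standard criterion that a graph of groups with one-ended-relative vertex groups is one-ended. Each $G_i\cong F_2$ has no free splitting relative to the boundary word $[a,b]$, since that word is not primitive and Whitehead's criterion applies: its Whitehead graph is a cycle with no cut vertex. Every edge group is infinite. Hence $G$ admits no free splitting, because a free splitting would induce a free splitting of some $G_i$ relative to its incident edge group.

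\emph{JSJ properties.} Take $V_1=\{C\}$ and $V_2=\{\Sigma_1,\Sigma_2,\Sigma_3\}$, with $V_3$ empty. The group $G_C\cong\Z$ lies in $\mathcal Z$. It is maximal virtually cyclic because $[a,b]$ is not a proper power in each $G_i$, and an element whose power lies in $G_C$ must fix the $C$-vertex of the Bass--Serre tree. Each $G_i=\pi_1(\Sigma_i)$ is a bounded Fuchsian group of conical type, with $K_v$ trivial, the compact orbifold being $\Sigma_i$ itself. The incident edge group is exactly its boundary subgroup, and it is orientation-preserving. Every edge joins $C$ to some $\Sigma_i$. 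Canonicity is the main obstacle. I would show that $\mathcal G$ is the tree of cylinders of a $\mathcal Z$-JSJ decomposition by invoking \cite[Theorem 6.5]{Guirardel_Levitt}: the QH vertices are maximal hanging, and the only cylinder is the commensurability class of $G_C$. The Bass--Serre tree is therefore already its own tree of cylinders, so it is canonical by \cite[Corollary 7.4]{Guirardel_Levitt}. Concretely, one checks that any $\mathcal Z$-splitting of $G$ makes $G_C$ elliptic. This holds because $G_C$ is the unique conjugacy class of $\mathcal Z$-subgroups along which the three surface pieces meet, since a curve crossing $C$ would intersect some $\Sigma_i$'s boundary essentially.
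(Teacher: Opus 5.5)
\textbf{One-endedness.} This step has a genuine gap. Your Whitehead computation correctly shows that each $G_i$ is freely indecomposable relative to $\langle c\rangle$, where $c$ generates $G_C$. But the step ``a free splitting of $G$ would induce a free splitting of some $G_i$ relative to its incident edge group'' assumes that $c$ is elliptic in the free splitting, and nothing in your argument rules out $c$ acting hyperbolically. This is not a technicality. The same reasoning applied to a single one-holed torus, written as $G_1*_{\langle c\rangle}G_C$, would show that $F_2$ is one-ended; yet in $F_2=\langle a\rangle*\langle b\rangle$ the commutator $c=[a,b]$ is hyperbolic. So the criterion you invoke is false as stated when edge groups are $\Z$ (a $\Z$ vertex is trivially one-ended relative to itself).

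What is missing is a global input that uses at least two pieces at once. The paper gets it from the fact that $\pi_1(\Sigma_i\cup\Sigma_j)$ is a closed genus-two surface group. Such a group is one-ended, so it fixes a point $p_{ij}$ in any $G$-tree with trivial edge stabilisers. Then $G_i$ fixes $[p_{ij},p_{ik}]$, and $G$ fixes the median of $p_{12},p_{13},p_{23}$; no Whitehead graphs are needed. Alternatively, this input makes $c$ elliptic, and then your relative indecomposability finishes the argument, together with the fact that $c\neq 1$ has a unique fixed point. Your boundary route is only an assertion: each piece alone contributes a Cantor set to $\partial G$, so connectedness of $\partial G$ is exactly what would have to be proven.

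\textbf{Canonicity.} Your observation that the Bass--Serre tree is its own tree of cylinders is correct. However, getting canonicity from Guirardel--Levitt requires the tree to lie in the $\Aut{G}$-invariant $\mathcal Z$-JSJ deformation space. That means you must prove two things:
\begin{itemize}
\item $G_C$ is elliptic in every $\mathcal Z$-splitting of $G$;
\item your tree dominates every universally elliptic $\mathcal Z$-tree.
\end{itemize}
Your sentence about curves crossing $C$ is a heuristic about curves in $K$, not an argument about arbitrary $G$-actions on trees with cyclic edge stabilisers, and domination is not addressed at all.

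The paper avoids JSJ theory here. By Lafont's rigidity theorem \cite[Theorem 1.2]{Lafont}, every automorphism of $G$ is induced by a homeomorphism of $K$. Such a homeomorphism preserves the branching curve $C$, hence permutes the conjugacy classes of vertex groups. Adjacency in the tree is recovered from inclusions of conjugates of $G_C$ into conjugates of the $G_i$, so this gives the required $\Aut{G}$-action.

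The rest of your proposal is fine. The locally CAT($-1$) metric, whose links along $C$ are theta graphs of girth $2\pi$, is a valid alternative to the combination theorem the paper cites. Torsion-freeness via asphericity and your verification of the vertex types are also correct.
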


\begin{proof} A finite subgroup of $G$ must fix a point on the Bass-Serre tree of $\mathcal G$; hence $G$ is torsion-free, as so are the vertex groups $G_i\cong F_2$ and $G_C\cong \Z$. Furthermore, $G$ is hyperbolic as it satisfies the requirements of \cite[Corollary 2]{amalgam_of_hyp}. By Stallings' theorem \Cref{thm:stallings}, to prove $G$ is one-ended we have to show that, whenever $G$ acts on a tree $T$ with trivial edge stabilisers, there is a global fixed point. To see this, for $1\le i<j\le 3$ let $S_{ij}=\Sigma_i\cup \Sigma_j$, so that $\pi_1(S_{ij})$ is a surface subgroup of $G$. Each $\pi_1(S_{ij})$ is one-ended, so it fixes a point $p_{ij}$ in every $G$-action on a tree with trivial edge stabilisers. Then $G_i=\pi_1(S_{ij})\cap \pi_1(S_{ik})$ fixes the geodesic $[p_{ij},p_{ik}]$; in turn, $G$ fixes the intersection of these three geodesics, which is non-empty since $T$ is a tree.

For the ``furthermore" part, the only non-trivial fact to check is that $\mathcal G$ is canonical. This is because, by \cite[Theorem 1.2]{Lafont}, every automorphism of $G$ is induced by a homeomorphism of $K$, and must therefore preserve the conjugacy classes of vertex groups. 
\end{proof}

\noindent By the above Lemma, $G$ fits the framework of \Cref{thm:Out(Gone_end_hyp)_is_HHG}: indeed, the quadratically hanging subgroups of $\mathcal G$ are precisely the fundamental groups of the $\Sigma_i$, which are orientable surface groups. Hence $\Out{G}$ is virtually a HHG. However:

\begin{thm}\label{thm:threesurfaces}
    With the above notation, $\Out{G}$ is not a HHG.
\end{thm}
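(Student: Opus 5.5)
Assume for contradiction that $\Out{G}$ is a HHG with structure $(X,\frakS)$. The plan has four steps.

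First, I will identify the relevant subgroups explicitly. By \cite[Theorem 1.2]{Lafont}, outer automorphisms are realised by homeomorphisms of $K$. The finite-index subgroup $\TOut{G}$ fits in the central extension
\[1\to \Z^2\to \TOut{G}\to \prod_{i=1}^3\MCG{S_{1,1}}\to 1,\]
whose kernel $Z=\langle T_1,T_2,T_3\rangle/\langle T_1T_2T_3\rangle\cong\Z^2$ is generated by the boundary twists. Each $\MCG{S_{1,1}}$ is virtually free, so it has rank one. Hence $\TOut{G}$ is quasi-isometric to $\Z^2\times\prod_{i=1}^3\MCG{S_{1,1}}$ by \Cref{thm:gersten}, and this space contains quasiflats of dimension $5$. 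I will argue that $\nu=5$ is the rank of the structure, using the characterisation of rank in \Cref{defn:rank} via quasi-isometrically embedded $\R^n$. I will check that a quasi-isometric copy of $\R^6$ cannot occur; the natural bound is the sum of ranks, and I will treat this as standard.

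Second, I will build the key Abelian subgroup. Take a pseudo-Anosov-type element $\phi$ of infinite order in $\MCG{S_{1,1}}$, for instance the image of $\begin{pmatrix}2&1\\1&1\end{pmatrix}$. Let $\phi_i$ be a lift in $\MCG{\Sigma_i}$ acting on $\Sigma_i$ only. Choose these lifts compatibly, conjugating via the homeomorphism $\sigma$ of $K$ that cyclically permutes the three tori, and extend them to homeomorphisms of $K$. The group $A=\langle \phi_1,\phi_2,\phi_3, T_1,T_2\rangle$ is then free Abelian of rank $5$ in $\Out{G}$, and it is normalised by $\sigma$ and by a transposition $\tau$ swapping two tori. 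Similarly $Z$ is normalised by the permutation group $S_3$, and $Z\rtimes S_3$ is the $(3,3,3)$ triangle group $\Delta$. Indeed, $S_3$ acts on $Z\cong\Z^3/\langle(1,1,1)\rangle$ as on the $A_2$ root lattice, and the extension splits because the permutations lift to honest homeomorphisms of $K$ (this splitting needs a small check).

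Third, I will show that $\Delta$ is hierarchically quasiconvex. By \Cref{prop:quasiflat_sgr_are_std}, the orbit $A\cdot x_0$ is a standard $5$-flat supported on some $A$-invariant orthogonal family $\mathcal U$, and it is hqc. Let $A'=\langle\phi_1,\phi_2,\phi_3\rangle\times\langle T_1,T_2\rangle$ be the analogous group built from a different pseudo-Anosov $\phi'$ that is independent from $\phi$. Then $A\cap A'=Z$, so $Z$ is hqc by \Cref{lem:intersection_hqc}. If this intersection argument fails, I would instead identify $Z\cdot x_0$ directly as the coordinate projection onto the two domains on which the twists act loxodromically, and argue that the $\phi_i$ account for the other three domains. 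Since $Z$ has finite index in $\Delta$, the orbit $\Delta\cdot x_0$ is at finite Hausdorff distance from $Z\cdot x_0$. Therefore $\Delta$ is hqc, and $\Delta$ permutes the finitely many domains $\mathcal U_Z\subseteq\mathcal U$ on which $Z$ is unbounded.

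Fourth, I will follow Petyt–Spriano \cite[Corollary 4.5]{PetytSpriano_Eyries}. A finite-index subgroup of $\Delta$ fixes each domain in $\mathcal U_Z$, which consists of two orthogonal domains, so it acts on each of the two quasilines $\pi_U(Z\cdot x_0)$. The Busemann quasimorphisms of \Cref{ex:busemann} then give a homogeneous quasimorphism $Z\to\R^2$. This map is a quasi-isometry on $Z$, and $\Delta$ permutes its coordinates and their signs. Hence the induced linear action of the rotation $\sigma$ of order $3$ on $Z\otimes\R$ must preserve a product decomposition $\R\times\R$ up to sign, that is, it must lie in the finite group generated by coordinate swaps and sign changes. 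An order-$3$ rotation is not of that form, which gives the contradiction.

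The main obstacle is the third step: proving that $Z$ itself, and not just the maximal flat $A$, is hqc and is supported on exactly two orthogonal domains permuted by $\sigma$. This requires a careful choice of the second flat $A'$ and an analysis of how the intersection of the orbits behaves. The last step mirrors \cite{PetytSpriano_Eyries} once the equivariance is in place.
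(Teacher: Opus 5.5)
Your outline follows the paper's proof step for step: bounded extension for $\TOut{G}$, rank $5$, two $\Z^5$ preimages that are standard flats by \Cref{prop:quasiflat_sgr_are_std}, $Z=A\cap A'$ hqc by \Cref{lem:intersection_hqc}, then an order-$3$ element normalising $Z$. The gap is the step you yourself flag as the main obstacle. In your fourth step you assert, without proof, that $\mathcal U_Z$ consists of exactly two domains.

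This is not cosmetic. If $\mathcal U_Z$ had three elements, $\sigma$ could permute them cyclically. A cyclic permutation of coordinates of $\R^3$, restricted to the plane $x+y+z=0$, is precisely an order-$3$ rotation, so your signed-permutation argument would give no contradiction, and neither would the paper's endpoint argument. Your fallback does not close the gap either. A priori the image of $Z$ under the Busemann map $A\to\R^5$ is a $2$-plane that need not be a coordinate plane, so nothing forces the twists to be unbounded on only two of the five domains of $A$.

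The missing idea is short and uses what you already have; it has nothing to do with choosing $A'$ carefully, since the intersection lemma already gives that $Z$ is hqc at the level of subgroups.
By \Cref{prop:hrss}, $Z\cdot x_0$ projects to quasilines on $l\ge 2$ pairwise orthogonal domains and to uniformly bounded sets elsewhere. Suppose $l\ge 3$. Realisation then produces points of $X$ with arbitrary coordinates on those $l$ quasilines and coordinates near $\pi_W(x_0)$ in every other domain. These points are uniformly close to $Z\cdot x_0$ in every $\C{W}$, hence uniformly close to $Z\cdot x_0$ in $X$ by item~(2) of \Cref{defn:hqc}. So $Z\cdot x_0$ would coarsely contain a quasi-isometrically embedded $\R^l$, which is impossible for $Z\cong\Z^2$. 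Hence $l=2$, and $\sigma$, having order $3$, fixes both domains and both endpoints.

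This is how the paper concludes. It takes $U$ on which $T_1^2$ is loxodromic and observes that $T_1^2,T_2^2,T_3^2$ all translate towards the same endpoint, contradicting $T_1^2T_2^2T_3^2=1$. That route also spares you the claim that the Busemann map is a quasi-isometric embedding on $Z$.

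Two smaller points:
\begin{itemize}
\item Before invoking \Cref{thm:gersten} you must say why the extension is bounded. The paper gets this from \Cref{lem:HHGpropr}.\eqref{lem:HHGpropr_ext_is_HHG} under the HHG assumption; \Cref{thm:M} would also do.
\item You never need $\Delta$ to be hqc, only that $\sigma$ normalises $Z$.
\end{itemize}
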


\noindent The core of the proof below is, morally, that any HHG structure for $\Out{G}$ would induce a HHG structure for the subgroup generated by Dehn twists and the elements permuting the $\Sigma_i$'s; this subgroup is isomorphic to the $(3,3,3)$-triangle group, which admits no HHG structure as mentioned in \Cref{rem:HHS_vs_HHG}. We will actually need a weaker fact, as we now explain.

\begin{proof}
Towards a contradiction let $(X,\frakS)$ be a HHG structure for $\Out{G}$. For each $i$ let $T_i\in \Out{G}$ be (the outer automorphism class of) the Dehn twist of $G_i$ around $G_C$, which span a subgroup $\mathcal T$ isomorphic to $\Z^2$ as $T_1T_2T_3=1$. We first prove the following:
\begin{claim}
$\mathcal T$ is hierarchically quasiconvex.
\end{claim}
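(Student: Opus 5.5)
The natural route is \Cref{prop:quasiflat_sgr_are_std}: a virtually abelian subgroup whose rank equals the rank of the HHG structure is a standard flat, and hence hierarchically quasiconvex. Since $\mathcal T\cong\Z^2$, the difficulty is that $\Out{G}$ has larger rank. By \Cref{thm:M} (or directly \Cref{thm:main_levitt}), a finite-index subgroup of $\Out{G}$ is a bounded central extension of $\prod_{i=1}^3\MCG{S_{1,1}}$ with kernel $\mathcal T$. So $\Out{G}$ is quasi-isometric to $\Z^2\times\prod_i\MCG{S_{1,1}}$ by \Cref{thm:gersten}, and each $\MCG{S_{1,1}}$ is virtually free. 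The maximal quasi-flats therefore have dimension $2+3=5$, and by \Cref{defn:rank} the rank $\nu$ of $(X,\frakS)$ is $5$.

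The plan is to enlarge $\mathcal T$ to a rank-$5$ abelian subgroup and then recover $\mathcal T$ by intersection.

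\textbf{Step 1: build two $\Z^5$ subgroups.} For $i=1,2,3$, pick pseudo-Anosov (infinite-order) mapping classes $a_i, b_i\in\MCG{\Sigma_i}$ supported on $\Sigma_i$, with $\langle a_i\rangle\cap\langle b_i\rangle$ trivial. Extending them by the identity on the other tori gives elements of $\Out{G}$ commuting with each other across different $i$, and commuting with all $T_j$ because twists are central in the finite-index subgroup. Define
\[A=\langle \mathcal T, a_1,a_2,a_3\rangle, \qquad B=\langle \mathcal T, b_1,b_2,b_3\rangle.\]
Each is virtually $\Z^5$. Concretely, $A$ is a central extension of $\Z^3$ by $\Z^2$, and it is abelian since the $a_i$ commute, being supported on disjoint subsurfaces.

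\textbf{Step 2: quasiconvexity of $A$ and $B$.} Since $\nu=5$, \Cref{prop:quasiflat_sgr_are_std} shows that $A$ and $B$ are hierarchically quasiconvex.

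\textbf{Step 3: intersect.} By \Cref{lem:intersection_hqc}, $A\cap B$ is hierarchically quasiconvex. I would show that $A\cap B$ is commensurable with $\mathcal T$, i.e.\ that it contains $\mathcal T$ with finite index. Under $\rho_1$, $A$ maps to $\prod\langle a_i\rangle$ and $B$ to $\prod\langle b_i\rangle$, which intersect trivially. Hence $A\cap B\le\ker\rho_1=\mathcal T$, so $A\cap B=\mathcal T$ on the nose, which proves the claim. If I only obtained a finite-index subgroup of $\mathcal T$ instead, that would suffice anyway, since a finite Hausdorff-distance modification preserves hierarchical quasiconvexity.

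The main obstacle is justifying $\nu=5$ rigorously. The rank is defined via orthogonal domains, and the paper's identification with the maximal dimension of quasi-flats is only asserted in \Cref{defn:rank}. For $\nu\ge5$ it is enough that $A$ is undistorted, which holds because of the quasi-isometry to $\Z^2\times\prod\MCG{S_{1,1}}$. For $\nu\le5$ I need that $\Z^2\times(\text{virtually free})^3$ contains no quasi-isometric copy of $\R^6$, which follows from asymptotic dimension or from the quasi-flat dimension bound for products of trees and $\R^2$. I would also double-check that the chosen $a_i$ genuinely give undistorted copies of $\Z$, i.e.\ that they have infinite order in $\MCG{S_{1,1}}\cong SL_2(\Z)$ (up to finite index); this is fine.
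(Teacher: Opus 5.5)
Your proposal is correct and is essentially the paper's argument. You enlarge $\mathcal T$ to two copies of $\Z^5$ (preimages of products of cyclic subgroups of the $\MCG{\Sigma_i'}$), get their hierarchical quasiconvexity from the rank being $5$ via \Cref{prop:quasiflat_sgr_are_std}, and recover $\mathcal T$ as their intersection via \Cref{lem:intersection_hqc}; the only cosmetic difference is that the paper gets boundedness of the extension from the assumed HHG structure and \Cref{lem:HHGpropr}, rather than from \Cref{thm:M}.

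One small fix is needed: the condition $\langle a_i\rangle\cap\langle b_i\rangle=\{1\}$ must be imposed on the images in $\MCG{\Sigma_i'}$, as the paper does by choosing $Z_i,Z_i'$ there. For instance, $b_i=a_iT_i$ satisfies your condition in $\MCG{\Sigma_i}$, yet it gives $A=B$.
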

\begin{claimproof} Let $\TOut{G}$ be the finite-index subgroup of $\Out{G}$ mapping each torus to itself in an orientation-preserving way. This subgroup fits inside a central extension 
\[1\to \mathcal T\to \TOut{G}\to \prod_{i=1}^3\MCG{\Sigma_i'}\to 1,\]
where $\Sigma_i'=S_{1,1}$ is obtained from $\Sigma_i$ by gluing a once-punctured disk to the boundary curve (see the introduction of \cite{Levitt_authyp}; one can also check that $\TOut{G}$ coincides with the subgroup $\Outtwo{G}$ from \Cref{thm:main_levitt}). Since $\TOut{G}$ has finite index in $\Out{G}$, $(X,\frakS)$ is also a HHG structure for $\TOut{G}$. In particular, the above central extension is bounded by \Cref{lem:HHGpropr}.\eqref{lem:HHGpropr_ext_is_HHG}, and in turn \Cref{thm:gersten} gives that $\TOut{G}$ is quasi-isometric to the direct product $\mathcal T\times \prod_{i=1}^3\MCG{\Sigma_i'}$.

For every $i=1,\ldots, 3$ let $Z_i$ be an infinite cyclic subgroup of $\MCG{\Sigma_i'}$, and let $F\cong \Z^5$ be the preimage of $\langle Z_1,Z_2,Z_3\rangle$ inside $\TOut{G}$. Notice that the rank of $\TOut{G}$, defined as in \Cref{defn:rank}, is exactly $5$ because $\TOut{G}$ is quasi-isometric to a product of $\Z^2$ and three free groups; hence $F\cong \Z^5$ is hierarchically quasiconvex by \Cref{prop:quasiflat_sgr_are_std}.

Since $\MCG{\Sigma_i'}$ is virtually free for every $i$, we can find infinite cyclic subgroups $Z_i'\le \MCG{\Sigma_i'}$ which intersect the corresponding $Z_i$ trivially. Let $F'$ be the preimage of $\langle Z_1',Z_2',Z_3'\rangle$ inside $\TOut{G}$. Since $F'$ is hierarchically quasiconvex as well, by \Cref{lem:intersection_hqc} the intersection $\mathcal T=F\cap F'$ is hierarchically quasiconvex (in the HHG structure for $\TOut{G}$, which is the same as that for $\Out{G}$).
\end{claimproof}

By \Cref{prop:hrss}, $\mathcal T$ projects to quasilines on a collection $\mathcal L$ of at least two pairwise orthogonal domains, and to uniformly bounded sets elsewhere. Since $\mathcal T\cong \Z^2$ is hierarchically quasiconvex, $\mathcal L$ must contain exactly two elements, or it would coarsely coincide with a standard flat of higher rank. 

Now let $\sigma$ be the element of $\Out{G}$ induced by cyclically permuting the three surfaces. Notice that $\sigma T_i\sigma^{-1}=T_{i+1}$, where indices are taken modulo $3$; in particular $\sigma$ normalises $\mathcal T$, and therefore acts on $\mathcal L$. Since $\sigma$ has order $3$ while $|\mathcal L|=2$, $\sigma$ fixes each $U\in \mathcal L$; for the same reason, the action of $\sigma$ on $\partial \C U$ is trivial for every $U\in \mathcal L$, since $\C U$ is a quasiline and therefore only has two ideal endpoints. Now let $U\in \mathcal L$ be such that $T_1^2$ acts loxodromically on $U$, which exists by \cite[Theorem 3.1]{DHS_correction}, and let $\varepsilon\in \partial \C{U}$ be the attracting endpoint for this action. But then $T_2^2=\sigma T_1^2 \sigma^{-1}$ and $T_3^2=\sigma^2 T_1^2 \sigma^{-2}$ also act on $U=\sigma(U)=\sigma^2(U)$ with $\varepsilon=\sigma(\varepsilon)=\sigma^2(\varepsilon)$ as the attracting endpoint, contradicting the fact that $T_1^2T_2^2T_3^{2}=1$.
\end{proof}

\bibliography{biblio}\bibliographystyle{alpha}

\end{document}